\newtheorem{theorem}{Theorem}[section]
\newtheorem{lemma}[theorem]{Lemma}
\newtheorem{proposition}[theorem]{Proposition}
\newtheorem{definition}[theorem]{Definition}
\newtheorem{remark}[theorem]{Remarks}
\newtheorem{rk&ex}[theorem]{Remarks \& Examples}
\newtheorem{corollary}[theorem]{Corollary}
\def\NN{{\mathbb N}}
\def\ZZ{{\mathbb Z}}
\def\Z{{\mathbb Z}}
\def\RR{{\mathbb R}}
\def\R{{\mathbb R}}
\def\e{\varepsilon}
\begin{document}
\title{Homogenization for nonlinear PDEs in general domains with oscillatory Neumann boundary data}
\author{Sunhi Choi \thanks{Department of Mathematics, U. of Arizona, Tucson, AZ. }  and Inwon C. Kim
\thanks{Department of Mathematics, UCLA, LA CA. Research supported by NSF DMS 0970072} }
\date{}
\maketitle
\begin{abstract}
In this article we investigate averaging properties of fully nonlinear PDEs in bounded domains with oscillatory Neumann boundary data. The oscillation is periodic and is present both in the operator and in the Neumann data. Our main result states that, when the domain does not have flat boundary parts and when the homogenized operator is rotation invariant, the solutions uniformly converge to the homogenized solution solving a Neumann boundary problem. Furthermore we show that the homogenized Neumann data is continuous with respect to the normal direction of the boundary.  Our result is the nonlinear version of the classical result in \cite{BLP} for divergence-form operators with co-normal boundary data. The main ingredients in our analysis are the estimate on the oscillation on the solutions in half-spaces (Theorem~\ref{thm:planar}), and the estimate on the mode of convergence of the solutions as the normal of the half-space varies over {\it irrational} directions (Theorem~\ref{continuity}).
\end{abstract}

\section{Introduction}

Let us consider a bounded domain $\Omega$ in $\R^n$ containing the closed unit ball $K=\{x:|x|\leq 1\}$ (see Figure 1). Let $g:\R^n\to [1,2] $ be a H\"{o}lder continuous function which is periodic with respect to the orthonomal basis $\{(e_1,...,e_n)\}$ of $\R^n$. More precisely, $g$ satisfies
$$
g(x+e_i)=g(x) \quad\hbox{ for } i=1,...,n \hbox{ and } \quad g\in C^{\beta}(\R^n) \hbox{ for some } 0<\beta<1.
$$

With $\Omega, K$ and $g$ as given above, we are interested in the limiting behavior of the following problem:
$$
\left\{\begin{array}{lll}
F(D^2 u^\e,\frac{x}{\e})=0 &\hbox{ in } \Omega-K,\\ \\
u^\e=1 &\hbox{ on } K,\\ \\
\frac{\partial}{\partial\nu} u^\e = g(\frac{x}{\e}) &\hbox{ on  } \partial\Omega.
\end{array}\right. \leqno(P_\e)
$$
 \begin{figure}
\center{\epsfig{file=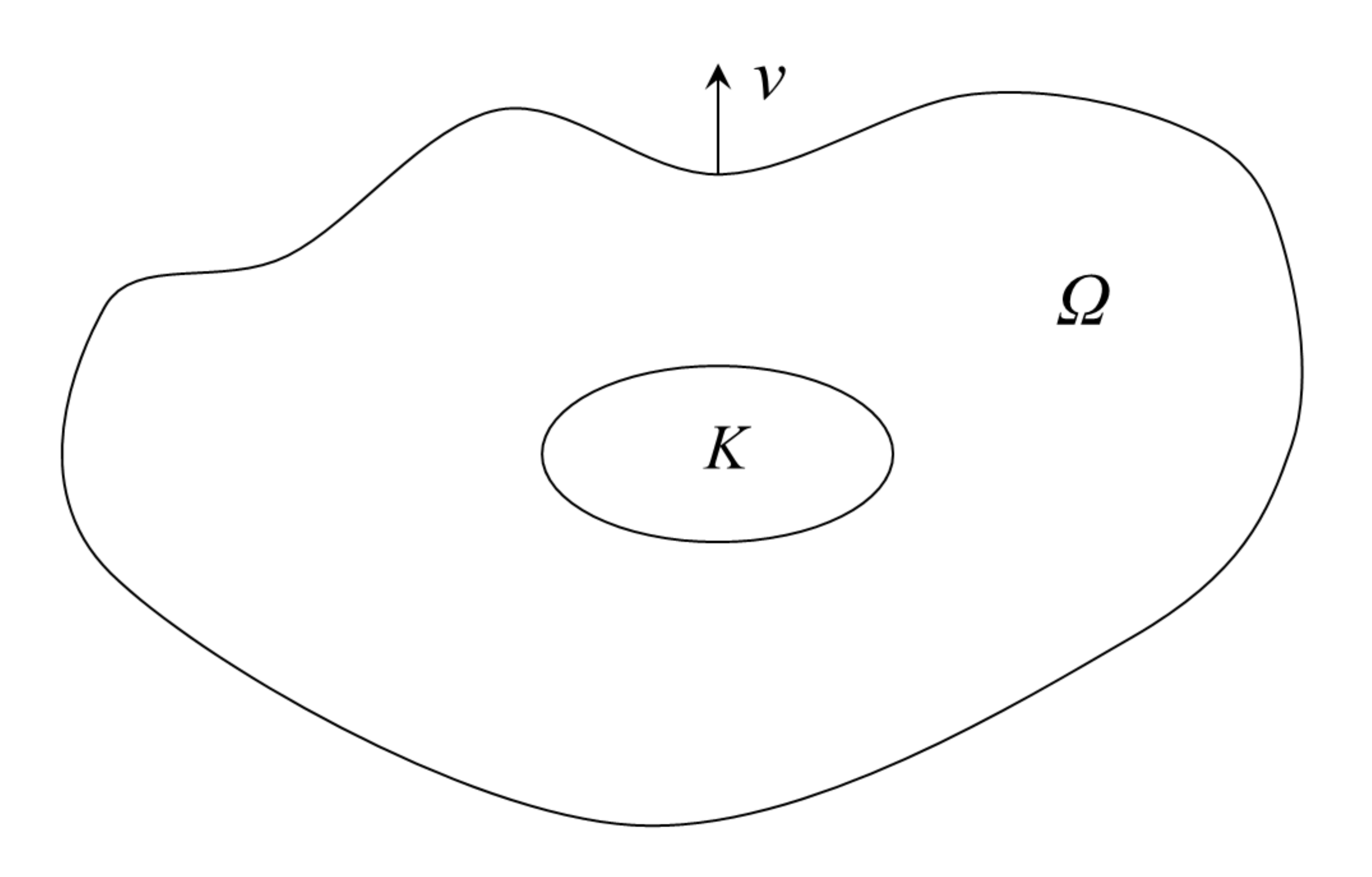,height=1.5in}} \center{Figure 1}
\end{figure}

Here $\nu=\nu_x$ is the outward normal vector at $x\in\partial\Omega$ and $F$ is a uniformly elliptic, fully nonlinear operator (see section 2 for the precise definitions and conditions on $F$). 
The set $K$ is introduced to avoid discussions of compatibility conditions on the Neumann boundary data. The operators under discussion include, for example, the divergence or non-divergence form operator

\begin{equation}\label{example}
-\Sigma_{i,j} a_{ij}(\frac{x}{\e}) \partial_{x_ix_j} u=0,
\end{equation}
where $a_{ij}(x)$ are H\"{o}lder continuous  and $\lambda I_{n\times n} \leq (a_{ij})(x)\leq \Lambda I_{n\times n}$ for positive constants $\lambda$ and $\Lambda$.

\medskip

Let us define
$$
\mathcal{S}^{n-1}=\{\nu\in\R^n:|\nu|=1\}\quad\hbox{ and }\quad\mathcal{M}^n:=\{ M: n\times n \hbox{ symmetric matrix }\}.
$$

Due to \cite{CS}, there exists a uniformly elliptic operator $\bar{F}: \mathcal{M}^n\to \R$ such that any solution of $F(D^2u^\e,\frac{x}{\e})=0$ in $\Omega$ with {\it fixed} boundary data on $\partial\Omega$ converges uniformly to the solution of $\bar{F}(D^2u)=0$ in $\Omega$ (see Theorem~\ref{original_convergence}). Therefore the question under investigation is whether the oscillatory boundary data changes the averaging behavior of $u^\e$. As we will discuss below, several difficulties arise in answering this question due to the nonlinear (or non-divergence form) nature of the problem as well as the geometry of the domain.

\medskip

 Let us state our main result.  First we introduce the following ``cell problem" for given $\nu\in\mathcal{S}^{n-1}$ and $\lambda\in\R$.
 
 $$
\left\{\begin{array}{lll}
F(D^2 u^\e,\frac{x}{\e})=0 &\hbox{ in } \{-1\leq x\cdot\nu -\lambda \leq 0\};\\ \\
u^\e=1 &\hbox{ on } \Gamma_D:= \{x\cdot\nu=\lambda-1\};\\ \\
\partial_\nu u^\e = g(\frac{x}{\e}) &\hbox{ on  }\Gamma_N:=\{x\cdot\nu = \lambda\}.
\end{array}\right.\leqno (P)_{\e,\nu,\lambda}
$$

\begin{theorem}\label{main} 
Let $\bar{F}:\mathcal{M}^n\to\R$ and, $(P)_\e$ and $(P)_{\e,\nu,\lambda, a}$ be as given above.
\begin{itemize}
\item[(a)] [Theorem 3.1] For given normal direction $\nu\in\mathcal{S}^{n-1}-\R\Z^n$ and any $\lambda\in\R$ there exists a unique homogenized neumann data $\mu(\nu)$ for solutions of $(P_{\e,\nu,\lambda})$. 
\item[(b)][Theorem 5.1] Suppose that $\bar{F}(M)$ only depends on the eigenvalues of $M\in\mathcal{M}^n$. Then there exists a continuous function $\bar{\mu}(\nu): \mathcal{S}^{n-1}\to \R$, given as the continuous extension of $\mu(\nu)$ over $\nu \in \mathcal{S}^{n-1} - \R\Z^n$,  such that the following holds:

Suppose $\Omega$ is a bounded domain in $\R^n$ such that $\partial\Omega$ is $C^2$ and  does not contain any flat part. Let $u^\e$ solve $(P)_\e$.  Then $u^\e$ converges locally uniformly to  $u$, which is the unique solution of the homogenized problem
$$
\left\{\begin{array}{lll}
\bar{F}(D^2 u)=0 &\hbox{ in } \Omega-K;\\ \\
u=1 &\hbox{ on } K;\\ \\
\partial_\nu u = \bar{\mu}(\nu) &\hbox{ on  } \partial\Omega.
\end{array}\right. \leqno(\bar{P})
$$
\end{itemize}
\end{theorem}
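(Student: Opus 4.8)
The plan is to combine the method of half-relaxed limits with a perturbed test function (``corrector'') argument, invoking the interior homogenization result of \cite{CS} (Theorem~\ref{original_convergence}) for the equation in $\Omega-K$, and the cell problems $(P)_{\e,\nu,\lambda}$ together with Theorems~\ref{thm:planar} and~\ref{continuity} for the oscillatory Neumann condition on $\partial\Omega$. First I would record $\e$-independent estimates for $u^\e$: comparison with fixed barriers gives $1\le u^\e\le C$ on $\bar\Omega\setminus K^\circ$ (using $F(0,\cdot)=0$, uniform ellipticity, and $1\le g\le 2$), interior H\"older continuity is Krylov--Safonov, and H\"older (indeed Lipschitz) continuity up to $\partial\Omega$ follows from boundary barriers since the Neumann data is bounded; moreover $u^\e\equiv 1$ on $K$. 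Hence $\{u^\e\}$ is precompact in $C(\bar\Omega\setminus K^\circ)$, and the half-relaxed limits
$$
\bar u(x):=\limsup_{\e\to 0,\ y\to x}u^\e(y),\qquad \underline u(x):=\liminf_{\e\to 0,\ y\to x}u^\e(y)
$$
are finite, with $\bar u$ upper semicontinuous, $\underline u$ lower semicontinuous, $\underline u\le\bar u$, and $\bar u=\underline u=1$ on $\partial K$. It then suffices to prove that $\bar u$ is a viscosity subsolution and $\underline u$ a viscosity supersolution of $(\bar P)$: the comparison principle for $(\bar P)$ --- valid because $\partial\Omega\in C^2$ and $\bar\mu$ is continuous --- gives $\bar u\le\underline u$, so $\bar u=\underline u=:u$ is the unique solution of $(\bar P)$ and $u^\e\to u$ locally uniformly.

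The interior inequalities $\bar F(D^2\bar u)\le 0$ and $\bar F(D^2\underline u)\ge 0$ in $\Omega-K$ are exactly the stability of viscosity solutions under homogenization of $F(D^2u^\e,x/\e)=0$, i.e.\ Theorem~\ref{original_convergence} applied on subdomains compactly contained in $\Omega-K$ (perturbed test functions using the interior approximate correctors of \cite{CS}). Together with $\bar u=\underline u=1$ on $\partial K$, this leaves only the Neumann condition on $\partial\Omega$.

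This is the crux. Let $\phi\in C^2$ touch $\bar u$ from above, strictly and locally, at $x_0\in\partial\Omega$; I must show $\bar F(D^2\phi(x_0))\le 0$ or $\partial_\nu\phi(x_0)\le\bar\mu(\nu_{x_0})$. Assume not, so $\bar F(D^2\phi(x_0))\ge\theta>0$ and $\partial_\nu\phi(x_0)\ge\bar\mu(\nu_{x_0})+3\delta$ for some $\theta,\delta>0$. Fix small $\rho>0$ and work in $D_\rho:=B_\rho(x_0)\cap(\Omega-K)=B_\rho(x_0)\cap\Omega$. Strict touching gives $c=c(\rho)>0$ with $\phi\ge\bar u+c$ on $\partial B_\rho(x_0)\cap\bar\Omega$, hence $\phi-c/2\ge u^\e$ there for $\e$ small. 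The goal is to build $w^\e$ on $D_\rho$ with: (i) $F(D^2w^\e,x/\e)\ge 0$ in $D_\rho$; (ii) $\partial_\nu w^\e\ge g(x/\e)$ on $B_\rho(x_0)\cap\partial\Omega$; (iii) $w^\e\ge u^\e$ on $\partial B_\rho(x_0)\cap\bar\Omega$; (iv) $w^\e\le\phi-c/4$ near $x_0$. Given such $w^\e$, the comparison principle for the mixed Dirichlet/Neumann problem for $F(\cdot,x/\e)=0$ yields $u^\e\le w^\e$ in $D_\rho$; evaluating along a sequence $x_\e\to x_0$ with $u^\e(x_\e)\to\bar u(x_0)$ gives $\bar u(x_0)\le\phi(x_0)-c/4<\phi(x_0)=\bar u(x_0)$, a contradiction. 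To construct $w^\e$ I would take $w^\e=$ (second-order Taylor polynomial of $\phi-c/2$ at $x_0$) $+$ (a boundary-layer corrector of order $o(1)$ built from the solution of a cell problem $(P)_{\e,\nu',\lambda'}$ with an irrational normal $\nu'$ close to $\nu_{x_0}$) $+$ (an interior corrector of order $\e^2$). The two strict inequalities supply exactly the slack needed: $\bar F(D^2\phi(x_0))\ge\theta$ makes the perturbed test function a genuine supersolution despite the $O(1)$ Hessian of the correctors (Evans' method and the approximate-corrector estimates of \cite{CS}), while $\partial_\nu\phi(x_0)\ge\bar\mu(\nu_{x_0})+3\delta$ together with $\mu(\nu')=\bar\mu(\nu')\to\bar\mu(\nu_{x_0})$ (continuity of $\bar\mu$, Theorem~\ref{continuity}) forces $\partial_\nu w^\e\ge g(x/\e)$ on the boundary once $\rho$ and $|\nu'-\nu_{x_0}|$ are small; and $w^\e\to\phi-c/2$ uniformly gives (iii)--(iv). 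The supersolution property of $\underline u$ is symmetric.

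The main obstacle is the construction of $w^\e$ when $\nu_{x_0}$ is a rational (resonant) direction, or lies near the rational set, since then the cell problem with normal $\nu_{x_0}$ itself need not homogenize to a single linear profile and ``$\mu(\nu_{x_0})$'' is meaningless. This is where the hypothesis that $\partial\Omega$ contains no flat part is essential: in $B_\rho(x_0)$ the boundary is a graph over its tangent plane with deviation $O(\rho^2)$, so it can be trapped between two parallel hyperplanes sharing a common irrational normal $\nu'$ with $|\nu'-\nu_{x_0}|=O(\rho)$, within a slab of width $O(\rho^2)$; one then runs the comparison against barriers built from the well-behaved cell problem for $\nu'$, using Theorem~\ref{thm:planar} to quantify the homogenization rate in the slab and Theorem~\ref{continuity} (hence the continuity of $\bar\mu$) to control $\bar\mu(\nu')-\bar\mu(\nu_{x_0})$. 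The delicate point is the ordering of scales: one needs $\e\ll\rho^{a}\ll\rho\ll 1$ for a suitable exponent $a$ so that the slab homogenization error, the $O(\rho^2)$ geometric error from the curved boundary, and the modulus of continuity of $\bar\mu$ at $\nu_{x_0}$ are all $\ll\delta$, and one must check the Neumann inequality on the true curved boundary rather than on the model hyperplane, and that $w^\e$ dominates $u^\e$ on $\partial B_\rho(x_0)$. Once the boundary condition is verified for both $\bar u$ and $\underline u$, the comparison principle for $(\bar P)$ closes the argument.
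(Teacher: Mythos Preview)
Your overall architecture matches the paper: half-relaxed limits, interior homogenization via \cite{CS}, contradiction at a boundary touching point, and closure by comparison for $(\bar P)$ once $\bar\mu$ is continuous. The interior part and the reduction to the Neumann condition are fine.

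The gap is in the boundary step, and it concerns precisely the two places you flag as ``delicate'' without resolving.

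First, your explicit barrier $w^\e=\text{(Taylor polynomial)}+\text{(boundary-layer corrector)}+\text{(interior corrector)}$ does not verify (ii). The boundary-layer corrector coming from $(P)_{\e,\nu',\lambda'}$ satisfies $\partial_{\nu'}(\cdot)=g(x/\e)$ on the flat hyperplane $\{x\cdot\nu'=\lambda'\}$, not on the curved $\partial\Omega$. Since the corrector oscillates at scale $\e$ and the geometric mismatch is of order $\rho^2$, the discrepancy in the Neumann derivative on $\partial\Omega$ is not obviously $o(1)$. The paper does not build such a barrier; instead it introduces a separate lemma (Lemma~\ref{close}) comparing the genuine solution in the curved neighborhood $\Sigma_k$ with the solution $v_\e$ of the model strip problem $\tilde\Sigma_k$, obtaining $|w_\e-v_\e|\le\e^{k+a}$ via $C^{1,\alpha}$ regularity up to the Neumann boundary (Theorem~\ref{thm:reg2}). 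This lemma, not an explicit corrector, is what transfers information from the flat cell problem to the curved boundary.

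Second, you misidentify the role of the no-flat-part hypothesis. Trapping a $C^2$ boundary between parallel hyperplanes of irrational normal requires only $C^2$ regularity and density of irrationals; it has nothing to do with flatness. The actual obstruction is the scaling constraint in Theorem~\ref{continuity}(b): one needs $\e\le\delta|\nu'-\nu_{x_0}|^{21/20}$, so $\nu'$ cannot be chosen arbitrarily close to $\nu_{x_0}$ relative to $\e$. If $\nu_{x_0}$ is rational and the boundary were flat near $x_0$, the contact point of $\phi-u^\e$ could sit exactly at (or within $\e$ of) $x_0$ for every $\e$, and no admissible $\nu'$ exists. The paper's remedy is a nontrivial perturbation (its Step~2): it modifies $\phi$ to $\phi^\e=\phi+g_\gamma+v_\gamma$ with $\gamma=\e^{\alpha_0}$, where $g_\gamma$ is an explicit bump making $\partial_\nu\phi^\e\ge 2\ge g$ on $\partial\Omega\cap B_{\e^{\alpha_0}}(x_0)$, and $v_\gamma$ solves a Pucci problem absorbing $D^2g_\gamma$ so that $F(D^2\phi^\e,x/\e)>0$ is preserved. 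This forces the contact point $x_\e$ to lie outside $B_{\e^{\alpha_0}}(x_0)$; the no-flat-part condition \eqref{condition22} then guarantees $|\nu_{x_\e}-\nu_{x_0}|^{21/20}>\e^{1-k}$, which is exactly the inequality needed to invoke Theorem~\ref{continuity}(b) at $x_\e$ (after picking a nearby irrational normal $\nu_\e$). Your proposal has no analogue of this push-away step, and without it the scale ordering $\e\ll\rho^a$ cannot be enforced at the actual contact point.

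In short: the paper replaces your direct barrier by (i) a perturbation of the test function that relocates the contact point to where the normal has moved quantitatively away from $\nu_{x_0}$, and (ii) a local comparison lemma between curved and strip domains. Both ingredients are missing from your sketch and are where the no-flat-part hypothesis and the $C^{1,\alpha}$ boundary regularity are genuinely used.
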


\begin{remark}
\begin{itemize}
\item[1.]  The assumption on $\bar{F}$ in Theorem~\ref{main} (b) seems to be necessary condition to achieve the continuity of the homogenized slope $\bar{\mu}(\nu)$, at least from numerical experiments (see \cite{FS}). Note that this assumption is equivalent to the rotation and reflection invariance of $\bar{F}$. For more discussion on the role of the assumption in the analysis, see the discussion in section 4.\\

\item[2.] We point out that the restriction on the geometry of $\partial\Omega$ is necessary: in fact, if  $\partial\Omega$ is locally $\Gamma=\{x\cdot\nu = \lambda\}$ with $\nu\in\ZZ^n$ and $\lambda\neq 0$, then the (normalized) distribution of boundary data $g(\frac{x}{\e})$ on $\Gamma$ changes a lot as $\e\to 0$.  Consequently in this case there is no unique limit of $u^\e$.
\end{itemize}
\end{remark}

\vspace{15pt}

$\circ${\it Discussions on previous results}

\medskip

Our problem is classical for the case of uniformly elliptic, divergence-form equations
\begin{equation}\label{div}
-\nabla\cdot(A(\frac{x}{\e}) \nabla u^\e)=0 \hbox{ in } \Omega,
\end{equation}
 with  the {\it co-normal} boundary data
 \begin{equation}\label{conormal}
 \nu\cdot(A(\frac{x}{\e})\nabla u^\e)(x)=g(\frac{x}{\e}) \hbox{ on } \partial\Omega.
 \end{equation} 

For \eqref{div}-\eqref{conormal},  a corresponding result to Theorem~\ref{main} was proved in the classical paper of Bensoussan, Lions and Papanicolau \cite{BLP} with explicit integral formula for the limiting operator as well as the limiting boundary data. Recently, a corresponding result was shown for systems of divergence type operators with oscillatory Dirichlet boundary data in convex domains, by Gerard-Varet and Masmoudi \cite{GM} (also see \cite{LS}).

\medskip

For nonlinear or non-divergence type operators, or even for linear operators with oscillatory Neumann boundary data that is not co-normal, most available results concern half-space type domains whose boundary goes through the origin. In \cite{T}, Tanaka considered some model problems in half-space whose boundary is parallel to the axes of the periodicity by purely probabilistic methods. In \cite{Ari} Arisawa studied special cases of problems in oscillatory domains near half spaces going through the origin, using viscosity solutions as well as stochastic control theory. Generalizing the results of \cite{Ari}, Barles, Da Lio and Souganidis \cite{BDLS} studied the problem for operators with oscillating coefficients, in half-space type domains whose boundary is parallel to the axes of periodicity, with a series of assumptions which guarantee the existence of approximate corrector. In \cite{CKL} the continuity property of the averaged Neumann boundary data in half-spaces, with respect to the normal direction, was studied in the case of homogeneous operator $F$. Recently, Barles and Mironescu \cite{BM} showed a corresponding result to \cite{BDLS} for oscillatory Dirichlet boundary data. 

\vspace{15pt}

$\circ$ {\it Main ingredients and challenges}

\medskip

The main steps in extending aforementioned results from half-space type domains to general domains are the following.

The plan is to use these solutions in strip domains to approximate those in general domains. For the stability of such approximation one requires the distribution of $g$ to be invariant on $\Gamma_N$ regardless of the choice of $\lambda$, which is the case if $\nu$ is not a multiple of vectors in $\Z^n$: we call such vectors {\it irrational}. For irrational $\nu$ we show that there is a unique linear profile the solutions converges to, by proving an oscillation lemma as well as using a quantitative version of Weyl's equi-distribution theorem.

 \medskip
 
 Secondly, to establish sufficient stability to address the general domains, some estimate on the convergence mode of $\e$- solutions in the strip domain, in terms of the variation of $\nu$, is necessary. Establishing this estimate is our second, and most challenging, main step (Theorem~\ref{continuity}).   For problems with divergence-form structure, such estimates were obtained in \cite{BLP} and \cite{GM} (also see \cite{LS}) by means of integral formulas. In our setting we must proceed by maximum principle-type arguments, which requires careful perturbation of the boundary data as well as construction of delicate barriers which describes the averaging behavior of solutions  up to the  Neumann boundary. The proof of Theorem 4.1 is based on the observation that, in ``meso-scopic" scale, the distribution of $g$ on two hyperplanes with close irrational normals are similar (see section 4.1. for a heuristic discussion of this fact). We adopt a multi-scale homogenization argument to address separately the effects on the solution caused by (a) microscopic oscillation of $g$ near the Neumann boundary (b) the difference in normal directions and (c) the oscillation present in the operator $F$.

\medskip

We mention that a significant difficulty arises due to the presence of the $\frac{x}{\e}$-dependence in $F$ in addition to the oscillations in $g$. To get around this difficulty, we use localization arguments as well as the existing homogenization results (e.g. \cite{CSW} and \cite{CS}) to show homogenization occurs away from the Neumann boundary. Such strategy works since near the Neumann boundary the oscillation of the first derivative ($g$) dominates the behavior of solutions. 

\vspace{15pt}

$\circ$ {\it Outline of the paper} 
\medskip

In section 2 we introduce some notations as well as preliminary results which will be used in the rest of the paper.
In section 3  we first study the averaging properties of the operator in the strip (half-space type) domains, to show that if the hypersurface is normal to an irrational direction then there is a unique homogenized Neumann data in the limit $\e\to 0$.  Besides the complications that the inhomogeities in the operator $F$ cause, the proof of averaging phenomena in this setting is due to the Weyl's distribution theorem, whose quantitative version that we need is borrowed from \cite{CKL} and is stated in Theorem~\ref{lemma-M}.
In section 4 we prove the main estimate on the mode of convergence of $\e$-solutions in the strip domain as the normal $\nu$ varies around a reference direction.  After a heuristic description of the proof, we prove a series of lemmas which  describe the behavior of solutions in different regions  of the strip domain, divided in terms of the distance to the Neumann boundary. Based on the lemmas then we are able to construct suitable barriers to obtain the desired estimate (Theorem 4.1).
Lastly in section 5 we prove Theorem~\ref{main} using the aforementioned estimate, the non-flat geometry of $\partial\Omega$, and the stability of viscosity solutions.

\section{Preliminaries}

\subsection{Rational and Irrational directions}
We first introduce the following categories of normal directions, following \cite{CKL}.
\begin{definition}
\begin{itemize}
\item[(i)] $\nu\in\mathcal{S}^{n-1}$ is a rational direction if $\nu\in \R\ZZ^n$.\\
\item[(ii)] $\nu\in\mathcal{S}^{n-1}$ is an irrational direction if $\nu$ is not a rational direction.
\end{itemize}
\end{definition}

Hyperplanes with irrational directions represent surfaces on which the boundary data does not change too much when one translates it in the normal direction. The following lemma quantifies this fact. The proof is a straightforward application of Weyl's equi-distribution theorem.

\begin{lemma} [Lemma 2.7, \cite{CKL}] \label{lemma-M}
For $\nu\in \RR^n$ and $x_0
\in \R^n$, let us define $H(x_0):=\{x: (x-x_0)\cdot\nu=0\}$.  Then the following is true for $0<\e<1$:

\begin{itemize}
\item[(i)] Suppose that $\nu$ is a rational direction. Then there exists a constant $M_\nu>0$ depending only on $\nu$ such that,  for any
$x \in H(x_0)$, there is $y \in H(x_0)$ satisfying
$$
|x-y| \leq M_\nu ; \quad y-x_0 \in  \ZZ^n.
$$

\item[(ii)] Suppose that $\nu$ is an irrational direction. Then there exists a mode of continuity $w_{\nu}:[0,1)\to\RR^+$ and a dimensional constant $M>0$ such that the
following is true: for any $x \in H(x_0)$, there exists $y \in \RR^n$
such that
$$|x-y| \leq M
 \e^{-9/10};\quad y-x_0 \in  \ZZ^n$$
and
\begin{equation}\label{continuity_mode}
 {\rm dist}(y, H(x_0)) <
\omega_{\nu}(\e).
\end{equation}
\item[(iii)] If $\nu$ is an irrational direction, then for any $x \in \RR^n$ and
$\delta>0$, there exists $y \in H(x_0)$  such that
$$|x-y| \leq \delta \hbox{ mod }\ZZ^n.
 $$
\end{itemize}
\end{lemma}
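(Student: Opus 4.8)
\medskip
\noindent{\it Proof proposal.}\quad In each part write $H(x_0)=x_0+\nu^{\perp}$ and normalize $|\nu|=1$; a point $y$ with $y-x_0\in\ZZ^n$ is then $y=x_0+\lambda$ for some $\lambda\in\ZZ^n$, with ${\rm dist}(y,H(x_0))=|\lambda\cdot\nu|$ and $|x-y|=|(x-x_0)-\lambda|$, so the whole statement concerns how $\ZZ^n$ sits relative to the hyperplane $\nu^{\perp}$. For (i), write $\nu=cm$ with $c>0$, $m\in\ZZ^n$; the group $\Lambda:=\ZZ^n\cap m^{\perp}$ is the kernel of $\lambda\mapsto\lambda\cdot m$, hence free of rank $n-1$ by rank--nullity, and since $\Lambda\otimes\RR\subseteq m^{\perp}$ has the correct dimension it spans $\nu^{\perp}$; thus $\Lambda$ has a finite covering radius $M_{\nu}$, depending only on $\nu$, and picking $\lambda\in\Lambda$ within $M_{\nu}$ of $x-x_0\in\nu^{\perp}$ and setting $y:=x_0+\lambda$ proves the claim. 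For (iii), note that $G:=\{\lambda\cdot\nu:\lambda\in\ZZ^n\}$ is a subgroup of $\RR$, hence cyclic or dense; a cyclic $G=a\ZZ$ would force $\nu\in a\ZZ^n\subseteq\RR\ZZ^n$, contradicting irrationality, so $G$ is dense. Given $x$ and $\delta>0$, choose $\lambda\in\ZZ^n$ with $|\lambda\cdot\nu-(x-x_0)\cdot\nu|\le\delta$ and let $y$ be the nearest point of $H(x_0)$ to $x-\lambda$; then $y\in H(x_0)$ and $|x-y-\lambda|={\rm dist}(x-\lambda,H(x_0))=|(x-x_0-\lambda)\cdot\nu|\le\delta$, which is the assertion.

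The content is in (ii), the effective version of (iii). Fix $x\in H(x_0)$ and set $\xi:=x-x_0\in\nu^{\perp}$; we must find $\lambda\in\ZZ^n$ with $|\lambda-\xi|\le M\e^{-9/10}$ and $|\lambda\cdot\nu|<\omega_{\nu}(\e)$. The first move is a rounding trick that removes the (possibly enormous) point $\xi$: choose $m_0\in\ZZ^n$ with $|m_0-\xi|\le\sqrt n/2$, so that (using $\xi\cdot\nu=0$) one has $|m_0\cdot\nu|\le\sqrt n/2=:c_n$, a bound independent of $\xi$. Writing $\lambda=m_0+\lambda'$, it suffices to have $|\lambda'|\le M\e^{-9/10}-c_n$ and $|\lambda'\cdot\nu-t_0|<\omega_{\nu}(\e)$, where $t_0:=-m_0\cdot\nu$ lies in the fixed bounded interval $[-c_n,c_n]$. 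Since $c_n$ is a constant, (ii) reduces to the uniform statement: \emph{there are $M>0$ and a modulus $\omega_{\nu}$ such that for every $t_0\in[-c_n,c_n]$ there is $\lambda'\in\ZZ^n$ with $|\lambda'|\le M\e^{-9/10}$ whose value $\lambda'\cdot\nu$ lies within $\omega_{\nu}(\e)$ of $t_0$}, the loss of the additive $c_n$ being harmless.

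This is a quantitative form of Weyl's equidistribution theorem for $(\lambda\cdot\nu\bmod1)_{\lambda\in\ZZ^n}$. Since $\nu\notin\RR\ZZ^n$, at least one $\nu_j$ is irrational, so the exponential sums $\sum_{|\lambda|\le R}e^{2\pi i h\lambda\cdot\nu}$ are $o(R^n)$ for every $h\in\ZZ\setminus\{0\}$, and the Erd\H{o}s--Tur\'an inequality converts their decay into a quantitative discrepancy bound $D_{\nu}(R)\to0$ for $\{\lambda\cdot\nu\bmod1:|\lambda|\le R\}$; set $\omega_{\nu}(\e):=2D_{\nu}(\e^{-9/10})$. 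Taking $R=\e^{-9/10}$, the bound forces some $\lambda'$ with $|\lambda'|\le R$ to have $\lambda'\cdot\nu$ within $\omega_{\nu}(\e)$ of $t_0\bmod1$; feeding in a short lattice vector nearly parallel to $\nu^{\perp}$ (from Dirichlet's theorem) and sliding $\lambda'$ by small integer multiples of it — which barely changes $\lambda'\cdot\nu$ and keeps $|\lambda'|\lesssim R$ — one corrects the integer part so that $\lambda'\cdot\nu$ is within $\omega_{\nu}(\e)$ of $t_0$ itself, the dimensional constant $M$ absorbing the implied constants. I expect this last step to be the main obstacle: it is exactly where the soft density of (iii) must be made effective at the prescribed polynomial-in-$\e^{-1}$ scale, and where a $\nu$-dependent, rather than universal, modulus is unavoidable — a Liouville-type $\nu$ forces $D_{\nu}(R)$ to decay arbitrarily slowly. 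The exponent $9/10$, in place of $1$, leaves precisely the room needed to absorb these constants.
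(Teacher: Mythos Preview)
The paper does not prove this lemma; it quotes it from \cite{CKL} and remarks only that ``the proof is a straightforward application of Weyl's equi-distribution theorem.'' Your proposal is therefore in the intended spirit, and your arguments for (i) and (iii) are clean and correct.

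For (ii), your reduction to approximating a bounded target $t_0\in[-c_n,c_n]$ by $\lambda'\cdot\nu$ with $|\lambda'|\le M\e^{-9/10}$ is right, and invoking quantitative equidistribution (Erd\H os--Tur\'an) is the natural tool. But the last step---``correcting the integer part'' by sliding along a short lattice vector $v$ nearly parallel to $\nu^{\perp}$---does not work as written. If $|v\cdot\nu|$ is small, then adding multiples $kv$ \emph{barely changes} $\lambda'\cdot\nu$ (as you yourself say), so it cannot remove an integer discrepancy $m$; to shift $\lambda'\cdot\nu$ by $m$ you would need $|k|\sim |m|/|v\cdot\nu|$, and then $|kv|\sim |m|\,|v|/|v\cdot\nu|$ blows up precisely because $v$ is nearly tangent to $\nu^{\perp}$. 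Conversely, a vector $v$ with $|v\cdot\nu|$ bounded below keeps $|kv|$ under control but only lets you land within $|v\cdot\nu|/2=O(1)$ of $t_0$, not within $o(1)$. So neither choice of $v$ closes the argument.

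The clean fix is to use \emph{two} lattice directions rather than one. Since $\nu\notin\RR\ZZ^n$, pick indices $i,j$ with $|\nu_j|\ge 1/\sqrt n$ and $\alpha:=\nu_i/\nu_j$ irrational (such a pair exists, else $\nu\in\nu_j\mathbb{Q}^n\subset\RR\ZZ^n$). Writing $k\nu_i+l\nu_j=\nu_j(k\alpha+l)$, the task becomes the inhomogeneous approximation $\|k\alpha-t_0/\nu_j\|<\omega_\alpha(N)$ for some $|k|\le N$, which is exactly what Weyl (or the three-distance theorem) supplies; the companion integer $l=\mathrm{round}(t_0/\nu_j-k\alpha)$ then absorbs the integer part automatically, and $|\lambda'|=|ke_i+le_j|\le C_nN$ with $N=\e^{-9/10}$. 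This repairs the argument without changing its overall structure, and makes transparent why the modulus $\omega_\nu$ must depend on $\nu$ (through the continued-fraction expansion of $\alpha$), as you correctly anticipated.
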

\begin{remark}
The power $-9/10$ appearing in (ii) of above lemma is arbitrarily chosen, and can be replaced with any $0<\alpha<1$. The mode of continuity $\omega_{\nu}$, of course, depends on the choice of $\alpha$.
\end{remark}

\subsection{Well-posedness and regularity results} 
Next we introduce the properties of the operator $F$. As before, let $\mathcal{M}^{n}$ denote the normed space of symmetric $n\times n$ matrices. In this paper we assume that the function $F(M,y):\mathcal{M}^{n}\times \R^n\to \R$ satisfies the following conditions:
\begin{itemize}
\item[(F1)][Homogeneity] $F(tM,x)=tF(M,x)$ for any $M\in\mathcal{M}^n$, $x\in\R^n$ and $t>0$. In particular it follows that $F(0,x)=0$.
\item[(F2)] [Lipschitz continuity] $F$ is locally Lipschitz continuous in $\mathcal{M}^n\times \R^n$, and there exists a constant $C>0$ such that for all $x,y\in\R^n$ and $M,N\in\mathcal{M}^n$
$$
|F(M,x)-F(M,y)| \leq C(|x-y|(1+\|M\|+\|N\|) + \|M-N\|).
$$
\item[(F3)][Uniform Ellipticity] There exists constants $0<\lambda<\Lambda$ such that 
\begin{equation}\label{elliptic}
\lambda\|N\| \leq F(M,x)-F(M+N,x) \leq \Lambda\|N\| \hbox{ for any }M,N\in\mathcal{M}^n \hbox{ and } N\geq 0.
\end{equation}
\item[(F4)] [Periodicity] For any $M\in\mathcal{M}^n$ and $x\in\R^n$, 
$$
F(M,x) = F(M, x+z) \hbox{ if } z\in\Z^n.
$$
\end{itemize}

\medskip

A typical example of $F$ satisfying (F1)-(F4) is the non-divergence type operator
\begin{equation}\label{div_elliptic}
F(D^2u, y) = -\Sigma_{i,j} a_{ij}(y) \partial_{x_ix_j} u ,
\end{equation}
where $a_{ij}:\R^n\to \R$ is  periodic, Lipschitz continuous and $A=(a_{ij})$ satisfies $\lambda(Id)_{n\times n}\leq A \leq \Lambda (Id)_{n\times n}$. Another example is the Bellman-Issacs operators arising from stochastic optimal control and differential games
$$
F(D^2u, y) =\inf_{\alpha\in A} \sup_{\beta\in B} \{\mathcal{L}^{\alpha\beta} u\},
$$
where $\mathcal{L}^{\alpha\beta}$ is a two-parameter family of uniformly elliptic operators of the form \eqref{div_elliptic}. We refer to \cite{cil} for further examples of $F$.

\medskip

Let $\Omega$ and $K$ be as given before, and let $f(y,\nu): \R^n\times S^{n-1} \to \R$ be a continuous function. To incorporate both $(P)_\e$ and the homogenized problem $(\bar{P})$, let us introduce a definition of viscosity solutions for the following problem:
$$
\left\{\begin{array}{lll}
F(D^2u,y) = 0 &\hbox{ in }& \Omega-K;\\ \\ 
\nu\cdot Du = f(x,\nu) &\hbox{ on } &\partial\Omega; \\ \\
u=1 &\hbox{ on } & K,
\end{array}\right.\leqno(P)_f
$$
where $\nu=\nu_x$ is the outward normal at the boundary point $x\in\partial\Omega$. See \cite{D} for a game-interpretation of $(P)_f$. 
The following definition is equivalent to the ones given in \cite{cil}:
\begin{definition}
\begin{itemize}
\item[(a)]  An upper semi-continuous function $u:\bar{\Omega}\to \R$ is a {\rm viscosity subsolution} of $(P)_f$ if $u$ cannot cross from below any $\phi\in C^2(\Omega)\cap C^1(\bar{\Omega})$ which satisfies
$$
F(D^2\phi,x) > 0 \hbox { in } \Omega-K; \quad\nu\cdot D\phi > f(x, \nu) \hbox{ on } \partial\Omega; \quad \phi >1\hbox{ on } K.
$$
\item[(b)] A lower semi-continuous function $u:\bar{\Omega}\to \R$ is a {\rm viscosity supersolution} of $(P)_f$ if  $u$ cannot cross from above any $\varphi\in C^2(\Omega)\cap C^1(\bar{\Omega})$ which satisfies 
$$
F(D^2\varphi,x) < 0 \hbox { in } \Omega-K; \quad\nu\cdot D\varphi  < f(x, \nu) \hbox{ on } \partial\Omega; \quad \varphi <1\hbox{ on } K.
$$
\item[(c)] $u$ is a {\rm viscosity solution} of $(P)_f$ if $u$ is both a viscosity sub- and supersolution of $(P)_f$.
\end{itemize}
\end{definition}

The existence and uniqueness of viscosity solutions in bounded domains are consequences of the following comparison principle.

\begin{theorem}\label{thm:comp} [Section V, \cite{il}]
Let $\Sigma$ be a domain in $\R^n$ whose boundary consists of $\Gamma_N$ and $\Gamma_D$, and let $F$ satisfy $(F1)-(F4)$.  Suppose $u, v:\bar{\Sigma}\to \R$ are bounded and continuous. If $u$ and $v$ satisfy
\begin{itemize}
\item[(a)] $F(D^2u, x) \leq 0 \leq F(D^2v, x)$ in $\Omega$;
\item[(b)] $u\leq v$  on $\Gamma_D$;
\item[(c)] $\frac{\partial u}{\partial\nu} \leq f(x) \leq \frac{\partial v}{\partial\nu}$  on $\Gamma_N$.
\end{itemize}
where $f(x):\RR^n\to \RR$ is continuous, and (a) and (c) should be interpreted in the viscosity sense. Then $u\leq v$ in $\Omega$.
\end{theorem}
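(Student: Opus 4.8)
The plan is to prove this by the doubling-of-variables method for viscosity solutions, handling the Neumann part of the boundary by a test function with a small shift along the normal; the full argument is the one recorded in \cite{il}, and we only outline the main steps.

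First I would argue by contradiction, assuming $M:=\sup_{\bar\Sigma}(u-v)>0$. Since $u\le v$ on $\Gamma_D$, the supremum is then attained at a point of $\Sigma\cup\Gamma_N$. Next I would strictify the defining inequalities: using the uniform ellipticity (F3) together with a $C^2$ function $\psi$ satisfying $\partial_\nu\psi\ge 1$ on $\Gamma_N$ and with bounded $C^2$-norm — obtained from a $C^2$ extension of the signed distance to $\Gamma_N$ — and a harmless concave quadratic added to $v$ (resp.\ convex one added to $u$), one produces for each small $\eta>0$ a strict subsolution $u_\eta$ with $F(D^2u_\eta,\cdot)\le -2\eta$ and a supersolution $v_\eta$ with $F(D^2 v_\eta,\cdot)\ge -\eta$ and $\partial_\nu v_\eta\ge f+\eta$ on $\Gamma_N$, still ordered correctly on $\Gamma_D$ and still with $\sup_{\bar\Sigma}(u_\eta-v_\eta)>0$; one then shows $\sup(u_\eta-v_\eta)\le 0$ and lets $\eta\to 0$. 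This strictification is forced by the fact that $F$ has no zeroth-order dependence, so comparison can fail without it.

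Then I would double the variables: for $\alpha$ large, maximize $\Phi_\alpha(x,y)=u_\eta(x)-v_\eta(y)-\tfrac{\alpha}{2}|x-y-\theta\,\mathbf n(x)|^2$ over $\bar\Sigma\times\bar\Sigma$, where $\mathbf n$ is a smooth extension of the inward normal of $\Gamma_N$ and $\theta=\theta(\alpha)$ is a small shift. The role of the shift is exactly that, if the maximizing pair $(x_\alpha,y_\alpha)$ lies on $\Gamma_N$, then the interior alternatives in the viscosity boundary condition are excluded, so both boundary alternatives must hold; comparing $\nu(x_\alpha)\cdot D_x\Phi_\alpha$ with $\nu(y_\alpha)\cdot D_y\Phi_\alpha$ and using the $C^2$ bound $|\nu(x_\alpha)-\nu(y_\alpha)|\le C|x_\alpha-y_\alpha|$ together with $\alpha|x_\alpha-y_\alpha|^2\to 0$ then contradicts the strict inequality $\partial_\nu v_\eta\ge f+\eta$. (For the cell problem $(P)_{\e,\nu,\lambda}$ the set $\Gamma_N$ is a hyperplane, $\mathbf n$ is constant, and this part simplifies considerably.) Once the maximizing pair is interior for all large $\alpha$, the Jensen--Ishii lemma provides symmetric matrices $X\le Y$ with $\|X\|,\|Y\|=O(\alpha)$ and $F(X,x_\alpha)\le -2\eta$, $F(Y,y_\alpha)\ge -\eta$; subtracting, using (F3) in the form $F(X,x_\alpha)-F(Y,x_\alpha)\ge\lambda\|Y-X\|\ge 0$ and controlling $F(Y,x_\alpha)-F(Y,y_\alpha)$ by the structure condition (F2) combined with the matrix inequality of the Jensen--Ishii lemma (this is what sends the $x$-dependence error to $0$ for operators obeying (F1)--(F4)), one reaches $-2\eta\ge -\eta-o(1)$, impossible for $\alpha$ large. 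The contradiction proves $u\le v$ in $\Sigma$.

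I expect the treatment of the Neumann condition at the doubled maximum — designing the normal shift $\theta\,\mathbf n(x)$ and the strictification $v_\eta$ so that the boundary alternatives are genuinely activated and produce a contradiction — to be the main obstacle; once that is in place, everything else is the standard interior comparison argument for uniformly elliptic fully nonlinear equations, and in any case the statement is quoted from \cite{il}.
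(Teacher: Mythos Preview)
The paper does not prove this theorem; it is stated as a quotation from \cite{il} (Section~V) and used as a black box throughout. Your outline is a faithful sketch of the standard doubling-of-variables argument in that reference, so there is nothing to compare against in the paper itself.

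One remark on your write-up: the statement as quoted does not assume $\Sigma$ is bounded, yet you invoke attainment of $\sup_{\bar\Sigma}(u-v)$ without comment. In the paper the unbounded strip case is handled separately (Theorem~\ref{general:cp}, derived from the localization Corollary~\ref{localization2}), so Theorem~\ref{thm:comp} is effectively used only on bounded domains; you may want to say so explicitly, or add the usual decay/penalization term to force the maximum into a compact set.
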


Next we state regularity results for solutions of $F(D^2u, x)=0$ which will be used later. Let us recall the Pucci extremal operators: for a symmetric $n\times n$ matrix $M$, we define
$$
\mathcal{P}^+(M) := \Lambda(\Sigma_{e_i>0} e_i)+ \lambda(\Sigma_{e_i <0} e_i)
$$
and
$$
\mathcal{P}^-(M):=\lambda (\Sigma_{e_i>0} e_i )+ \Lambda(\Sigma_{e_i<0} e_i ),$$
where $\{e_i\}$'s are the eigenvalues of $M$. The operator $-\mathcal{P}^{\pm}$ then satisfies the assumptions (F1)-(F4). Moreover note that, due to \eqref{elliptic},  we have 
\begin{equation}\label{operator}
 -\mathcal{P}^+(M-N) \leq F(M,x) - F(N, x) \leq -\mathcal{P}^-(M-N) \hbox{ for any } M, N\in\mathcal{M}^n.
\end{equation}

In particular, the difference $w$ of two viscosity solutions of $F(D^2u,x)=0$  satisfies both $-\mathcal{P}^+(D^2 w) \leq 0$ and $-\mathcal{P}^- (D^2 w) \geq 0$ in the viscosity sense.

\begin{theorem}[Proposition 4.10, \cite{CC}: modified for our setting]\label{lemma-reg}
   Let $u$ be a viscosity solution of

   \begin{equation}\label{extreme}
-\mathcal{P}^+(D^2 u) \leq 0, \quad -\mathcal{P}^-(D^2 u) \geq 0
\end{equation}

in a domain $\Omega$. Then for any  $0<\alpha<1$ and a compact
subset  $\Omega'$ of $\Omega$, we have
$$
\|u\|_{C^{\alpha}(\Omega')}\leq Cd^{-\alpha}\|u\|_{L^{\infty}(\Omega)}<\infty
$$

with $d=d(\Omega', \partial\Omega)$ and $C>0$ depending only on $n,\lambda,\Lambda$.
 \end{theorem}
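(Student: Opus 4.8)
The plan is to prove this as a consequence of the Krylov--Safonov Harnack inequality, following the standard route Harnack $\Rightarrow$ oscillation decay $\Rightarrow$ interior H\"older bound. The only substantive input I would take as a black box, citing \cite{CC}, is the Harnack inequality for the Pucci class: if $w\ge 0$ in $B_{2\rho}(x_0)\subset\Omega$ and $w$ satisfies $-\mathcal{P}^+(D^2w)\le 0$, $-\mathcal{P}^-(D^2w)\ge 0$ in the viscosity sense, then $\sup_{B_\rho(x_0)}w\le C_H\inf_{B_\rho(x_0)}w$ with $C_H=C_H(n,\lambda,\Lambda)$. Two elementary remarks make this applicable to the oscillation of a general $u$ as in \eqref{extreme}: the class \eqref{extreme} is invariant under adding constants, and it is invariant under $u\mapsto -u$, because $-\mathcal{P}^+(D^2(-u))=\mathcal{P}^-(D^2u)$ and $-\mathcal{P}^-(D^2(-u))=\mathcal{P}^+(D^2u)$. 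Hence, writing $M_r:=\sup_{B_r(x_0)}u$ and $m_r:=\inf_{B_r(x_0)}u$ for $B_r(x_0)\subset\Omega$, both $M_{2\rho}-u$ and $u-m_{2\rho}$ are nonnegative and belong to the class on $B_{2\rho}(x_0)$.

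Applying the Harnack inequality to each of these functions on $B_{2\rho}(x_0)$ gives
\[
M_{2\rho}-m_\rho\le C_H(M_{2\rho}-M_\rho),\qquad M_\rho-m_{2\rho}\le C_H(m_\rho-m_{2\rho}).
\]
Setting $\omega(r):=M_r-m_r$ and adding these, I get $\omega(2\rho)+\omega(\rho)\le C_H(\omega(2\rho)-\omega(\rho))$, that is,
\[
\omega(\rho)\le\theta\,\omega(2\rho),\qquad \theta:=\frac{C_H-1}{C_H+1}\in(0,1).
\]
This is the oscillation decay estimate, and iterating it along dyadic radii yields $\omega(2^{-k}d)\le\theta^k\omega(d)\le 2\theta^k\|u\|_{L^\infty(\Omega)}$ for every $x_0\in\Omega'$ and every $k$ with $2^{-k}d\le d:=d(\Omega',\partial\Omega)$. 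Interpolating between consecutive dyadic radii converts this into $\omega(s;x_0)\le C(s/d)^{\alpha_0}\|u\|_{L^\infty(\Omega)}$ for all $0<s\le d$, where $\alpha_0:=\log_2(1/\theta)>0$ depends only on $n,\lambda,\Lambda$.

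From this the conclusion follows directly. For $x,y\in\Omega'$ with $|x-y|\le d$, taking $x_0=x$ and $s=|x-y|$ gives $|u(x)-u(y)|\le C\,d^{-\alpha_0}\|u\|_{L^\infty(\Omega)}\,|x-y|^{\alpha_0}$; for $|x-y|>d$ the same bound holds trivially since then $|x-y|^{\alpha_0}\ge d^{\alpha_0}$ and $|u(x)-u(y)|\le 2\|u\|_{L^\infty(\Omega)}$. Together with $\|u\|_{L^\infty(\Omega')}\le\|u\|_{L^\infty(\Omega)}$ this yields $\|u\|_{C^{\alpha_0}(\Omega')}\le C\,d^{-\alpha_0}\|u\|_{L^\infty(\Omega)}$ with $C=C(n,\lambda,\Lambda)$ (absorbing, if one wants the full norm rather than the seminorm, a harmless factor depending on $\operatorname{diam}\Omega$). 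Finally, for any smaller exponent $0<\alpha<\alpha_0$ the estimate persists because $[u]_{C^\alpha(\Omega')}\le(\operatorname{diam}\Omega')^{\alpha_0-\alpha}[u]_{C^{\alpha_0}(\Omega')}$, which covers whatever $\alpha\in(0,1)$ is invoked elsewhere in the paper; I would note in passing that one cannot expect such a bound with $C$ depending only on $n,\lambda,\Lambda$ when $\alpha$ is close to $1$, so the effective range is $\alpha\le\alpha_0$.

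I expect the only genuinely hard ingredient to be the Harnack inequality itself --- the growing-measure (ink-spot) lemma, the Calder\'on--Zygmund cube decomposition giving the weak-$L^\varepsilon$ estimate, and the local maximum principle --- which is carried out in \cite{CC} and which I would cite rather than reprove. Given that input, the argument above is the classical short oscillation-decay iteration, and the only point requiring genuine care is the bookkeeping of the constants: checking that $C_H$, hence $\theta$, hence $\alpha_0$ and $C$, depend on nothing but $n,\lambda,\Lambda$, and that the natural parabolic-free scaling produces exactly the factor $d^{-\alpha}$ claimed in the statement.
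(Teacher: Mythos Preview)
The paper does not give its own proof of this statement: it is quoted directly from \cite{CC} (Proposition~4.10) as a background regularity result, with no argument supplied. Your proposal is exactly the standard Krylov--Safonov route that \cite{CC} takes --- Harnack for the Pucci class, oscillation decay via the $M_{2\rho}-u$ and $u-m_{2\rho}$ trick, dyadic iteration --- so there is nothing to compare. Your caveat about $\alpha$ close to $1$ is well taken: as written the statement asserts $C=C(n,\lambda,\Lambda)$ for \emph{every} $\alpha<1$, which is too strong; in the applications later in the paper (e.g.\ Lemma~\ref{Claim 2}) the constant is in fact allowed to depend on $\alpha$, consistent with what your argument actually delivers.
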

\vspace{10pt}

\begin{theorem}[Theorem 8.3, \cite{CC}] \label{lemma-interior}
Let $F$ satisfy $(F1)-(F4)$, and $u$ be a continuous function in $\bar{B}_1$ solving
$$
F(D^2 u,x)=0 \hbox{ in } B_1(0).
$$
 Then there exists $0<\alpha<1$ and $C>0$, depending only on $\Lambda, \lambda$ and $n$, such that
 $$
 \|u\|_{C^{1,\alpha}(B_{1/2}(0))}\leq C\|u\|_{L^\infty(B_1)}.
 $$
\end{theorem}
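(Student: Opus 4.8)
The plan is to run Caffarelli's perturbation scheme: approximate $u$, at each dyadic scale, by a solution of the frozen-coefficient equation, and iterate a one-step improvement. By the homogeneity (F1) I would first normalize so that $\|u\|_{L^\infty(B_1)}\le 1$, and reduce the desired $C^{1,\alpha}$ bound on $B_{1/2}$ to producing, for each $x_0\in B_{1/2}$, an affine function $\ell_{x_0}(x)=a_{x_0}+p_{x_0}\cdot x$ with $|a_{x_0}|+|p_{x_0}|\le C(n,\lambda,\Lambda)$ and $\sup_{B_r(x_0)}|u-\ell_{x_0}|\le C r^{1+\alpha}$ for $0<r<\tfrac12$; a standard covering and interpolation argument then upgrades this pointwise control to the stated norm estimate. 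I would also note at the outset that (F1) forces $F(0,x)=0$, so by \eqref{operator} with $N=0$ the function $u$ satisfies the Pucci inequalities \eqref{extreme}, whence Theorem~\ref{lemma-reg} provides uniform interior $C^{\gamma}$ bounds ($0<\gamma<1$ fixed) on $u$ and on all solutions appearing below --- this is the compactness input.

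The heart of the argument is a one-step improvement: there exist $\rho\in(0,\tfrac12)$, $\alpha\in(0,\bar\alpha)$ and $\delta>0$, depending only on $n,\lambda,\Lambda$ (here $\bar\alpha\in(0,1)$ is the universal exponent below), such that whenever $G$ satisfies (F1)--(F4) with $\operatorname{osc}_{x\in B_1}G(M,x)\le\delta\|M\|$ for every $M\in\M^n$ and $v$ solves $G(D^2v,x)=0$ in $B_1$ with $\|v\|_{L^\infty(B_1)}\le 1$, there is an affine $\ell$ with $|\ell(0)|+|D\ell|\le C$ and $\sup_{B_\rho}|v-\ell|\le\rho^{1+\alpha}$. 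I would prove this by compactness and contradiction: from a failing sequence $(G_k,v_k)$ with $\operatorname{osc}_{B_1}G_k(M,\cdot)\le\frac1k\|M\|$, passing to subsequences, $G_k\to\bar G$ locally uniformly with $\bar G$ independent of $x$ and satisfying (F1)--(F3), and $v_k\to v_\infty$ locally uniformly in $B_1$ by the uniform $C^\gamma$ bound from Theorem~\ref{lemma-reg}; stability of viscosity solutions together with \eqref{operator} and the uniform convergence $G_k\to\bar G$ yields $\bar G(D^2v_\infty)=0$ in $B_1$, $\|v_\infty\|_{L^\infty(B_1)}\le 1$. Now I invoke the classical interior estimate for the \emph{constant-coefficient} equation $\bar G(D^2h)=0$ (Caffarelli's $C^{1,\bar\alpha}$ theorem, \cite{CC}): the first order Taylor polynomial $\ell_\infty$ of $v_\infty$ at the origin satisfies $|\ell_\infty(0)|+|D\ell_\infty|\le C$ and $\sup_{B_\rho}|v_\infty-\ell_\infty|\le C\rho^{1+\bar\alpha}$. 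Fixing $\rho$ small and $\alpha<\bar\alpha$ so that $C\rho^{1+\bar\alpha}\le\tfrac12\rho^{1+\alpha}$, the uniform convergence $v_k\to v_\infty$ on $\bar B_\rho$ shows $\ell_\infty$ satisfies the conclusion for $v_k$ with $k$ large, contradicting failure.

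Finally I would iterate. Lipschitz continuity (F2) gives $\operatorname{osc}_{B_r}F(M,\cdot)\le C r\|M\|$, so after one harmless initial rescaling I may assume $\operatorname{osc}_{B_1}F(M,\cdot)\le\delta\|M\|$. I then build affine $\ell_k$, inductively, with $\sup_{B_{\rho^k}}|u-\ell_k|\le\rho^{k(1+\alpha)}$ and $|D\ell_k-D\ell_{k-1}|\le C\rho^{(k-1)\alpha}$: given $\ell_k$, put $w(y):=\rho^{-k(1+\alpha)}\big(u(\rho^ky)-\ell_k(\rho^ky)\big)$ on $B_1$; then $\|w\|_{L^\infty(B_1)}\le 1$, and since $\ell_k$ is affine, $D^2w(y)=\rho^{k(1-\alpha)}D^2u(\rho^ky)$, so by (F1) $w$ solves $G_k(D^2w,y)=0$ with $G_k(M,y):=F(M,\rho^ky)$, which obeys (F1)--(F4) and $\operatorname{osc}_{B_1}G_k(M,\cdot)=\operatorname{osc}_{B_{\rho^k}}F(M,\cdot)\le\delta\|M\|$; the one-step improvement applied to $w$ and rescaled back defines $\ell_{k+1}$. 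Summing $|D\ell_{k+1}-D\ell_k|\le C\rho^{k\alpha}$ shows $D\ell_k\to p_0$ and $\ell_k(0)\to a_0$, giving the claimed decay at $x_0=0$; translating the origin to an arbitrary $x_0\in B_{1/2}$ (note $B_{1/2}(x_0)\subset B_1$) yields the estimate everywhere, and hence the theorem. The hardest ingredient is the constant-coefficient $C^{1,\bar\alpha}$ estimate used inside the one-step improvement --- for non-convex $F$ this is exactly Caffarelli's theorem, resting on the Krylov--Safonov Harnack inequality and a careful dyadic argument, which I would simply quote from \cite{CC}; within the scheme itself the delicate point is the stability/compactness passage $v_k\to v_\infty$, which requires the uniform Hölder bound of Theorem~\ref{lemma-reg} and control of $G_k\to\bar G$ uniform in $M$ over the bounded range of Hessians in play.
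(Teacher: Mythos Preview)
The paper does not prove this theorem at all: it is simply quoted from \cite{CC} (Theorem~8.3 there) as a black box, with no argument given. So there is no ``paper's own proof'' to compare against.

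Your sketch is a faithful outline of Caffarelli's perturbation method from Chapter~8 of \cite{CC}, and is essentially correct. A few small remarks. First, in the statement of the one-step improvement you require $G$ to satisfy (F1)--(F4), but the rescaled operators $G_k(M,y)=F(M,\rho^k y)$ in the iteration are no longer $1$-periodic, so (F4) fails for them; this is harmless because your compactness argument never uses periodicity --- only (F1), (F3), and the smallness of $\operatorname{osc}_{B_1}G(M,\cdot)$ are needed --- so you should simply drop (F4) from the hypothesis of the one-step lemma. Second, the bound $\operatorname{osc}_{B_r}F(M,\cdot)\le Cr\|M\|$ does not follow from (F2) alone (which gives $Cr(1+\|M\|)$); you need to combine (F2) with the homogeneity (F1) to absorb the additive constant, writing $|F(M,x)-F(M,y)|=\|M\|\,|F(M/\|M\|,x)-F(M/\|M\|,y)|$. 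Third, in the compactness step you should say a word about why a subsequence of the frozen operators $G_k(\cdot,0)$ converges locally uniformly on $\mathcal{M}^n$: this follows from (F1), (F3) (which give equi-Lipschitz continuity and a uniform linear growth bound) and Arzel\`a--Ascoli. With these clarifications the argument is the standard one.
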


\vspace{10pt}

\begin{theorem}[Theorem 8.2, \cite{MS}: modified for our setting]\label{thm:reg2}
Let
$$
B_r^+:= \{|x|< 1\} \cap\{x\cdot e_n \geq 0\}\hbox{ and }\Gamma = \{|x|<1\}\cap\{x\cdot e_n = 0\}.
$$
For given bounded function $g: B_1^+\to \R$,  the following is true for $u$ satisfying
$$
\left\{
\begin{array}{lll}
-\mathcal{P}^+(D^2 u) \leq 0, \quad -\mathcal{P}^-(D^2 u) \geq 0 &\hbox{ in }& B_1^+;\\
\nu\cdot Du = g \hbox{ on } \Gamma.
\end{array}\right.
$$ 

\begin{itemize}
\item[(a)] $u\in C^{\alpha}(\overline{B^+_{1/2}})$ for $ 0<\alpha=\alpha(n,\lambda,\Lambda)<1$ given in Theorem~\ref{lemma-interior} with the estimate
$$
\| u\|_{C^{\alpha}(\overline{B^+_{1/2}})} \leq C(\|u\|_{L^\infty(\overline{B_1^+})}+ {\rm max}\|g\|).
$$

\item[(b)] If, in addition, $ g\in C^{\beta}(B_1^+)$ for some $0<\beta<1$ and $u$ satisfies $F(D^2u,x)=0$, then $u\in C^{1,\alpha} (\overline{B^+_{1/2}}),$
where $\alpha = \min(\alpha_0, \beta)$ with $\alpha_0$ given in (a). Moreover, we have the estimate
$$
\| u\|_{C^{1,\alpha}(\overline{B^+_{1/2}})} \leq C(\|u\|_{L^\infty(\overline{B_1^+})}+ \|g\|_{C^\beta(\Gamma)}),
$$
where $C$ is a constant depending only on $n, \lambda, \Lambda$ and $\alpha$.
\end{itemize}
\end{theorem}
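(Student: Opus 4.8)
The plan is to adapt the argument of Milakis--Silvestre \cite{MS}, organized around two observations. First, the extremal operators $\mathcal{P}^{\pm}$ depend only on the eigenvalues of their argument and are therefore invariant under the reflection $M\mapsto RMR$, $R=\mathrm{diag}(1,\dots,1,-1)$; this makes an even-reflection argument available for part (a). Second, the $x$-dependence of $F$ will enter the blow-up arguments only through \emph{slowly} oscillating operators of the form $F(\,\cdot\,,\rho^j y)$, which by (F2) differ from a frozen operator $F(\,\cdot\,,0)$ by $O(\rho^j)$ on bounded sets of matrices and hence are a negligible perturbation. The one genuinely delicate ingredient, discussed at the end, is a boundary $C^{1,\alpha}$ estimate for the \emph{constant-coefficient, homogeneous-Neumann} problem.

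\emph{Part (a).} I would first handle $g\equiv 0$ by even reflection: extend $u$ across $\Gamma$ by $\tilde u(x',x_n):=u(x',-x_n)$ for $x_n<0$. Then $\tilde u$ satisfies $-\mathcal{P}^+(D^2\tilde u)\le 0\le -\mathcal{P}^-(D^2\tilde u)$ in all of $B_1$: away from $\Gamma$ this is the reflection invariance of $\mathcal{P}^{\pm}$, while at a point of $\Gamma$ one replaces a touching $C^2$ function by its even part (whose normal derivative vanishes there) and adds an infinitesimal normal tilt, so that the homogeneous Neumann condition is strictly violated and hence forces the interior extremal inequality. Theorem~\ref{lemma-reg} then yields $\tilde u\in C^{\alpha}(\overline{B_{3/4}})$, hence $u\in C^{\alpha}(\overline{B^+_{3/4}})$ with the stated bound when $g\equiv0$. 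For general bounded $g$, the next step is the standard improvement-of-oscillation lemma: after normalization, if $\operatorname{osc}_{B_1^+}u\le1$ and $\|g\|_{L^\infty(\Gamma)}$ is sufficiently small then $\operatorname{osc}_{B^+_\rho}u\le\theta$ for fixed $\rho,\theta\in(0,1)$ depending only on $n,\lambda,\Lambda$. This can be proved by compactness: a sequence of would-be counterexamples $u_k$ (with $\|g_k\|_{L^\infty}\to0$) is, by the interior estimate and a classical Neumann barrier at $\Gamma$, equicontinuous up to $\overline{B^+_{3/4}}$, so a subsequence converges uniformly to a solution of the Pucci inequalities with zero Neumann data, which by the first step is $C^\alpha$ and hence has vanishing oscillation at small scales --- a contradiction. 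Iterating this lemma at the scales $\rho^j$ about an arbitrary $z_0\in\Gamma$ --- rescaling $u(z_0+\rho^j\,\cdot\,)$ multiplies the Neumann norm by $\rho^j$ while the oscillation contracts geometrically, so the smallness is kept --- gives a Hölder bound at boundary points; together with the interior estimate and the usual normalization this yields (a), after harmlessly decreasing the exponent to the one of Theorem~\ref{lemma-interior}.

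\emph{Part (b).} Fix $z_0\in\Gamma$ and translate so that $z_0=0$. The affinely corrected function $v:=u-\bigl(u(0)-g(0)x_n\bigr)$ still solves $F(D^2v,x)=0$ (the subtracted part being affine), vanishes at $0$, and has $\nu\cdot Dv=g(x)-g(0)$ on $\Gamma$ with $|\nu\cdot Dv|\le[g]_{C^\beta(\Gamma)}\,|x|^\beta$ there. The core is an improvement-of-flatness iteration: there are $\rho\in(0,\tfrac12)$ and affine functions $L_j$ ($L_0\equiv0$) with $\|v-L_j\|_{L^\infty(B^+_{\rho^j})}$ and $\|L_{j+1}-L_j\|_{L^\infty(B^+_{\rho^j})}$ both at most $C\rho^{j(1+\alpha)}$, $\alpha=\min(\alpha_0,\beta)$, whence $L_j\to L_\infty$ affine with $\|v-L_\infty\|_{L^\infty(B^+_r)}\le Cr^{1+\alpha}$, i.e. $u$ is $C^{1,\alpha}$ at $0$; a covering over $z_0\in\Gamma$, the interior bound of Theorem~\ref{lemma-interior}, and the normalization in $\|u\|_{L^\infty(B_1^+)}+\|g\|_{C^\beta(\Gamma)}$ then give the estimate. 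For the inductive step, rescale $w(y):=\rho^{-j(1+\alpha)}(v-L_j)(\rho^j y)$; by homogeneity (F1) this solves $F(D^2w,\rho^j y)=0$ in $B_1^+$ with $\|w\|_{L^\infty(B_1^+)}\le C$ and $\nu\cdot Dw=O(\rho^{j(\beta-\alpha)})$ on $\Gamma$. Since $F(\,\cdot\,,\rho^j y)$ is an $O(\rho^j)$-perturbation of $F(\,\cdot\,,0)$, a comparison/barrier argument (Theorem~\ref{thm:comp}), or a further compactness argument, shows that on $B^+_{1/2}$ the function $w$ is $L^\infty$-close to a solution $h$ of the frozen homogeneous-Neumann problem
\[
F(D^2h,0)=0\ \text{ in }B_1^+,\qquad \nu\cdot Dh=0\ \text{ on }\Gamma.
\]
Boundary $C^{1,\alpha_0}$ regularity of $h$ provides an affine $\ell$ with $\|h-\ell\|_{L^\infty(B^+_\rho)}\le C\rho^{1+\alpha_0}$; setting $L_{j+1}:=L_j+\rho^{j(1+\alpha)}\,\ell(\,\cdot\,/\rho^j)$ and choosing $\rho$ small closes the induction. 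The two caps on the exponent appear here: $\alpha_0$ from the regularity of $h$ and $\beta$ from the decay of the Neumann mismatch; reaching the endpoint $\alpha=\min(\alpha_0,\beta)$ (and not merely every exponent below it) requires iterating the flatness and the Neumann mismatch together, as in \cite{MS}.

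\emph{Where the difficulty lies.} The step I expect to be genuinely hard --- and would invoke from \cite{MS} rather than reprove --- is the boundary $C^{1,\alpha}$ estimate for the frozen homogeneous-Neumann problem used above. Here even reflection does \emph{not} reduce matters to the Pucci inequalities, since a general uniformly elliptic $F$ need not be invariant under $M\mapsto RMR$. One instead argues that, for each direction $e$ tangent to $\Gamma$, the tangential derivative $\partial_e h$ does, after even reflection across $\Gamma$ (using that $\partial_n h=0$ on $\Gamma$ forces $\partial_n\partial_e h=0$ there), satisfy the two Pucci inequalities in $B_1$ and is therefore $C^{\alpha}$ up to $\Gamma$, after which the equation $F(D^2h,0)=0$ recovers the missing normal--normal second derivative. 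By contrast, neither the fully nonlinear character of $F$ (handled through the extremal inequalities \eqref{operator} and the cited interior estimates) nor its $x$-dependence (a small, slowly oscillating perturbation in each blow-up, by (F2)) poses a serious obstacle.
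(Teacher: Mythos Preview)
The paper does not supply a proof of this theorem: it is quoted as Theorem~8.2 of \cite{MS} (adapted to the present setting) and used as a black box, so there is nothing in the paper to compare your argument against. Your sketch is a reasonable outline of the Milakis--Silvestre strategy itself, and you correctly identify that the substantive issue is the boundary $C^{1,\alpha}$ estimate for the frozen homogeneous-Neumann problem, which the paper simply imports.
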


\subsection{Localization}
Here we discuss how to localize solutions of Neumann-boundary problems. For given $\nu\in\mathcal{S}^{n-1}$, let us define
$$
\Pi_{\nu}(0):=\{ x:-1\leq  x\cdot\nu \leq 0\}, \quad\Gamma_N = \{x\cdot\nu = 0\}, \quad\Gamma_D = \{x\cdot\nu = -1\}.
$$

\begin{lemma}\label{localization}
For given $R,\e>0$, suppose $u:\overline{\Pi_{\nu}(0)}\to \R$ solves the following:
$$
\left\{\begin{array}{ll}
-\mathcal{P}^+(D^2 u) \leq 0, \quad -\mathcal{P}^-(D^2 u)\geq 0 &\hbox{ in } \Pi_\nu(0)\cap\{|x|\leq R\},\\ \\
\partial_{\nu} u = g(x) &\hbox{ on } \Gamma_N\cap\{|x|\leq R\};\\ \\
u=f(x) &\hbox{ on } \Gamma_D\cap\{|x|\leq R\},\\ \\
|u|(x) \leq R^{2-\e} &\hbox{ on } |x|=R.
\end{array}\right.
$$
 If  $|f(x)|,|g(x)|\leq \delta$, then we have
$$
|u(x)| \leq 2\delta+16(n-1)\frac{\Lambda}{\lambda}R^{-\e} \hbox{ in } \Pi_{\nu}(0) \cap\{|x|\leq 1\}.
$$
\end{lemma}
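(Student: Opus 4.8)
The plan is to prove the estimate by a single comparison argument against an explicit polynomial barrier, using the Pucci duality $\mathcal{P}^+(-M)=-\mathcal{P}^-(M)$ to obtain the two-sided bound at once. Set $\Sigma:=\Pi_\nu(0)\cap\{|x|<R\}$; we may assume $R\ge 1$, since otherwise the hypotheses do not control $u$ on all of $\Pi_\nu(0)\cap\{|x|\le 1\}$. Decompose $x=(x\cdot\nu)\nu+y$ with $y\perp\nu$, and take
\[
w(x):=\delta\big(x\cdot\nu+2\big)+R^{-\e}\Big(|y|^2+\tfrac{\Lambda(n-1)}{\lambda}\big(1-(x\cdot\nu)^2\big)+1\Big).
\]
First I would check that $w$ is a supersolution: $D^2w=R^{-\e}\big(2(I-\nu\otimes\nu)-\tfrac{2\Lambda(n-1)}{\lambda}\,\nu\otimes\nu\big)$ has eigenvalues $2R^{-\e}$ with multiplicity $n-1$ and $-\tfrac{2\Lambda(n-1)}{\lambda}R^{-\e}$ once, so $\mathcal{P}^+(D^2w)=2\Lambda(n-1)R^{-\e}+\lambda\big(-\tfrac{2\Lambda(n-1)}{\lambda}R^{-\e}\big)=0\le 0$; hence $-\mathcal{P}^+(D^2w)\ge 0$, and likewise $-\mathcal{P}^-(D^2(-w))=\mathcal{P}^+(D^2w)\le 0$, both classically.

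Next I would verify the boundary inequalities so that Theorem~\ref{thm:comp} applies on $\Sigma$, with $\Gamma_N\cap B_R$ as the Neumann part and $\big(\Gamma_D\cap\overline{B_R}\big)\cup\big(\partial B_R\cap\overline{\Pi_\nu(0)}\big)$ as the Dirichlet part. The linear term carries the slack on the flat faces: on $\Gamma_D$ one has $x\cdot\nu=-1$, so $w\ge\delta\ge f$ there; on $\Gamma_N$ one has $x\cdot\nu=0$, and since the normal derivative of the quadratic part equals $-2\tfrac{\Lambda(n-1)}{\lambda}R^{-\e}(x\cdot\nu)$, which vanishes on $\Gamma_N$, we get $\partial_\nu w=\delta\ge g$. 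On $\partial B_R$, using $|y|^2=R^2-(x\cdot\nu)^2$ and $|x\cdot\nu|\le 1$, the bracket is at least $(R^2-1)+0+1=R^2$, so $w\ge R^{2-\e}\ge u$. Thus the comparison principle applied with $F=-\mathcal{P}^+$ gives $u\le w$ in $\Sigma$, and applied with $F=-\mathcal{P}^-$ and the subsolution $-w$ (for which $-w\le-\delta\le f$ on $\Gamma_D$, $-w\le-R^{2-\e}\le u$ on $\partial B_R$, and $\partial_\nu(-w)=-\delta\le g$ on $\Gamma_N$) gives $-w\le u$ in $\Sigma$. Hence $|u|\le w$ in $\Sigma$, and for $|x|\le 1$ we have $x\cdot\nu\in[-1,0]$, $|y|^2\le 1$ and $1-(x\cdot\nu)^2\le 1$, so $w\le 2\delta+R^{-\e}\big(2+\tfrac{\Lambda(n-1)}{\lambda}\big)\le 2\delta+16(n-1)\tfrac{\Lambda}{\lambda}R^{-\e}$, using $\Lambda\ge\lambda$ and $n\ge 2$ (so $\tfrac{\Lambda(n-1)}{\lambda}\ge 1$); this is the asserted bound.

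The point I would treat with the most care is not the barrier but the legitimacy of the comparison on the mixed, cornered domain $\Sigma$: one must read the spherical cap $\partial B_R\cap\overline{\Pi_\nu(0)}$ as Dirichlet data and confirm that the corner where the flat Neumann face $\Gamma_N$ meets the sphere transversally does not obstruct Theorem~\ref{thm:comp}; this is routine but should be recorded. It is also worth flagging the conceptual reason the barrier can grow at all: any $\mathcal{P}^+$-supersolution depending on $|y|$ alone stays bounded and so cannot dominate the $R^{2-\e}$ data on the cap, but the width-$1$ slab lets us add the bounded concave correction $-\tfrac{\Lambda(n-1)}{\lambda}(x\cdot\nu)^2$, whose single negative Hessian eigenvalue — weighted by $\lambda$ in $\mathcal{P}^+$ — exactly offsets the $\Lambda$-weighted positive eigenvalues of $|y|^2$, and this correction is harmless precisely because $x\cdot\nu$ ranges over the unit interval $[-1,0]$.
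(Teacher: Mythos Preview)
Your proof is correct and follows essentially the same approach as the paper: construct an explicit quadratic barrier $w$ with $\mathcal{P}^+(D^2w)=0$ that dominates the data on $\Gamma_D$, $\Gamma_N$, and $\partial B_R$, then apply the comparison principle twice to get $|u|\le w$. The paper's barrier,
\[
\varphi(x)=\Big(\delta+4(n-1)\tfrac{\Lambda}{\lambda}R^{-\e}\Big)(x_n+2)+\tfrac{2}{R^\e}\Big(|x'|^2-(n-1)\tfrac{\Lambda}{\lambda}((x_n+1)^2-1)\Big),
\]
differs from yours only in that its concave correction is centered at $x_n=-1$ rather than $x_n=0$, forcing an extra $R^{-\e}$ term in the linear coefficient to restore $\partial_\nu\varphi\ge\delta$ on $\Gamma_N$; your choice $1-(x\cdot\nu)^2$ avoids this and in fact yields the sharper constant $3(n-1)\Lambda/\lambda$ in place of $16(n-1)\Lambda/\lambda$.
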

\begin{proof}
 Without loss of generality, let us  set $\nu=e_n$, $x' = (x_1,...,x_{n-1})$, .and consider the function
$$
\varphi(x) =(\delta+4(n-1)\frac{\Lambda}{\lambda}R^{-\e}) (x_n+2) +\frac{2}{R^{\e}}{\Large(}|x'|^2-(n-1)\frac{\Lambda}{\lambda} ((x_n+1)^2-1){\Large)}.
$$
 Then  $-\mathcal{P}^+(D^2\varphi) =0$, and $\varphi\geq \delta $ on $\{x_n=-1\}$, $\varphi \geq R^{2-\e}$ on $|x|=R$, and $\partial_{e_n} \varphi \geq \delta $ on $\{x_n=0\}$.  Therefore by comparison principle (Theorem~\ref{thm:comp}) we have 
$$
u \leq \varphi \hbox{ in } \Pi_\nu(0) \cap\{|x|\leq R\}.
$$
Similarly one can show that $u \geq -\varphi$ in $\Pi_\nu(0)\cap \{|x|\leq R\}$.  Now  we conclude by evaluating $|\varphi|$ in the region $\Pi_{\nu}(0) \cap\{|x|\leq 1\}$.

\end{proof}

The following holds as a corollary of above lemma with $\delta=0$:

\begin{corollary}\label{localization2}
Let  $\omega$ satisfy the following in the viscosity sense:
\begin{itemize}
\item[(a)] $-\mathcal{P}^+(D^2\omega) \leq 0$, $-\mathcal{P}^-(D^2\omega) \geq 0$ in $\Sigma_R:= \Pi_\nu(0) \cap \{|x'| \leq R\}$.
\item[(b)] $\partial\omega/\partial\nu =0$ on $\Gamma_N \cap \Sigma_R$;
\item[(c)] $\omega=0$ on $\Gamma_D\cap\Sigma_R$;
\item[(d)] $|\omega| \leq R^{2-\e}$.
\end{itemize}

Then we have
$$
|\omega| \leq C R^{-\e} \hbox{ in } \Pi_{\nu}(0) \cap\{|x'| \leq 1\}.
$$
\end{corollary}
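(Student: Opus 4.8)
The plan is to re-run the barrier argument from the proof of Lemma~\ref{localization}, now specialized to $\delta=0$ and to vanishing boundary data $f\equiv g\equiv 0$; in effect the corollary is nothing but that lemma applied with $\delta=0$, and the only thing that needs attention is the small mismatch between the cylinder $\{|x'|\le R\}$ used here and the ball $\{|x|\le R\}$ used there (and likewise on scale $1$). This mismatch is harmless since $|x\cdot\nu|\le 1$ on the whole strip $\Pi_{\nu}(0)$. First I would dispose of bounded $R$: if $R$ is bounded then $|\omega|\le R^{2-\e}=R^{2}R^{-\e}\le CR^{-\e}$ already, so we may assume $R\ge 2$. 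Normalizing $\nu=e_n$ and writing $x=(x',x_n)$, I would then observe that hypotheses (a)--(d) imply all the hypotheses of Lemma~\ref{localization} with radius $R$: since $|x'|\le|x|$ we have $\Pi_{\nu}(0)\cap\{|x|\le R\}\subseteq \Sigma_R$, so the equation and the two boundary conditions hold where needed, and the cap $\Pi_{\nu}(0)\cap\{|x|=R\}$ is contained in $\Sigma_R$, so $|\omega|\le R^{2-\e}$ holds on it.

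I would then recall the barrier from that proof,
$$
\varphi(x)=4(n-1)\frac{\Lambda}{\lambda}R^{-\e}(x_n+2)+\frac{2}{R^{\e}}\Bigl(|x'|^{2}-(n-1)\frac{\Lambda}{\lambda}\bigl((x_n+1)^{2}-1\bigr)\Bigr),
$$
and verify, exactly as there, that $D^2\varphi$ is the constant diagonal matrix with value $4R^{-\e}$ in each of the $n-1$ directions $x_1,\dots,x_{n-1}$ and value $-4(n-1)\Lambda\lambda^{-1}R^{-\e}$ in the $x_n$ direction, so that $-\mathcal{P}^{+}(D^2\varphi)=0$ and hence also $-\mathcal{P}^{-}(D^2(-\varphi))=\mathcal{P}^{+}(D^2\varphi)=0$; that $\varphi\ge 0$ on $\Gamma_D$; that $\partial_{e_n}\varphi=0$ on $\Gamma_N$; and that $\varphi\ge R^{2-\e}$ on $\{|x|=R\}$ once $R\ge 2$ (on that cap all three contributions to $\varphi$ are nonnegative and $|x'|^{2}\ge R^{2}-1\ge R^{2}/2$). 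Treating the spherical cap as part of the Dirichlet boundary, two applications of the comparison principle (Theorem~\ref{thm:comp}) then give the interior bound: applied to the pair $(\omega,\varphi)$ with the operator $-\mathcal{P}^{+}$ it yields $\omega\le\varphi$, and applied to the pair $(-\varphi,\omega)$ with the operator $-\mathcal{P}^{-}$ it yields $-\varphi\le\omega$; hence $|\omega|\le\varphi$ throughout $\Pi_{\nu}(0)\cap\{|x|\le R\}$.

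It then remains to evaluate $\varphi$ on the target set. On $\Pi_{\nu}(0)\cap\{|x'|\le 1\}$ one has $|x'|\le 1$ and $-1\le x_n\le 0$, so $|x_n+2|\le 2$ and $|(x_n+1)^{2}-1|\le 1$; moreover this set lies inside $\{|x|\le\sqrt{2}\}\subseteq\{|x|\le R\}$, so the bound $|\omega|\le\varphi$ is valid there and the explicit formula gives $|\varphi|\le CR^{-\e}$ with $C$ depending only on $n,\lambda,\Lambda$. Together with the bounded-$R$ case this is the claimed estimate. I do not expect any essential obstacle: the statement is a routine consequence of Lemma~\ref{localization}, and the only points requiring any care are the bookkeeping between the two regions described above and keeping track of the ellipticity- and dimension-dependent constant.
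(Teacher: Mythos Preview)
Your argument is correct and matches the paper's approach: the paper simply asserts that the corollary follows from Lemma~\ref{localization} with $\delta=0$, and you have carefully filled in the details, including the minor bookkeeping between the ball $\{|x|\le R\}$ and the cylinder $\{|x'|\le R\}$ that the paper leaves implicit.
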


From Corollay~\ref{localization2}, the following holds.

\begin{theorem}[general comparison]\label{general:cp}
Suppose $u,v:\overline{\Pi_\nu(0)}\to \R$ satisfies the following in the viscosity sense:
\begin{itemize}
\item[(a)] $F(D^2u,x) \leq 0\leq F(D^2 v,x) $ in $\Pi_{\nu}(0)$;
\item[(b)] $u\leq v$ on $\Gamma_D$, \quad $|u|,|v| \leq |x|^{2-\e}$ \hbox{ for large} $|x|$;
\item[(c)] $\frac{\partial u}{\partial\nu} \leq f(x) \leq \frac{\partial v}{\partial\nu}$ on $ \Gamma_N$,
\end{itemize}
where $f(x):\RR^n\to\RR$ is continuous. Then $u\leq v$ in $\Pi_{\nu}(0)$.
\end{theorem}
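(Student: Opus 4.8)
The plan is to cut the slab off at radius $R$, compare on the resulting bounded domain via Theorem~\ref{thm:comp} against an explicit barrier, and let $R\to\infty$; the barrier is exactly the one already constructed in the proof of Lemma~\ref{localization}.

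First I would set $w:=u-v$ and reduce the claim to $w\le 0$ on $\overline{\Pi_\nu(0)}$. Since $u$ is a subsolution and $v$ a supersolution of $F(D^2\cdot,x)=0$, the inequalities \eqref{operator} show that $w$ is a viscosity subsolution of the Pucci maximal equation $-\mathcal{P}^+(D^2 w)\le 0$ in $\Pi_\nu(0)$, with $w\le 0$ on $\Gamma_D$, $\partial_\nu w\le 0$ on $\Gamma_N$ in the viscosity sense, and $|w|\le 2|x|^{2-\e}$ for large $|x|$. After a rotation I may assume $\nu=e_n$, so that $\Pi_\nu(0)=\{-1\le x_n\le 0\}$, $\Gamma_N=\{x_n=0\}$, $\Gamma_D=\{x_n=-1\}$. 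For each large $R$ I would take
$$
\varphi_R(x):=\frac{2C_0(n-1)\Lambda}{\lambda R^\e}\,(x_n+2)+\frac{C_0}{R^\e}\Big(|x'|^2-(n-1)\tfrac{\Lambda}{\lambda}\big((x_n+1)^2-1\big)\Big),
$$
i.e.\ the barrier of Lemma~\ref{localization} with $\delta=0$, but with the free constant $C_0$ now chosen $\ge 4$ (this is the one-sided counterpart of Corollary~\ref{localization2}). As in that proof one checks $-\mathcal{P}^+(D^2\varphi_R)=0$ in $\Pi_\nu(0)$, $\varphi_R\ge 0$ on $\Gamma_D$, $\partial_\nu\varphi_R=0$ on $\Gamma_N$, and $\varphi_R\ge \tfrac{C_0}{2}R^{2-\e}\ge 2R^{2-\e}$ on $\{|x|=R\}\cap\Pi_\nu(0)$ for $R$ large.

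Next I would apply Theorem~\ref{thm:comp} on $\Sigma_R:=\Pi_\nu(0)\cap\{|x|\le R\}$, taking $\Gamma_N\cap\Sigma_R$ as Neumann boundary (data $\equiv 0$) and $(\Gamma_D\cap\Sigma_R)\cup(\{|x|=R\}\cap\Pi_\nu(0))$ as Dirichlet boundary. With $F=-\mathcal{P}^+$ the pair $(w,\varphi_R)$ satisfies all hypotheses: $-\mathcal{P}^+(D^2 w)\le 0\le-\mathcal{P}^+(D^2\varphi_R)$, $w\le 0\le\varphi_R$ on $\Gamma_D\cap\Sigma_R$, $w\le|w|\le 2|x|^{2-\e}\le\varphi_R$ on $\{|x|=R\}\cap\Pi_\nu(0)$, and $\partial_\nu w\le 0=\partial_\nu\varphi_R$ on $\Gamma_N\cap\Sigma_R$. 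Hence $w\le\varphi_R$ in $\Sigma_R$; fixing $x\in\Pi_\nu(0)$ and letting $R\to\infty$ (using $\varphi_R(x)\le C(x)R^{-\e}\to 0$) gives $w(x)\le 0$, i.e.\ $u\le v$ in $\Pi_\nu(0)$.

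The only step that is not a routine barrier computation, and hence the main obstacle, is the reduction: that $w=u-v$ is a viscosity subsolution of the Pucci maximal equation together with the homogeneous Neumann condition on $\Gamma_N$. In the interior this is the usual sum-of-solutions lemma; at $\Gamma_N$ it is the standard fact from the viscosity theory of Neumann problems, following from \eqref{operator} and the Jensen--Ishii lemma. If one prefers to bypass it, one can instead keep $u$ and $v$ separate and note, again via \eqref{operator}, that $v+\varphi_R$ is a supersolution of $F(D^2\cdot,x)=0$ in $\Sigma_R$ with $\partial_\nu(v+\varphi_R)\ge f$ on $\Gamma_N\cap\Sigma_R$ and $v+\varphi_R\ge u$ on the Dirichlet boundary of $\Sigma_R$, so that Theorem~\ref{thm:comp} applied to $(u,v+\varphi_R)$ gives $u\le v+\varphi_R$ in $\Sigma_R$ and the same limit. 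This last formulation also makes transparent why strictly sub-quadratic growth is assumed in (b): $\varphi_R$ grows like $|x'|^2R^{-\e}$, so it dominates $|w|\lesssim|x|^{2-\e}$ on $\{|x|=R\}$ while still vanishing on compact sets as $R\to\infty$.
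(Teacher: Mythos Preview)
Your main approach is correct and is exactly what the paper intends: the paper's entire proof is the sentence ``From Corollary~\ref{localization2}, the following holds,'' and your argument is precisely the one-sided version of that corollary applied to $w=u-v$, followed by sending $R\to\infty$. Your identification of the sum-of-viscosity-solutions step (that $w$ is a subsolution of $-\mathcal P^+=0$ with $\partial_\nu w\le 0$ on $\Gamma_N$, via the Jensen--Ishii lemma and \eqref{operator}) as the only nontrivial point is apt and in fact more careful than what the paper says.

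One caveat on your \emph{alternative} formulation: with the specific barrier $\varphi_R$ you wrote (which is tuned so that $-\mathcal P^+(D^2\varphi_R)=0$), the function $v+\varphi_R$ is \emph{not} in general a supersolution of $F(D^2\cdot,x)=0$. From \eqref{operator} one gets $F(D^2\psi,x)\le F(D^2(\psi-\varphi_R),x)-\mathcal P^-(D^2\varphi_R)$ for any test function $\psi$, so one needs $\mathcal P^-(D^2\varphi_R)\ge 0$; but the eigenvalues of $D^2\varphi_R$ are $2C_0R^{-\e}$ (multiplicity $n-1$) and $-2C_0(n-1)\tfrac{\Lambda}{\lambda}R^{-\e}$, giving $\mathcal P^-(D^2\varphi_R)=\tfrac{2C_0(n-1)}{\lambda R^\e}(\lambda^2-\Lambda^2)<0$. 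This does not affect your proof, since the first route through $w$ and $-\mathcal P^+$ is sound; and the alternative is easily repaired by swapping $\tfrac{\Lambda}{\lambda}$ for $\tfrac{\lambda}{\Lambda}$ in the $(x_n+1)^2$ term (and adjusting the linear coefficient accordingly) so that $-\mathcal P^-(D^2\varphi_R)=0$ instead.
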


\subsection{Homogenization of the operator}

Here we state the homogenization result obtained in \cite{CS}.

\begin{theorem}[\cite{CS}]\label{original_convergence}
Let $F$ satisfy $(F1)-(F4)$. Then there exists a unique operator $\bar{F}(D^2u):\mathcal{M}^n\to \R^n$ satisfying $(F1)-(F3)$ such that the following is true:

\medskip

Let $\Sigma$ be a bounded domain in $\R^n$ whose boundary is $C^1$ and consists of two nonempty parts $\Gamma_D$ and $\Gamma_N$, and suppose that $g,f:\R^n\to \R$ are continuous. If $u^\e$ solves
$$
\left\{\begin{array}{ll}
F(D^2u^\e, \frac{x}{\e})=0 &\hbox{ in } \Sigma;\\ \\
\partial_{\nu} u^\e = g(x) &\hbox{ on } \Gamma_N;\\ \\
u^\e=f(x) &\hbox{ on } \Gamma_D
\end{array}\right.
$$
Then $u^\e$ uniformly converges to the solution $u$ of
$$
\left\{\begin{array}{ll}
\bar{F}(D^2u)=0 &\hbox{ in } \Sigma,\\ \\
\partial_{\nu} u = g(x) &\hbox{ on } \Gamma_N;\\ \\
u=f(x) &\hbox{ on } \Gamma_D
\end{array}\right.
$$
Moreover there exists $0<\alpha<1$ such that
\begin{equation}\label{estimate100}
|u^\e-u| \leq C\e^{\alpha}.
\end{equation}
\end{theorem}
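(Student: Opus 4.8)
The plan is to follow the standard viscosity-solution approach to periodic homogenization of fully nonlinear uniformly elliptic equations (Evans' perturbed test function method together with the Caffarelli--Souganidis quantitative scheme): construct the effective operator $\bar{F}$ through an ergodic cell problem, prove convergence by perturbed test functions, handle the two boundary pieces by barriers and boundary correctors, and finally upgrade the qualitative statement to the algebraic rate by a dyadic comparison iteration. \emph{Defining $\bar{F}$:} for fixed $M\in\mathcal{M}^n$ and $\delta>0$, let $v^\delta=v^\delta_M$ be the unique $\Z^n$-periodic viscosity solution of $\delta v^\delta+F(M+D^2v^\delta,y)=0$ in $\R^n$ (existence and uniqueness from (F1)--(F4), Perron's method and the comparison principle). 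Comparison with constants and (F2)--(F3) give $\|\delta v^\delta\|_{L^\infty}\le C(1+\|M\|)$, and since $v^\delta$ solves a uniformly elliptic equation with bounded right-hand side $\delta v^\delta$, Theorem~\ref{lemma-reg} bounds $\mathrm{osc}\,v^\delta$ uniformly in $\delta$. Hence $-\delta v^\delta(0)$ has a limit along subsequences, and comparing $v^\delta$ with $v^{\delta'}$ shows the limit is unique; set $\bar{F}(M):=-\lim_{\delta\to0}\delta v^\delta_M(0)$. Rescaling $M\mapsto tM$, $v\mapsto tv$ gives (F1); comparing cell problems for $M$ and $M'$, resp.\ for $M$ and $M+N$ with $N\ge0$, and using \eqref{operator}, gives the Lipschitz bound (F2) and the ellipticity bounds (F3) with the \emph{same} $\lambda,\Lambda$.

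\emph{Convergence.} The family $\{u^\e\}$ is uniformly bounded: build planar/radial $\mathcal{P}^\pm$-super- and subsolutions matching $f$ on $\Gamma_D$ with controlled slope on $\Gamma_N$ and apply the comparison principle. Theorem~\ref{lemma-reg} then yields interior equi-continuity and Theorem~\ref{thm:reg2}(a) equi-continuity up to $\Gamma_N$, so a subsequence converges locally uniformly on $\overline{\Sigma}\setminus\Gamma_D$ to some $u$. To identify $u$, apply the perturbed test function method: if $\phi\in C^2$ touches $u$ from above at an interior point $x_0$ with $\bar{F}(D^2\phi(x_0))>0$, take the $\delta$-corrector $w=v^\delta_{D^2\phi(x_0)}$ with $\delta=\delta(\e)\to0$ tuned so that $\|\e^2 w(\cdot/\e)\|_{L^\infty}\to0$ while $F(D^2\phi(x_0)+D^2w,\cdot)=\bar{F}(D^2\phi(x_0))+o(1)$; then $x\mapsto\phi(x)+\e^2 w(x/\e)$, after a harmless quadratic correction, is a strict subsolution of $F(D^2\cdot,x/\e)=0$ near $x_0$, contradicting that $u^\e$ lies below it. Thus $\bar{F}(D^2u)\le0$, and symmetrically $\ge0$, so $\bar{F}(D^2u)=0$ in $\Sigma$.

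\emph{Boundary data.} On the fixed $C^1$ surface $\Gamma_D$, for each $z\in\Gamma_D$ and $\eta>0$ sandwich $u^\e$ between flat graphs plus small quadratics (which are $\mathcal{P}^\pm$-super/subsolutions) equal to $f(z)\pm\eta$ near $z$; letting $\e\to0$ and then $\eta\to0$ gives $u=f$ on $\Gamma_D$. On $\Gamma_N$ the datum $g$ does not oscillate, so the only oscillation there is that of $F$: the perturbed test function argument with \emph{boundary} test functions goes through (adding $\e^2 w(x/\e)$ perturbs the normal derivative only by $O(\e)$, and Theorem~\ref{thm:reg2} justifies contact at boundary points), so $\partial_\nu u^\e=g$ passes to $\partial_\nu u=g$. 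Uniqueness for the homogenized problem, via Theorem~\ref{thm:comp} applied to the $x$-independent $\bar{F}$, then promotes subsequential convergence to convergence of the whole family.

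\emph{Rate.} For \eqref{estimate100} I would quantify the convergence step. The limit $u$ has interior $C^{1,\alpha_0}$ regularity (Theorem~\ref{lemma-interior} for $\bar{F}$) and, near $\Gamma_N$, boundary $C^{1,\alpha}$ regularity (Theorem~\ref{thm:reg2}(b)); hence on a ball $B_\rho(x_0)$ one may replace $u$ by the quadratic $P$ given by its $2$-jet at $x_0$ up to a $C^0$-error $O(\rho^{1+\alpha})$, then compare $u^\e$ on $B_\rho(x_0)$ with $P+\e^2 w_P(x/\e)$ using the quantitative corrector bounds. This yields an improvement-of-oscillation inequality $\|u^\e-u\|_{L^\infty(B_\rho)}\le C\rho^{1+\alpha}+C(\e/\rho)^{\gamma}$ (plus smaller terms), valid also at boundary balls; iterating over dyadic scales and optimizing the intermediate scale $\rho=\e^\theta$ gives $\|u^\e-u\|_{L^\infty}\le C\e^\alpha$. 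This last step is where the genuine difficulty lies: since exact correctors need not exist, one must control $\mathrm{osc}\,v^\delta$ and $\|F(M+D^2v^\delta,\cdot)-\bar{F}(M)\|$ \emph{simultaneously} as $\delta\to0$ — this is the heart of the Caffarelli--Souganidis estimate — and the iteration must be carried up to $\Gamma_N$, where the boundary regularity of Theorem~\ref{thm:reg2} is indispensable; the Dirichlet side contributes only an $O(\e^{1/2})$ boundary-layer term from the barriers, which is absorbed into $C\e^\alpha$.
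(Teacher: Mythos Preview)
The paper does not prove this theorem at all: it is stated with the citation \cite{CS} and used as a black box (the only follow-up is the remark extending it to the unbounded strip via Lemma~\ref{general:cp}, and the compactness corollary Theorem~\ref{ext}). So there is no ``paper's own proof'' to compare against; your proposal is a sketch of how the cited result is established in the literature.

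That said, your outline is essentially the correct one and tracks the Caffarelli--Souganidis scheme. A few remarks. First, the original \cite{CS} treats the Dirichlet problem; the mixed Dirichlet/Neumann version stated here is a mild extension, and your handling of the Neumann piece (perturbed test function at boundary points, with the corrector contributing only $O(\e)$ to the normal derivative) is the right idea, though in a full proof one should be a bit more careful at the $\Gamma_D\cap\Gamma_N$ interface on a $C^1$ boundary. Second, in the rate step you invoke $C^{1,\alpha}$ regularity of the limit $u$ and then speak of replacing $u$ by its $2$-jet; but $C^{1,\alpha}$ only gives a first-order expansion with $O(\rho^{1+\alpha})$ remainder, not a quadratic $P$. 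The actual Caffarelli--Souganidis iteration compares $u^\e$ to \emph{approximate} quadratic polynomials produced inductively (not to Taylor expansions of $u$), and the $\delta$-corrector enters precisely to absorb the Hessian mismatch at each scale --- your description conflates these two roles. This is the genuine technical heart of the rate estimate, as you yourself flag, so if you were to write this out you would need to set up the dyadic comparison with approximate polynomials rather than with jets of $u$.
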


\begin{remark}
By Lemma~\ref{general:cp}, the theorem is valid in the setting of the unbounded domain $\Omega=\Pi_{\nu}(0)$ with $\Gamma_D=\{x\cdot\nu=-1\}$, $\Gamma_N=\{x\cdot\nu=0\}$ and with uniformly bounded $u^\e$, though in this setting we would not have the quantitative estimate \eqref{estimate100}.

\end{remark}

Using above result we can apply compactness arguments to show the following.

\begin{theorem}\label{ext}
Let $K$ be a positive constant and let $f:\R^n\to\R$ be bounded and H\"{o}lder continuous.  For given $\nu\in\mathcal{S}^{n-1}$, let $u_N:\{-K\leq x\cdot\nu\leq 0\}\to \R$ be the unique bounded viscosity solution of  
$$
\left\{\begin{array}{lll}
F(D^2 u_N, Nx)=0 &\hbox{ in }& \{-K\leq x\cdot\nu \leq 0\} \\ \\
\partial_{\nu} u_N = f(x) &\hbox{ on }& \{x\cdot\nu=0\},\\ \\
u = 1 &\hbox{ on } & \{x\cdot\nu=-K\} 
\end{array}\right.\leqno(P_N).
$$
Then for any $\delta>0$, there exists $N_0$ only depending on $K$, the bound and H\"{o}lder exponent of $f$ such that
\begin{equation}\label{conv}
|u_N - \bar{u}|(x) \leq \delta \hbox{ in } \{|x|\leq K\} \quad\hbox{ for } N \geq N_0,
\end{equation}
where $\bar{u}$ is the unique bounded viscosity solution of 
$$
\left\{\begin{array}{lll}
\bar{F}(D^2 \bar{u})=0 &\hbox{ in }& \{-K\leq x\cdot\nu \leq 0\};\\ \\
\partial_{\nu} \bar{u} = f(x) &\hbox{ on }& \{x\cdot\nu=0\};\\ \\
u=1 &\hbox{ on } & \{x\cdot\nu = -K\}.
\end{array}\right.
$$
\end{theorem}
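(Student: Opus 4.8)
The plan is to deduce Theorem~\ref{ext} from Theorem~\ref{original_convergence} (in the unbounded-strip form of the subsequent remark) by a rescaling together with a compactness/contradiction argument. First I would normalize the geometry: set $\e = 1/N$ and consider $v^\e(x) := u_N(x/\e) = u_{1/\e}(Nx)$ reversed the other way — more precisely, let $y = x$ and rescale so that $w^\e(y):= u_N(y)$ with the substitution $z = y$ is awkward, so instead rescale the domain. Put $\Sigma := \{-K \le x\cdot\nu \le 0\}$ and let $\tilde u^\e(x) := u_N(x)$ where $N = 1/\e$; then $\tilde u^\e$ solves $F(D^2\tilde u^\e, x/\e)=0$ in $\Sigma$, $\partial_\nu \tilde u^\e = f(x)$ on $\{x\cdot\nu=0\}$, $\tilde u^\e = 1$ on $\{x\cdot\nu=-K\}$, after the change of variables $x \mapsto \e x$ carried through carefully (the key point is that $F(D^2 u_N, Nx)=0$ becomes $F(D^2 \tilde u^\e, x/\e)=0$ on the fixed strip of width $K$). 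So proving \eqref{conv} is exactly proving $\|\tilde u^\e - \bar u\|_{L^\infty(\{|x|\le K\})} \to 0$ as $\e\to 0$, uniformly over the class of data $f$ with a fixed bound and Hölder exponent.

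The second step is to establish the needed compactness. By Lemma~\ref{localization} (applied on balls $|x|\le R$ for $R$ large, together with the a priori bound $|\tilde u^\e| \le C$ coming from the barrier $\varphi$ in that lemma with $\delta = \|f\|_\infty + |1|$), the family $\{\tilde u^\e\}$ is uniformly bounded on every $\{|x|\le R\}\cap\Sigma$, with a bound depending only on $R$, $K$, and $\|f\|_\infty$. Interior Hölder regularity (Theorem~\ref{lemma-reg}) and boundary regularity up to $\Gamma_N$ and $\Gamma_D$ (Theorem~\ref{thm:reg2}(a), using that the difference-type estimates hold for $\tilde u^\e$ since $F$ is uniformly elliptic, and that $f \in C^\beta$, $1 \in C^\beta$) give uniform $C^\alpha$ bounds on $\{|x|\le R\}\cap\overline{\Sigma}$, again with constants depending only on $R,K$, and the bound and Hölder data of $f$. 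Hence, along any subsequence $\e_j\to 0$, by Arzelà–Ascoli and a diagonal argument $\tilde u^{\e_j}$ converges locally uniformly on $\overline\Sigma$ to some bounded continuous $u_*$.

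Third, I would identify the limit. By the stability of viscosity solutions under homogenization — this is precisely Theorem~\ref{original_convergence} and the remark following it, applied on the unbounded strip $\Pi_\nu$ with $\Gamma_D = \{x\cdot\nu = -K\}$, $\Gamma_N=\{x\cdot\nu=0\}$ and uniformly bounded solutions — the limit $u_*$ must be a bounded viscosity solution of $\bar F(D^2 u_*)=0$ in $\Sigma$ with $\partial_\nu u_* = f$ on $\Gamma_N$ and $u_*=1$ on $\Gamma_D$. (Strictly, one can either invoke that remark directly, or re-run the perturbed-test-function argument on each compact piece and then pass $R\to\infty$.) By the comparison principle for the homogenized problem — Theorem~\ref{general:cp} applied to $\bar F$, which satisfies (F1)–(F4) by Theorem~\ref{original_convergence} — such a bounded solution is unique, so $u_* = \bar u$. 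Since every subsequence has a further subsequence converging to the same limit $\bar u$, the whole family converges: $\tilde u^\e \to \bar u$ locally uniformly.

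Finally I would extract the \emph{uniformity in $f$} claimed in the statement, i.e. that $N_0$ depends only on $K$ and the bound and Hölder exponent of $f$. Suppose not: then there is $\delta>0$, a sequence $\e_j\to 0$ and data $f_j$ with a common bound $B$ and common exponent $\beta$ such that $\|\tilde u^{\e_j}_{f_j} - \bar u_{f_j}\|_{L^\infty(\{|x|\le K\})} \ge \delta$. The uniform bounds above depend only on $B,\beta,K$, so $\{f_j\}$ is precompact in $C(\{|x|\le 2K\}\cap\overline\Sigma)$ and $\{\tilde u^{\e_j}_{f_j}\}$, $\{\bar u_{f_j}\}$ are precompact in $C_{loc}(\overline\Sigma)$; pass to a subsequence with $f_j \to f_\infty$, $\tilde u^{\e_j}_{f_j}\to u_*$, $\bar u_{f_j}\to \bar u_\infty$. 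By stability of viscosity solutions (again Theorem~\ref{original_convergence} / stability under uniform convergence of boundary data and of solutions) both $u_*$ and $\bar u_\infty$ solve the homogenized strip problem with datum $f_\infty$, hence by uniqueness $u_* = \bar u_\infty$, contradicting $\|u_* - \bar u_\infty\|_{L^\infty(\{|x|\le K\})}\ge\delta$.

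The main obstacle is the third step: making the homogenization limit rigorous \emph{up to the Neumann boundary} on an \emph{unbounded} strip. Theorem~\ref{original_convergence} is quoted for bounded $C^1$ domains; the attached remark asserts the extension to $\Pi_\nu(0)$ for bounded solutions, and Theorem~\ref{general:cp} supplies the comparison principle that makes bounded solutions unique, so the clean way to proceed is to invoke these directly rather than to reprove stability by hand. If one instead argues on exhausting balls $\{|x|\le R\}\cap\Sigma$, one must control the solutions on the artificial lateral boundary $\{|x|=R\}$; Lemma~\ref{localization} and Corollary~\ref{localization2} are exactly the tools that show these lateral contributions decay like $R^{-\e}$ and hence do not affect the limit, which is why they were proved beforehand. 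Everything else — the rescaling bookkeeping and the two contradiction arguments — is routine given the cited regularity, localization, and comparison results.
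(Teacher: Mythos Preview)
Your approach is essentially the paper's: rescale $N\mapsto 1/\e$ so the problem becomes $F(D^2 u,x/\e)=0$ on a fixed strip, invoke Theorem~\ref{original_convergence} (in its unbounded-strip form) together with Lemma~\ref{localization} for the basic convergence, and then run a compactness/contradiction argument for the uniformity. The regularity and localization ingredients you list are the right ones.

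There is, however, a genuine omission. The statement asserts that $N_0$ depends only on $K$ and on the bound and H\"older exponent of $f$---in particular, \emph{not} on the direction $\nu$. Your final contradiction argument varies only the data $f_j$ while keeping $\nu$ fixed, so what you have actually shown is uniformity over $f$ at a fixed $\nu$. The paper's proof is organized the other way around: step~1 disposes of the fixed-$\nu$ case in one line (Theorem~\ref{original_convergence} plus Lemma~\ref{localization}), and the substantive step~2 is precisely the independence from $\nu$. There one assumes a sequence $\nu_k\to\nu_0$ \emph{and} $f_k\to\bar f$ with $N_k\to\infty$ violating \eqref{conv}, extracts a locally uniform limit $u_0$ of $u_{N_k}^k$ via the uniform H\"older bounds, and uses stability to identify $u_0$ as the $\bar F$-solution on the limiting strip $\{-K\le x\cdot\nu_0\le 0\}$ with data $\bar f$, which forces a contradiction.

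The fix to your argument is exactly this: in your step~4, also allow $\nu_j\in\mathcal{S}^{n-1}$ to vary, pass to a subsequence with $\nu_j\to\nu_\infty$ (compactness of the sphere), and check that the stability/identification step still goes through even though the domains $\{-K\le x\cdot\nu_j\le 0\}$ are moving. This last point is where the localization lemmas earn their keep again: the lateral contributions are controlled uniformly in $\nu$, so the limit problem is posed on $\{-K\le x\cdot\nu_\infty\le 0\}$ and uniqueness (Theorem~\ref{general:cp}) closes the argument. Apart from this, your write-up matches the paper's proof; the opening paragraph's back-and-forth on the rescaling could be trimmed, since the substitution $\e=1/N$ immediately gives $F(D^2 u_N, x/\e)=0$ on the fixed strip with unchanged boundary data.
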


\begin{proof}
1. For fixed $\nu$, this is a consequence of Theorem~\ref{original_convergence} and Lemma~\ref{localization}.

\vspace{10pt}

2. Let us fix $K$ and $\delta$. To show that $N_0$ is independent of $\nu$,  suppose not. Then for some $\delta>0$ there exists a sequence of directions $\nu_k$ and a sequence of  uniformly bounded, uniformly H\"{o}lder continuous functions $f_k \in C^{\beta}(\R^n)$ such that $\nu_k \to \nu_0$, $f_k(x)$ locally uniformly converges to $\bar{f}(x)$ and the the constants $N_k=N_0(\nu_k)$ given by \eqref{conv} goes to infinity as $k\to\infty$.  Let us denote $u_N^k$ and $\bar{u}^k$ by the corresponding solutions given in above theorem with $\nu=\nu_k$. Since $\{u_{N_k}^k\}$ is uniformly H\"{o}lder, along a subsequence it converges to a function $u_0$. Using the stability properties of viscosity solutions as well as Theorem~\ref{original_convergence}, one can check that $u_0$ solves 
$$
\left\{\begin{array}{lll}
\bar{F}(D^2 \bar{u})=0 &\hbox{ in }& \{-K\leq x\cdot\nu_0 \leq 0\};\\ \\
\partial_{\nu} \bar{u} = \bar{f}(x) &\hbox{ on }& \{x\cdot\nu_0=0\};\\ \\
u=1 &\hbox{ on } & \{x\cdot\nu = -K\}.
\end{array}\right.
$$




Now we choose $N_0$ so that \eqref{conv} holds for $\nu=\nu_0$ and $\delta/3$, yielding a contradiction.

\end{proof}

\section{Homogenization in half-spaces}
Now we are ready to study the averaging properties for solutions of $(P_\e$), beginning with the strip domain: later we will approximate the general domain with these solutions.  For given $\nu\in\mathcal{S}^n$, let us define 
 \begin{equation}\label{strip}
 \Pi_{\nu}(p):=\{-1\leq (x-p)\cdot\nu \leq 0\}.
 \end{equation}
 Consider $u^\e$ solving
$$
\left\{\begin{array}{lll}
F(D^2u^\e,\frac{x}{\e})=0 &\hbox{ in } & \Pi_{\nu}(p);\\ \\
\frac{\partial}{\partial \nu} u = g(\frac{x}{\e}) &\hbox{ on }& \Gamma_0(\nu,p):=\{(x-p)\cdot \nu =0\};\\ \\
u=1 &\hbox{ on }& \Gamma_1(\nu,p):=\{(x-p)\cdot\nu=-1\}.
\end{array}\right.\leqno(P_\e^{\nu})
$$

The main theorem in this section is given below:

\begin{theorem} \label{thm:planar}
 For  given $\nu\in\mathcal{S}^n$ and $p\in \RR^n$, let $u_\e$ solve $(P^\nu_\e)$. Then the following holds:
\begin{itemize}
\item[(i)] For irrational directions $\nu$, there exists a unique constant $\mu(\nu)\in [\min g, \max g]$ independent on the choice of $p$
 such that $u^\e$ locally uniformly converges to the linear profile
 \begin{equation}\label{linear2}
 u(x)= \mu(\nu)((x-p)\cdot\nu+1)+1.
\end{equation}

\item[(ii)] For rational directions $\nu$, if  $\Gamma_0$ goes through the origin (that is if $p =0$),
then the statement in (i) holds for $\nu$ as well.
\item[(iii)] [Error estimate] Let $\nu$ be an irrational direction.
Then for $u^\e$ solving $(P^{\nu}_\e)$ and $u$ as given in \eqref{linear2}, we have
the following estimate: for any $0<\alpha<1$, there exists a
constant $C=C_{\alpha}>0$ such that
\begin{equation}\label{error101}
|u^\e - u| \leq  C\e^{\alpha/20} + C\omega_\nu(\e)^{\beta}\quad \hbox{  in
} \Pi_\nu(p),
\end{equation}
where $\omega_\nu$ is as given in Lemma~\ref{lemma-M} (ii).
\end{itemize}
\end{theorem}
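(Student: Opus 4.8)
The plan is to derive parts (i) and (ii) from a compactness argument combined with the quantitative flatness estimate that also yields (iii); thus \eqref{error101} is really the statement to establish, and (i)--(ii) follow by letting $\e\to 0$ in it.

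\emph{Bounds and compactness.} First I would trap $u^\e$ between the two affine functions $1+(\min g)((x-p)\cdot\nu+1)$ and $1+(\max g)((x-p)\cdot\nu+1)$: each is annihilated by $F(\cdot,x/\e)$ since $F(0,\cdot)=0$, matches the Dirichlet datum on $\Gamma_1$, and carries the correct one-sided Neumann slope on $\Gamma_0$, so the general comparison principle (Theorem~\ref{general:cp}) applies. In particular $\|u^\e\|_{L^\infty(\Pi_\nu(p))}\le C$, and any limiting slope lies in $[\min g,\max g]$. Interior gradient-H\"older bounds (Theorem~\ref{lemma-interior}), the H\"older estimate up to $\Gamma_0$ (Theorem~\ref{thm:reg2}(a), using $1\le g\le 2$), and the explicit barriers of Lemma~\ref{localization} near $\Gamma_1$ give equicontinuity; hence $\{u^\e\}$ is precompact in $C_{\mathrm{loc}}(\overline{\Pi_\nu(p)})$, and by stability of viscosity solutions together with Theorem~\ref{original_convergence} (in the strip form of the Remark after it) any subsequential limit $u_0$ solves $\bar{F}(D^2u_0)=0$ in $\Pi_\nu(p)$ with $u_0=1$ on $\Gamma_1$.

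\emph{The flatness lemma --- the main step.} The heart of the proof is to show that for irrational $\nu$ the oscillation of $u^\e$ over each cross-section $\{(x-p)\cdot\nu=-t\}$, $0\le t\le 1$, is of order $\e^{\alpha/20}+\omega_\nu(\e)^\beta$ (of order $\e^{\alpha/20}$ in the rational case $p=0$). After rescaling $x\mapsto x/\e$, translating $u^\e$ by a lattice vector $z\in\ZZ^n$ changes neither $F(\cdot,\cdot)$ (by (F4)) nor the Neumann datum $g$ (by periodicity of $g$), and only shifts the strip by $z\cdot\nu$ in the normal direction. Lemma~\ref{lemma-M}(ii)--(iii) supplies, for irrational $\nu$, lattice vectors realizing essentially arbitrary tangential displacements while keeping $|z\cdot\nu|$ below $\omega_\nu(\e)$; a perturbation estimate --- obtained by feeding affine correctors of slope of order $\omega_\nu(\e)$ into Theorem~\ref{general:cp} and Corollary~\ref{localization2} --- then converts this small normal mismatch into a change of order $\omega_\nu(\e)^\beta$ in the solution, yielding near-invariance of $u^\e$ under tangential translations and hence the oscillation bound. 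The extra $\e^{\alpha/20}$ accounts for the oscillation in the operator itself: away from $\Gamma_0$ one replaces $F(\cdot,x/\e)$ by $\bar{F}$ on mesoscopic cubes via Theorem~\ref{ext} and Theorem~\ref{original_convergence}, while the microscopic effect of $g$ stays confined to a thin layer near $\Gamma_0$ where the first-derivative oscillation dominates. This multiscale bookkeeping --- choosing the widths of the layers and absorbing the resulting loss into the exponent $\alpha/20$ --- is the delicate point and where I expect the most work; for rational $\nu$ with $p=0$ the same scheme runs with $\omega_\nu\equiv 0$, because Lemma~\ref{lemma-M}(i) furnishes exact tangential lattice translations.

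\emph{Conclusion.} Granting the flatness lemma, $u_0$ depends only on $t=(x-p)\cdot\nu$; writing $u_0=h(t)$, the relation $\bar{F}(h''(t)\,\nu\otimes\nu)=0$ and the uniform ellipticity of $\bar{F}$ force $h''\equiv 0$, so with $h(-1)=1$ we get $u_0(x)=\mu\,((x-p)\cdot\nu+1)+1$ as in \eqref{linear2}, with $\mu\in[\min g,\max g]$. To see that $\mu$ is unique and independent of $p$, I would compare solutions for two base points (or two subsequences) after matching their Dirichlet and Neumann faces by a lattice translation from Lemma~\ref{lemma-M}(iii) (respectively (i)) and again invoke the perturbation estimate; this forces the slopes to agree, so the whole family $u^\e$ converges, proving (i) and (ii). Finally (iii) follows by sandwiching $u^\e$ between the profile \eqref{linear2} shifted up and down by $C(\e^{\alpha/20}+\omega_\nu(\e)^\beta)$ --- plus a corrector absorbing the residual Neumann mismatch --- and applying Theorem~\ref{general:cp} once more, keeping track of the constants.
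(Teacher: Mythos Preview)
Your overall architecture is right and close to the paper's: flatness on cross-sections via lattice translations, linear limiting profiles, and a Cauchy-type comparison for uniqueness of the slope. But you misidentify the origin of the $\e^{\alpha/20}$ term, and this hides a real gap in your flatness argument.

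No operator homogenization is used anywhere in the paper's proof of Theorem~\ref{thm:planar}. A translation by $z\in\e\ZZ^n$ preserves \emph{both} $g(x/\e)$ and $F(\cdot,x/\e)$ exactly, by periodicity and (F4); there is nothing to average in the operator at this stage, and Theorems~\ref{original_convergence}--\ref{ext} play no role here. The term $\e^{\alpha/20}$ comes instead from the \emph{spacing} in Weyl equidistribution. Lemma~\ref{lemma-M}(ii) does not furnish ``essentially arbitrary tangential displacements'': the lattice point $y$ it produces is only within $M\e^{1/10}$ of the target point $x_1$ (in the original scale), with normal mismatch at most $\e\,\omega_\nu(\e)$. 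The normal mismatch is handled as you say, via Theorem~\ref{thm:reg2}(b) and Lemma~\ref{localization}, and contributes $C\omega_\nu(\e)^\beta$. But one still has to bridge the residual tangential gap $|y-x_1|\le M\e^{1/10}$, and this is done by interior H\"older regularity (Theorem~\ref{lemma-reg}) applied at distance $\e^{1/20}$ from $\Gamma_0$: the H\"older seminorm there scales like $\e^{-\alpha/20}$, so
\[
|u^\e(y)-u^\e(x_1)|\le C\,\e^{-\alpha/20}\,(M\e^{1/10})^{\alpha}=C\e^{\alpha/20}.
\]
That is the content of the paper's Lemma~\ref{Claim 2}. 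In your write-up the phrase ``essentially arbitrary tangential displacements'' papers over exactly this gap, and the separate invocation of $\bar F$ to account for $\e^{\alpha/20}$ is a detour that would not by itself close it. For rational $\nu$ with $p=0$ the scheme is indeed the same with $\omega_\nu\equiv 0$, since Lemma~\ref{lemma-M}(i) gives exact tangential lattice points at bounded spacing $M_\nu$.

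For the uniqueness of $\mu$ your sketch is in the right direction but vague where the paper is concrete. In Lemma~\ref{Claim 4} one rescales $u_\e$ and $u_\eta$ (with $\eta<\e$) to unit period, aligns their Neumann faces via Lemma~\ref{lemma-M}(iii), and then uses the flatness estimate to trap the rescaled $w_\e$ between two affine profiles of slopes $\mu_\e\pm\mathcal E(\e)$. These affine profiles are used to \emph{extend} $w_\e$ across the wider strip $\Omega_\eta$, producing explicit viscosity super- and subsolutions (by taking min/max with $w_\e+c$) that sandwich $w_\eta$; comparison then gives $|\mu_\eta-\mu_\e|\le \mathcal E(\e)+C(\eta^\alpha+\eta^\beta)$. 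This patching-by-affine-extension barrier is the step you should spell out rather than defer to an unspecified ``perturbation estimate''.
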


The proof of Theorem~\ref{thm:planar} consists of several lemmas below: the outline largely follows that of the corresponding result (Theorem 2.4) in \cite{CKL}: for clarity we present the full proof below.

\vspace{0.2 in}
 {\bf  Proof of Theorem~\ref{thm:planar}.}
\vspace{0.1in}

Due to the uniform H\"{o}lder regularity of
$\{u_\e\}$ (Theorem~\ref{thm:reg2}(a)), along subsequences
 $u_{\e_j} \to u$  in  $\overline{\Pi_\nu (p)}$.
  Note
that there may be different limits along different subsequences
${\e_j}$. Below we will show that if $\nu$ is an irrational
direction then all subsequential limits of $\{u_\e\}$ coincide.

\medskip

First we present a ``flatness" result: we show that,  sufficiently ($\e^{1/20}$-) away from the Neumann boundary $\Gamma_0=\Gamma(\nu,p)$, $u_\e$ is almost
 a constant on hyperplanes parallel to $\Gamma_0$.

\begin{lemma} \label{Claim 2}
Let us fix $\nu\in\mathcal{S}^n - \R\Z$ and $p\in\R^n$, and let us denote $\Gamma_0=\Gamma_0(\nu,p)$.
 Then for any  $x_0 \in \Pi_{\nu}(p)$ with  ${\rm dist}(x_0, \Gamma_0)>\e^{1/20}$, and for any $0<\alpha<1$, 
 there exists a constant $C=C(\alpha,n)$ such
that for any $x \in H(x_0):=\{(x-x_0)\cdot\nu=0\}$
\begin{equation}\label{mid-flat}
|u_\e (x)-u_\e(x_0)| \leq  \mathcal{E}(\e),
\end{equation}
where $\mathcal{E}(\e):= C \e^{\alpha/20} + C \omega_\nu(\e)^\beta$
with $\omega_\nu:[0,\infty)\to[0,\infty)$ as
given in  Lemma~\ref{lemma-M} (ii).
\end{lemma}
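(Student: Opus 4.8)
The plan is to interpolate between two regularity scales. First, the interior $C^{1,\alpha}$ estimate (Theorem~\ref{lemma-interior}) applied at scale comparable to $d:=\mathrm{dist}(x_0,\Gamma_0)$: rescaling the equation $F(D^2u^\e,x/\e)=0$ on a ball $B_{d/2}(x_0)$ and using $\|u^\e\|_{L^\infty}\le C$ (from the localization Corollary~\ref{localization2}, or directly from comparison with linear barriers) gives $\|Du^\e\|_{L^\infty(B_{d/4}(x_0))}\le C/d \le C\e^{-1/20}$ when $d>\e^{1/20}$. This alone is too weak to conclude flatness; the point is to combine it with the translation quasi-invariance of the whole configuration. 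For an irrational $\nu$ and any $x\in H(x_0)$, Lemma~\ref{lemma-M}(ii) produces a lattice vector $z=y-x_0\in\ZZ^n$ with $|x-y|=|x-x_0-z|\le M\e^{-9/10}$ and $\mathrm{dist}(y,H(x_0))<\omega_\nu(\e)$.

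The key step is then to exploit $(F4)$: since $F(\cdot,x/\e)=F(\cdot,(x+\e z)/\e)$ for $z\in\ZZ^n$, the translated function $u^\e(\cdot+\e z)$ solves the same equation, and — because $\e z$ is (nearly) parallel to $\Gamma_0$ up to the error $\e\,\mathrm{dist}(y,H(x_0))<\e\,\omega_\nu(\e)$ in the normal direction — it satisfies almost the same Neumann condition $\partial_\nu u = g(x/\e)$ on $\Gamma_0$ shifted by $\e z$, with $g$ itself $\ZZ^n$-periodic so $g((x+\e z)/\e)=g(x/\e)$. Applying the general comparison principle (Theorem~\ref{general:cp}) between $u^\e(x)$ and $u^\e(x+\e z)$, with the boundary data differing only by the small normal displacement of the hyperplane (controlled by the $C^{1,\alpha}$ bound on $u^\e$ near the Neumann boundary from Theorem~\ref{thm:reg2}(b), which gives a Hölder-$\beta$–type control producing the $C\omega_\nu(\e)^\beta$ term), yields
\[
|u^\e(x_0) - u^\e(y)| \le C\,\omega_\nu(\e)^\beta .
\]
It remains to pass from $y$ to the target point $x\in H(x_0)$: here $|x-y|\le M\e^{-9/10}$, and since $x-y$ lies essentially in the hyperplane direction, we move within $H(x_0)$ (or a nearby parallel hyperplane), where by the first step the gradient is bounded by $C\e^{-1/20}$; hence $|u^\e(x)-u^\e(y)|\le C\e^{-1/20}\cdot\e^{9/10}=C\e^{17/20}$, which is absorbed into $\mathcal{E}(\e)$. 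A slight care is needed because the naive gradient bound only holds at distance $>d$ from $\Gamma_0$, so one should actually chain the estimate through a sequence of unit-length lattice translations staying in the good region, applying the $C^{1,\alpha}$ bound at unit scale at each step — this is where the genuinely quantitative bookkeeping lives.

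The main obstacle, I expect, is making the comparison-principle step rigorous: one does not literally compare $u^\e$ with a translate of itself (the shifted strip $\Pi_\nu(p)+\e z$ is not exactly $\Pi_\nu(p)$), so one must either (a) perturb the domain and absorb the mismatch near $\Gamma_D$ using the localization estimate of Corollary~\ref{localization2} and the exponent $2-\e$ growth control, or (b) build an explicit barrier of the form $u^\e(x+\e z) \pm C\omega_\nu(\e)^\beta \pm (\text{small linear correction})$ and verify it is a super/subsolution up to the Neumann boundary, using \eqref{operator} to handle the $F$-inhomogeneity and Theorem~\ref{thm:reg2}(b) to control $g$-oscillation against the normal shift. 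Threading the two small parameters $\e^{\alpha/20}$ and $\omega_\nu(\e)^\beta$ simultaneously — the former from converting the interior gradient bound into a pointwise oscillation bound along $H(x_0)$ via the $\e^{-9/10}$ reach, the latter from the normal defect of the lattice approximation — and checking that the exponent $\alpha/20$ survives the rescaling arithmetic, is the delicate part; everything else is a standard application of the cited interior and boundary regularity theorems together with the comparison principle.
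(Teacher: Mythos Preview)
Your overall strategy matches the paper's: translate $u^\e$ by a lattice vector to exploit periodicity, control the resulting boundary mismatch via $C^{1,\alpha}$ regularity together with the localization lemma, and bridge the remaining gap using interior regularity. The gap is in how you invoke Lemma~\ref{lemma-M}(ii). It must be applied at scale $\e$, i.e.\ to the rescaled hyperplane $H(x_0/\e)$, so that the resulting lattice vector lies in $\e\Z^n$. Scaling back, the correct statement is: for any $x_1\in H(x_0)$ there exists $y$ with $y-x_0\in\e\Z^n$, $|x_1-y|\le M\e^{1/10}$, and $\mathrm{dist}(y,H(x_0))<\e\,\omega_\nu(\e)$. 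With your version ($z=y-x_0\in\Z^n$ and $|x-y|\le M\e^{-9/10}$), the translation $\e z$ does not land anywhere near $x$, and the distance $\e^{-9/10}$ is far larger than the strip width, so no interior estimate can close the loop; your own arithmetic silently swaps $\e^{-9/10}$ for $\e^{9/10}$, and neither is the right quantity.

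Once the scaling is right the final step is simpler than you propose. Since $|x_1-y|\le M\e^{1/10}$ and both points sit at distance $>\e^{1/20}$ from $\Gamma_0$, the interior $C^\alpha$ estimate of Theorem~\ref{lemma-reg} (note: $C^\alpha$, not a Lipschitz bound) gives
\[
|u^\e(x_1)-u^\e(y)|\le C\e^{-\alpha/20}\,(M\e^{1/10})^\alpha = C\e^{\alpha/20}
\]
in one stroke --- no chaining through unit-length lattice translations is needed, and indeed such chaining would not help because the issue was never the reach of the regularity estimate but the size of $|x_1-y|$. For the comparison step, the paper does exactly your option~(a): set $\tilde u^\e(x)=u^\e(x-x_0+y)$ and apply Lemma~\ref{localization} to $u^\e-\tilde u^\e$ on the intersection of the two strips, using Theorem~\ref{thm:reg2}(b) to bound $|\partial_\nu u^\e - g(x/\e)|$ on the slightly shifted Neumann boundary and the H\"older continuity of $u^\e$ near $\Gamma_1$ on the Dirichlet side, yielding $|u^\e(x_0)-u^\e(y)|\le C\omega_\nu(\e)^\alpha$.
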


\begin{proof}

Let $x_1\in H(x_0)$. By Lemma~\ref{lemma-M} (ii), there exists $y\in\R^n$ such that $|x_1-y| \leq M\e^{1/10},$  $y-x_0\in\e\Z^n$ and 
\begin{equation}\label{distance}
dist(y, H(x_0)) < \e \omega_{\nu}(\e).
\end{equation}

Let us compare $u_\e(x)$ with $\tilde{u}_\e(x):= u_\e(x-x_0+y)$ in  the domain $\Sigma = \Pi_\nu(p) \cap \Pi_\nu(p+x_0-y)$. Due to Theorem~\ref{thm:reg2}, \eqref{distance} and the fact that $y-x_0\in\e \Z^n$ it follows that 
\begin{equation}\label{estimate1}
 |\partial_\nu u_\e(x) - g(\frac{x-x_0+y}{\e}) |=|\partial_\nu u_\e(x) - g(\frac{x}{\e})| \leq \omega_{\nu}(\e)^{\alpha} \hbox { on } \Gamma_0 - |(x_0-y)\cdot\nu|\nu.
 \end{equation}
 Moreover, due to the H\"{o}lder continuity of $u_\e$ up to the Dirichlet boundary $\Gamma_1(\nu,p)$ we have 
 $$
 |u_\e(x)-1 | \leq (\e\omega_{\nu}(\e))^\beta \hbox{ on } \Gamma_1(\nu,p) + |(x_0-y)\cdot\nu|\nu
 $$
  Lastly, due to the fact that $1\leq g\leq 2$, we have $|u_\e(x)| \leq 2$ in $\Pi_\nu(p)$.
  
  \medskip
  
Putting together above estimates, we can apply Lemma~\ref{localization}  to $v(x):= u_\e(x) - \tilde{u}_\e(x)$ with $\delta = \omega_{\nu}(\e)^\alpha$ and $R >> \delta$ to  obtain that 
 \begin{equation}\label{estimate2}
 |u_\e(x) - \tilde{u}_\e(x)| \leq  C\omega_{\nu}(\e)^{\alpha} \hbox { in } |x-x_0| \leq 1.
 \end{equation} 
Now we can conclude, by observing
$$
\begin{array}{lll}
|u_\e(x_0)-u_\e(x_1) | & \leq&  |u_\e(x_0) - u_\e(y)| + |u_\e (y)-u_\e(x_1)|\\ 
                                    & \leq & C\omega_{\nu}(\e)^\alpha  +|u_\e(y)-u_\e(x_1)|\\
                                    & \leq & C\omega_{\nu}(\e)^\alpha   + C\e^{-\alpha/20} (M\e^{1/10})^\alpha\\
                                    & \leq & C\omega_{\nu}(\e)^\alpha + C\e^{\alpha/20},
\end{array}
$$
where the second inequality is due to \eqref{estimate2} with $x=x_0$ and the third inequality is due to the fact that $|x_1-y| \leq M\e^{1/10}$ and Theorem~\ref{lemma-reg}.

\end{proof}

\medskip

Next we define the average linear profile for $u^\e$: pick a point $y_0\in \{x\cdot\nu = -\frac{1}{2}\}$ and  for each $\e>0$ let us define $v_\e$ by 
$\mu_j$  such that 
\begin{equation}\label{linear}
v_{\e}(x) := \mu_\e x\cdot\nu +u_\e(y_0), \quad \mu_\e = \mu(u^\e) : = u_\e(y_0)-1.
\end{equation}

\vspace{10pt}

By Lemma~\ref{Claim 2} and by the comparison principle
(Theorem~\ref{thm:comp}), we obtain that
\begin{equation}\label{comp1}
\underline{U} \leq u_{\e} \leq \bar{U}
\hbox{ in } \{-1\leq (x-p)\cdot\nu \leq -\e^{1/20}\},
\end{equation}
 where $\bar{U}$ and $\underline{U}$ are linear functions which satisfies
$\bar{U}=\underline{U}=1$ on $\{(x-p)\cdot\nu=-1\}$ and 
$$
\bar{U} = u(x_0)+\mathcal{E}(\e), \quad \underline{U}=u(x_0)-\mathcal{E}(\e) \hbox{ on } \{(x-p)\cdot\nu=-\e^{-1/20}.\}
$$

From \eqref{comp1} and the definition of $v_j$ the following estimate holds: For  $x
\in \Pi_\nu(p)$,
\begin{equation} \label{C2}
|u_{\e} (x)-v_\e(x)| \leq  \mathcal{E}(\e) + C\e^{1/20}.
\end{equation}
this implies that $\{u_\e\}$ uniformly converges to linear profiles along subsequences.

\vspace{0.1in}

\begin{lemma}\label{Claim 4}[Theorem 3.1, (i)]
 If $\nu$ is an irrational direction, then there exists a unique $\mu_\nu$ which only depends on $\nu$ such that  $u_\e$ uniformly converges to a linear profile $u(x) = \mu_\nu ((x-p)\cdot\nu+1)+1$.
\end{lemma}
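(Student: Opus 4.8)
By \eqref{C2}, every subsequential uniform limit of $\{u_\e\}$ on $\Pi_\nu(p)$ is a linear profile $L_\mu(x):=\mu\bigl((x-p)\cdot\nu+1\bigr)+1$ with slope $\mu=\lim_j\mu_{\e_j}\in[\min g,\max g]$; thus it is enough to show that the slope $\mu_\e:=u_\e(y_0)-1$ converges as $\e\to 0$ to a value depending only on $\nu$. I would split this into (a) the limit along any subsequence does not depend on the base point $p$, and (b) all subsequential limits agree.

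For (a) I use irrationality directly. Since $\nu\notin\RR\ZZ^n$, the additive group $\{z\cdot\nu:z\in\ZZ^n\}$ is dense in $\RR$, so by Lemma~\ref{lemma-M}(iii) one can choose $z\in\ZZ^n$ with $|\e z\cdot\nu-(p'-p)\cdot\nu|$ arbitrarily small. By the periodicities (F4) of $F$ and of $g$, the function $x\mapsto u_\e(x-\e z)$ solves the \emph{same} equation and Neumann condition as $(P^\nu_\e)$, but on the strip $\Pi_\nu(p+\e z)$, which is $O\bigl(|\e z\cdot\nu-(p'-p)\cdot\nu|\bigr)$-close to $\Pi_\nu(p')$. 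Comparing this translate with the $p'$-solution by means of Theorem~\ref{thm:comp} and the localization estimate Lemma~\ref{localization} gives $|\mu_\e^{p}-\mu_\e^{p'}|\le C\e^{\alpha/20}+C\omega_\nu(\e)^\beta\to0$; hence every subsequential limit of $\mu_\e$ is independent of $p$.

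For (b) I would blow up $u_\e$ at the oscillatory Neumann boundary. Fix $\e_j\to0$ with $\mu_{\e_j}\to\mu_1$ and set $\psi_j(x):=\e_j^{-1}\bigl(u_{\e_j}(p+\e_j x)-u_{\e_j}(p)\bigr)$ on $\{-\e_j^{-1}\le x\cdot\nu\le0\}$. Working with $u_{\e_j}$ minus its boundary value lets one apply Theorem~\ref{thm:reg2}(b) to the difference and obtain a uniform Lipschitz (and local $L^\infty$) bound for $\{\psi_j\}$ up to $\{x\cdot\nu=0\}$; passing to a further subsequence along which $p/\e_j$ converges modulo $\ZZ^n$ to some $c$, and using (F1), (F4), Theorem~\ref{thm:reg2}(a) and the stability of viscosity solutions, $\psi_j\to\psi_\infty$ locally uniformly, where $\psi_\infty$ (after the harmless translation $x\mapsto x-c$) solves the half-space corrector problem
$$
F(D^2\psi_\infty,x)=0 \quad\hbox{in}\quad \{x\cdot\nu<a\},\qquad \partial_\nu\psi_\infty=g(x)\quad\hbox{on}\quad\{x\cdot\nu=a\},\qquad a:=c\cdot\nu.
$$
Combining \eqref{C2} with the interior flatness Lemma~\ref{Claim 2}, evaluated at depth $\sim\e_j^{-\theta}$ for a small $\theta\in(0,1)$ (so that the window stays $\ll\e_j^{-1}$), shows that $\psi_\infty$ grows linearly with slope $\mu_1$. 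A second subsequence produces another corrector, on a half-space $\{x\cdot\nu<a'\}$, with slope $\mu_2$. I would then show the corrector's slope does not depend on the height of its Neumann boundary: again using density of $\{z\cdot\nu:z\in\ZZ^n\}$, for $\delta>0$ pick $z\in\ZZ^n$ with $|z\cdot\nu-(a'-a)|<\delta$, so that (by (F4) and $\ZZ^n$-periodicity of $g$) translating the first corrector by $z$ gives a corrector of the \emph{same} slope on $\{x\cdot\nu<a+z\cdot\nu\}$; letting $\delta\to0$ and invoking uniqueness of the corrector slope for a fixed height (a consequence of Theorem~\ref{thm:comp}) forces $\mu_1=\mu_2=:\mu_\nu$. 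Then $u_\e\to L_{\mu_\nu}$ locally uniformly, and tracking the quantitative Lemma~\ref{lemma-M}(ii) through the matching step also yields the rate in Theorem~\ref{thm:planar}(iii).

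The main obstacle is the blow-up step. Because \eqref{C2} gives only polynomially small control of $u_\e-L_{\mu_\e}$, the vertical rescaling by $\e_j^{-1}$ amplifies the error and $\mu_1$ cannot be detected on a bounded window; the fix is to let the window grow like $\e_j^{-(1-\theta)}$ --- fast enough to beat the amplified error, yet $\ll\e_j^{-1}$ so as to stay inside the strip --- and to control $\psi_j$ there by combining \eqref{C2}, Lemma~\ref{Claim 2} and the operator homogenization Theorem~\ref{original_convergence}. This is precisely the point where the three scales (microscopic oscillation of $g$, the operator oscillation, the macroscopic linear profile) must be separated, the same multi-scale difficulty that reappears, in more acute form, in Theorem~\ref{continuity}.
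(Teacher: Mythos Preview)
Your approach via blow-up to half-space correctors is genuinely different from the paper's, and it has a real gap at the step you label ``uniqueness of the corrector slope for a fixed height (a consequence of Theorem~\ref{thm:comp})''. Theorem~\ref{thm:comp} is a comparison principle on domains with a Dirichlet piece; after your blow-up the Dirichlet boundary $\{x\cdot\nu=-\e_j^{-1}\}$ has receded to $-\infty$, so the limiting half-space problem has only a Neumann condition and there is no a priori uniqueness for linearly growing solutions. Two correctors on $\{x\cdot\nu<a'\}$ with the same Neumann data $g$ could, as far as Theorem~\ref{thm:comp} tells you, have different asymptotic slopes --- ruling this out \emph{is} precisely the Liouville-type statement equivalent to the lemma you are trying to prove. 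So part~(b) is circular as written. The growing-window remark at the end does not fix this: it may let you read off the slope $\mu_1$ of $\psi_\infty$, but it does not establish that any other corrector on the same half-space must share that slope.

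The paper avoids the half-space corrector entirely. It takes two scales $0<\eta<\e$ (possibly with different base points $p_1,p_2$), rescales to $w_\e(x)=\e^{-1}u_\e(\e x)$ on $\{-1/\e\le x\cdot\nu\le0\}$ and $w_\eta$ on the \emph{larger} strip $\{-1/\eta\le x\cdot\nu\le0\}$, and then builds explicit barriers for $w_\eta$ by patching $w_\e$ with linear profiles of slope $\mu_\e\pm\mathcal{E}(\e)$ across the inner Dirichlet level $\{x\cdot\nu=-1/\e\}$ (this patching is legitimate because \eqref{above} controls the normal slope of $w_\e$ there). A translation supplied by Lemma~\ref{lemma-M}(iii) aligns the Neumann hyperplanes up to $O(\eta)$, and the $C^{1,\alpha}$ estimate handles the resulting mismatch in $g$. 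Comparison then gives $|\mu_\eta-\mu_\e|\le\mathcal{E}(\e)+C(\eta^\alpha+\eta^\beta)$, so $\{\mu_\e\}$ is Cauchy with a limit independent of $p$. This argument never leaves finite strips, so the comparison principle applies directly and no corrector uniqueness is needed. If you want to rescue your route, you would have to prove a genuine half-space Liouville theorem for $F$ with oscillating Neumann data --- which is harder than the lemma itself.
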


\begin{proof}
 1. Let $0<\eta < \e$ be sufficiently small, let $p_1,p_2\in \R^n$, and let $u_\e=u_{\e,p_1}$ solve $(P_\e^{\nu})$ in $\Pi_\nu(p_1)$ and $u_{\eta}=u_{\eta,p_2}$ solve $(P_\e^{\nu})$ in $\Pi_\nu(q)$. We will show that $|\mu_\e - \mu_{\eta}|$ can be made arbitrarily small if we choose $\e$ sufficiently small, independent of the choice of $p_1$ and $p_2$. This concludes Theorem 3.1 (i). In the proof below we strongly use the fact that $F(0,x)\equiv 0$ and thus that linear functions solve $F(D^2u,x)=0$. 
 
 \medskip
 
 2. Let  us  re-scale 
 $$
 w_{\e}(x) = \displaystyle{\frac{u_\e(\e
x)}{\e}}, \,\,\,\,\,  w_\eta(x)=\displaystyle{ \frac{u_\eta(\eta
x)}{\eta}} $$ and denote by $\Gamma_1 = \frac{1}{\e}\Gamma_0(\nu,p_1)$ and $\Gamma_2=\frac{1}{\eta}\Gamma_0(\nu, p_2)$ as the
corresponding Neumann boundaries of $w_{\e}$ and $w_{\eta}$.  We first translate $\Gamma_1$ by $\tau\in\ZZ^n$ such that $\Gamma_1$ and $\Gamma_2$ are close. By (iii) of Lemma~\ref{lemma-M}, there exist $q_1 \in \Gamma_1$ such that
 $$\frac{p_2}{\eta}=q_1 +\tau+ y_1,
 $$ where 
$|y_1| \leq \eta$ and $\tau \in \ZZ^n$. Hence after translating $\omega_\e$ by $\tau$, we may suppose that
  $w_{\e}(x)$ and
$w_\eta(x)$ are  defined, respectively,  on the extended strips
$$
\Omega_\e:=\{x: -\frac{1}{\e} \leq (x-y_1)\cdot\nu \leq 0\}\quad\hbox{ and }\quad
\Omega_\eta:=\{x: -\frac{1}{\eta} \leq (x-y_2)\cdot\nu \leq 0\},
$$
where $y_2 = \frac{p_2}{\eta}$ and $|y_1-y_2 |\leq \eta$. 

\medskip

 Without loss of generality, we may assume that $y_2\cdot \nu \leq y_1 \cdot\nu$. Since $|y_1-y_2|\leq \eta$ and $g\in C^{\beta}$, we have 
\begin{equation}\label{no}
|g(x) - g(x+y_1-y_2)|\leq \eta^{\beta}.
\end{equation}
From \eqref{no} and the $C^{1,\alpha}$ regularity of $w_\e(x)$ (see Theorem~\ref{thm:reg2}), we conclude that there exists a constant $C_0>0$ independent of $\e$ and $\eta$ such that 
\begin{equation}\label{boundary2}
|\partial_\nu w_\e(x) - g(x) |\leq C_0(\eta^{\alpha}+\eta^{\beta})\quad \hbox{ on } \Gamma_\eta:=\{(x-y_2)\cdot\nu=0\}.
\end{equation}

 Let $v_\e$ be given by \eqref{linear}.  Then by \eqref{C2}
\begin{equation}\label{above0}
|w_\e(x)-\frac{v_\e(\e x)}{\e}| \leq \frac{\mathcal{E}(\e)}{\e}.
\end{equation}
 \vspace{10pt}

From \eqref{above0} and the comparison principle, it follows that
\begin{equation}\label{above}
(\mu_\e-\mathcal{E}(\e))((x-y_1)\cdot\nu  + \dfrac{1}{\e})\leq w_\e(x)
-\dfrac{1}{\e} \leq (\mu_\e +\mathcal{E}(\e))((x-y_1)\cdot\nu
+\dfrac{1}{\e}) \quad \hbox{ in } \Omega_\e.
\end{equation}  \eqref{above} means that the slope of $w_\e$ in the direction of
$\nu$ (i.e. $\nu\cdot Dw_\e$) is between $\mu_\e\pm\mathcal{E}(\e)$ on $\{x: (x-y_1)\cdot\nu= -\frac{1}{\e}\}$. Now
let us consider the linear profiles
$$
l_1(x) = a_1(x-y_1)\cdot\nu+b_1\hbox{ and
}l_2(x)=a_2(x-y_1)\cdot\nu+b_2,
$$ whose respective slopes are
$a_1=\mu_\e+\mathcal{E}(\e)$ and $a_2=\mu_\e - \mathcal{E}(\e)$.  Here $b_1$
and $b_2$ are chosen to match the Dirichlet boundary data of $\omega_{\eta}$.

\vspace{10pt}

3. Now we define
$$
\overline{w}(x): =  \left\{\begin{array} {lll} l_1(x) &\hbox{ in } &
\{-1/\eta-\eta \leq(x-y_1)\cdot \nu \leq -1/\e\}\\ \\
w_\e(x)+c_1  &\hbox{ in }& \{-1/\e \leq(x-y_1)\cdot \nu \leq 0\}
\end{array}\right.
$$
and
$$
\underline{w}(x): =  \left\{\begin{array} {lll} l_2(x) &\hbox{ in }
&
\{-1/\eta-\eta \leq(x-z_1)\cdot \nu \leq -1/\e\}\\ \\
w_\e(x)+c_2  &\hbox{ in }& \{-1/\e \leq(x-y_1)\cdot \nu \leq 0\}
\end{array}\right.
$$
where $c_1$ and $c_2$ are constants satisfying 
$$l_1=w_\e+c_1, \quad l_2=w_\e+c_2\hbox{ on }\{(x-y_1)\cdot\nu= -1/\e\}.
$$ (See Figure 2.)

\begin{figure}
\center{\epsfig{file=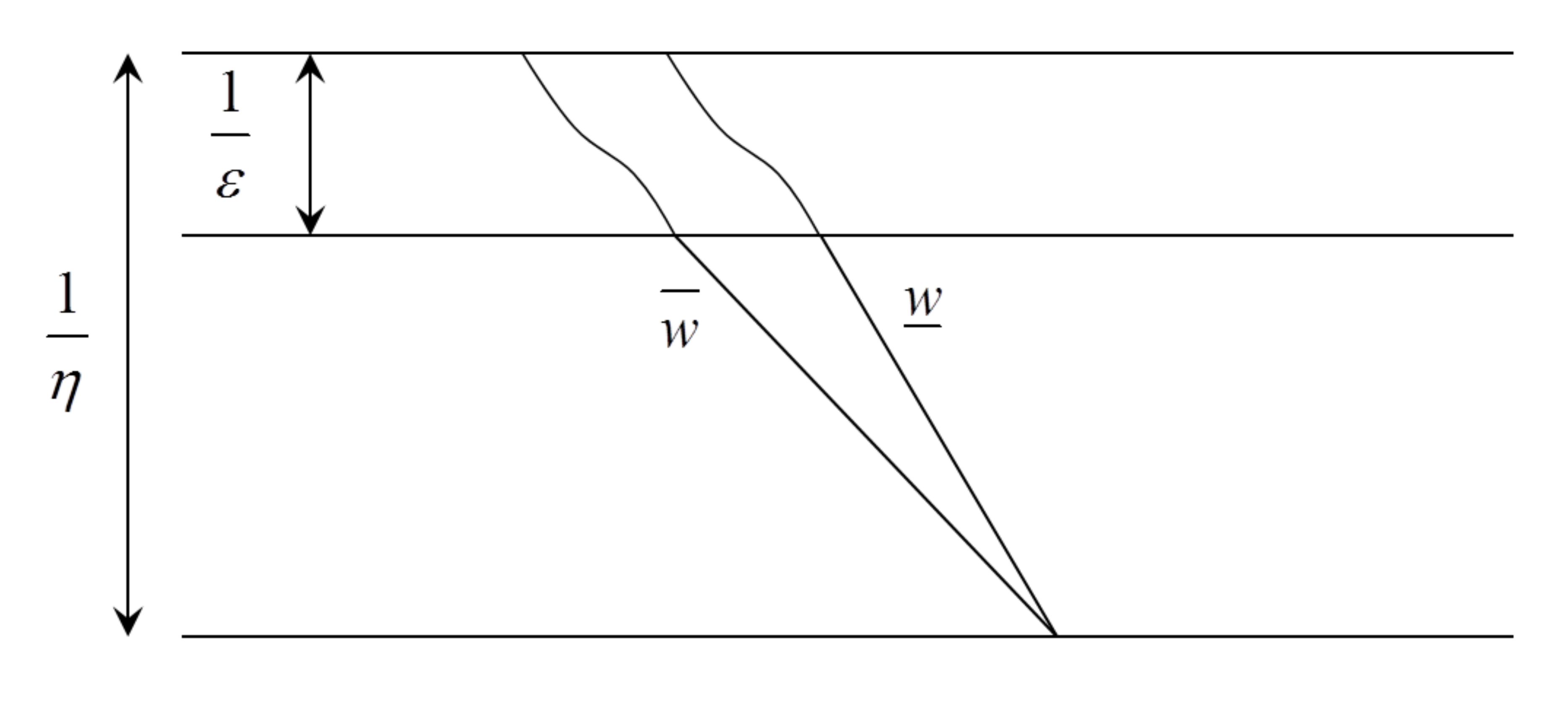,height=2.0in}} \center{Figure 2}
\end{figure}

\medskip

 Note that, due to \eqref{above},  in $\{-\frac{1}{\e}\leq (x-z_1)\cdot\nu \leq 0\}$ we have
 $$
 \overline{w}(x)= \min (l_1(x), w_\e(x)+c_1)\quad\hbox{ and }\quad\underline{w}(x) = \max (l_2(x), w_\e(x)+c_2),
 $$
 and thus it follows that $\overline{w}$ and $\underline{w}$ satisfies, in the viscosity sense,
 $$
F(D^2\underline{w}, \frac{x}{\e}) \leq 0\leq  F(D^2\bar{w},\frac{x}{\e}) \quad \hbox{ in } \Omega_{\eta}.
$$

\medskip

 4. Let us define
$$
h_1(x)=C_0(\eta^\alpha+\eta^\beta )((x-y_2)\cdot\nu + 1/\eta),
$$
where $C_0$ is given in \eqref{boundary2}.
Then  $w^+:=\overline{w}+h_1$ and $w^-:=\underline{w}-h_1$
respectively solves 
$$
\left\{\begin{array}{lll}
F(D^2w^+,x) \geq 0 &\hbox{ in }&\Omega_\eta;\\ \\
\dfrac{\partial w^+}{\partial \nu} \geq g(x) &\hbox{ on }&
\Gamma_\eta
\end{array}\right.
$$
and
$$
\left\{\begin{array}{lll}
F(D^2w^-,x) \leq 0 &\hbox{ in }&\Omega_\eta;\\ \\
\dfrac{\partial w^-}{\partial \nu} \leq g(x) &\hbox{ on }
&\Gamma_\eta.
\end{array}\right.
$$

 Since $w^+=w^- = w_{\eta}$ on $\{(x-p)\cdot\nu=-\frac{1}{\eta}\}$,
 from the comparison principle for $(P_\e)$ (Theorem~\ref{general:cp} it follows that
\begin{equation}\label{error}
 w^- \leq w_{\eta} \leq w^+\,\, \hbox{ in }
 \Omega_\eta.
\end{equation}

Hence we conclude
\begin{equation}\label{error2}
 |\mu_\eta -\mu_\e| \leq \mathcal{E}(\e) +C(\eta^\alpha+\eta^\beta),
\end{equation}
 where $\mu_\eta$ is the slope of $v_\eta$, which is defined as in \eqref{linear} for $u_\eta$. Now we can conclude.

\end{proof}

\vspace{10pt}

The proof of the following lemma is immediate from Lemma~\ref{Claim
4} and \eqref{error2} .

\medskip

\begin{lemma}\label{irrational_error} [Error estimate: Theorem~\ref{thm:planar} (iii)]
For any irrational direction $\nu$ there is a unique homogenized
slope $\mu(\nu)\in \RR$ and $\e_0=\e_0(\nu)>0$ such that for
$0<\e<\e_0$ the following holds: for any $p\in\R^n$ and $u_\e$ solving $(P_\e^\nu)$ in $\Pi_\nu(0)$, 
 \begin{equation}\label{error:final}
 |u_\e(x)-(\mu(\nu)((x-p)\cdot\nu +1)+1)| \leq \mathcal{E}(\e) \hbox{ in } \Pi_{\nu}(p).
 \end{equation}

\end{lemma}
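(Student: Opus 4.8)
The plan is to deduce the estimate directly from the slope–convergence estimate \eqref{error2} obtained inside the proof of Lemma~\ref{Claim 4}, combined with the linearization estimate \eqref{C2}; no new construction is needed.

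First I would extract the homogenized slope. Fix an irrational direction $\nu$. By \eqref{error2}, for every $p\in\R^n$ and every $0<\eta<\e<\e_0(\nu)$ the slopes $\mu_\e=\mu(u^\e)$ from \eqref{linear} satisfy $|\mu_\eta-\mu_\e|\le \mathcal{E}(\e)+C(\eta^\alpha+\eta^\beta)$; since $\mathcal{E}(\e)\to 0$ as $\e\to 0$, the family $\{\mu_\e\}$ is Cauchy and converges to a limit $\mu(\nu)$. Because the right-hand side of \eqref{error2} does not involve the base points, the value $\mu(\nu)$ is independent of $p$, and since each $\mu_\e$ lies in $[\min g,\max g]$ by the maximum principle, so does $\mu(\nu)$. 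Letting $\eta\downarrow 0$ in \eqref{error2} then gives
$$
|\mu(\nu)-\mu_\e|\le \mathcal{E}(\e)\qquad\hbox{for }0<\e<\e_0(\nu).
$$

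Next I would promote this to the pointwise bound on $u_\e$. By \eqref{C2}, $|u_\e(x)-v_\e(x)|\le \mathcal{E}(\e)+C\e^{1/20}$ on $\Pi_\nu(p)$, where $v_\e$ is the linear function with $\nu$-slope $\mu_\e$. Evaluating this on the Dirichlet face $\Gamma_1(\nu,p)=\{(x-p)\cdot\nu=-1\}$, where $u_\e\equiv 1$, pins down the additive constant of $v_\e$ up to an error $\mathcal{E}(\e)+C\e^{1/20}$, so that $|v_\e(x)-(\mu_\e((x-p)\cdot\nu+1)+1)|\le \mathcal{E}(\e)+C\e^{1/20}$ throughout $\Pi_\nu(p)$. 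Combining the two displays with the slope estimate above and using $0\le (x-p)\cdot\nu+1\le 1$ on $\Pi_\nu(p)$ yields
$$
|u_\e(x)-(\mu(\nu)((x-p)\cdot\nu+1)+1)|\le 3\mathcal{E}(\e)+2C\e^{1/20},
$$
and since $\e^{1/20}\le \e^{\alpha/20}$ for small $\e$ (because $\alpha<1$), the last term is absorbed into $\mathcal{E}(\e)$ after enlarging its constant, which is \eqref{error:final}.

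There is no essential obstacle: the statement is a repackaging of estimates already in hand. The only care needed is bookkeeping — checking that $\e_0$ can be taken to depend on $\nu$ alone (this is inherited from Lemma~\ref{Claim 2}, whose thresholds involve only the modulus $\omega_\nu$ and dimensional constants), verifying that the $p$-uniformity of \eqref{error2} genuinely passes to the limit so that $\mu(\nu)$ is a single number, and confirming that all error contributions (the $\mathcal{E}(\e)$ from the slope, the $\mathcal{E}(\e)+C\e^{1/20}$ from \eqref{C2}, and the slope-times-length term) collapse into a single $\mathcal{E}(\e)$ up to a fixed constant that may be folded into the definition of $\mathcal{E}$.
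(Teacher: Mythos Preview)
Your proposal is correct and follows the same route the paper indicates: the paper merely states that the lemma is ``immediate from Lemma~\ref{Claim 4} and \eqref{error2},'' and what you have written is precisely the unpacking of that sentence --- pass to the limit $\eta\to 0$ in \eqref{error2} to pin down $\mu(\nu)$ and the slope error, then combine with \eqref{C2} and the Dirichlet condition to control the full profile. Your bookkeeping on the additive constant and on absorbing $C\e^{1/20}$ into $\mathcal{E}(\e)$ is exactly what is needed.
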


\begin{lemma} \label{Claim 4'} [Theorem~\ref{thm:planar} (ii)]

Let $\nu$ be a rational direction. If the Neumann boundary
$\Gamma_0$ passes through $p=0$, then there is a unique homogenized
slope $\mu(\nu)$ for which the result of
Lemma~\ref{irrational_error} holds with $\mathcal{E}(\e)
=C\e^{\alpha/2}.$

\end{lemma}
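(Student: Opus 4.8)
The plan is to reduce the rational case to the analysis already carried out for irrational directions by exploiting the extra rigidity that comes from assuming $\Gamma_0$ passes through the origin. When $\nu$ is rational, Lemma~\ref{lemma-M}~(i) gives the key replacement for Lemma~\ref{lemma-M}~(ii): there is a fixed lattice constant $M_\nu>0$ so that every point of a hyperplane through the origin with normal $\nu$ lies within distance $M_\nu$ of a point of $\e\ZZ^n$ sitting on the \emph{same} hyperplane (after rescaling by $\e$, using that $\Gamma_0$ goes through $0$ so that $\frac1\e\Gamma_0=\Gamma_0$). Thus the boundary data $g(\tfrac{x}{\e})$ on $\Gamma_0$ is genuinely $\e M_\nu$-periodic in the tangential directions, with no drift and no error term $\omega_\nu(\e)$. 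First I would rerun the proof of Lemma~\ref{Claim 2} verbatim, replacing every invocation of Lemma~\ref{lemma-M}~(ii) by Lemma~\ref{lemma-M}~(i): the shift vector $y-x_0$ now lies \emph{exactly} in $\e\ZZ^n\cap H(x_0)$, so \eqref{distance} holds with the right-hand side equal to $0$ and \eqref{estimate1} holds with no error at all. The localization Lemma~\ref{localization} then yields, on dyadic annuli of size $R\simeq\e^{-1/20}$, a flatness estimate of the form $|u_\e(x)-u_\e(x_0)|\le C\e^{\alpha/20}$ on $H(x_0)$ for $\mathrm{dist}(x_0,\Gamma_0)>\e^{1/20}$ — that is, $\mathcal E(\e)$ is replaced by $C\e^{\alpha/20}$, and after optimizing the interior H\"older/Lipschitz trade-off one can in fact take the exponent up to $\e^{\alpha/2}$ as claimed.

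With this improved flatness in hand, the construction of the average linear profile $v_\e$ in \eqref{linear} goes through unchanged, and \eqref{comp1}–\eqref{C2} give $|u_\e(x)-v_\e(x)|\le C\e^{\alpha/2}$ in $\Pi_\nu(0)$. The uniqueness-of-slope argument of Lemma~\ref{Claim 4} then also carries over, but it must be set up so that the translation step keeps \emph{both} Neumann hyperplanes passing through (a lattice copy of) the origin: given $\e$ and $\eta$ with the two strips $\Pi_\nu(0)$, after rescaling I translate $\Gamma_1=\frac1\e\Gamma_0$ by a vector $\tau\in\ZZ^n$ using Lemma~\ref{lemma-M}~(i) rather than (iii), so that $\Gamma_1+\tau$ and $\Gamma_2=\frac1\eta\Gamma_0$ coincide exactly (since both pass through $0$ and have the same rational normal, a lattice translation can make them equal, not merely $\eta$-close). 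Because there is now \emph{no} mismatch $|y_1-y_2|\le\eta$, the error term $\eta^\beta$ from \eqref{no} disappears; the only errors are $\mathcal E(\e)=C\e^{\alpha/2}$ from the flatness of $w_\e$ and, after rescaling back, $C\eta^{\alpha/2}$ from that of $w_\eta$. Running the barrier construction $\overline w,\underline w,w^\pm$ exactly as in steps 3–4 of Lemma~\ref{Claim 4} and applying the general comparison principle Theorem~\ref{general:cp} yields $|\mu_\e-\mu_\eta|\le C\e^{\alpha/2}+C\eta^{\alpha/2}$, hence $\{\mu_\e\}$ is Cauchy and converges to a unique $\mu(\nu)$ depending only on $\nu$, with the rate \eqref{error:final} in the form $\mathcal E(\e)=C\e^{\alpha/2}$.

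The main obstacle is the translation/alignment step in the uniqueness argument: one must verify that two hyperplanes with the same rational normal, both through the origin, become \emph{exactly} identical after a suitable $\ZZ^n$-translation once rescaled, and that this translation does not move the Dirichlet boundary in a way that reintroduces errors — this is precisely where the hypothesis ``$\Gamma_0$ passes through $p=0$'' is used, and it is what fails for general $p$, forcing the $\omega_\nu(\e)$ machinery in the irrational case. A secondary, purely bookkeeping point is to check that replacing $\omega_\nu(\e)$ by $0$ throughout and re-optimizing the competition between the interior $C^\alpha$ estimate (Theorem~\ref{lemma-reg}) at scale $\e^{1/20}$ and the displacement $\le M_\nu\e$ genuinely produces the exponent $\alpha/2$ rather than $\alpha/20$; this follows because the tangential displacement needed is now only $O(\e)$ (lattice spacing) instead of $O(\e^{1/10})$, so the balance shifts accordingly. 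Everything else — the comparison principle, the localization lemma, the regularity estimates — is applied exactly as in the irrational case.
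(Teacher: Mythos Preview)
Your proposal is correct and follows essentially the same approach as the paper: rerun the proofs of Lemma~\ref{Claim 2} and Lemma~\ref{Claim 4} with the simplification that, since $p=0$, the rescaled Neumann hyperplanes $\tfrac1\e\Gamma_0$ and $\tfrac1\eta\Gamma_0$ are both exactly $\{x\cdot\nu=0\}$, so the Neumann data match without any translation or $\omega_\nu$-error. The paper's own proof is a one-paragraph remark to this effect; your version is more detailed, and in particular your explanation of why the exponent improves from $\alpha/20$ to $\alpha/2$ (tangential displacement now $O(M_\nu\e)$ rather than $O(\e^{1/10})$, so one may take the boundary distance $\sim\e^{1/2}$ in the H\"older estimate) fills in something the paper leaves implicit. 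One small over-complication: you invoke a lattice translation $\tau\in\ZZ^n$ to align $\Gamma_1$ and $\Gamma_2$, but since both already equal $\{x\cdot\nu=0\}$ no translation is needed at all---the paper notes this explicitly (``without translation of the $x$ variable'').
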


\begin{proof} The proof is parallel to that of  Lemma~\ref{Claim 4}.  Let $\omega_\e$ and $\omega_{\eta}$ be as given in the proof of Lemma ~\ref{Claim 4}. Note that, since
$\Omega_\e$ and $\Omega_\eta$ have their Neumann boundaries passing
through the origin, $\partial w_\e /\partial \nu =g(x) =\partial
w_\eta /\partial \nu$ without translation of the $x$ variable, and
thus we do not need to use the properties of hyperplanes with an
irrational normal (Lemma~\ref{lemma-M}(iii)) to estimate the error between the
shifted Neumann boundary datas.

\end{proof}

As mentioned in \cite{CKL}, if $\nu$ is a rational direction with
$p\neq 0$, the values of $g(\cdot/\e)$ on $\partial \Omega_\e$ and
$\partial \Omega_\eta$ may be very different under any translation,
and thus the proof of Lemma~\ref{Claim 4} fails. In this case $u_\e$
may converge to solutions of different Neumann boundary data
depending on the subsequences.

\section{Continuity of the homogenized slope}

In the previous section we have shown that for an irrational direction $\nu \in \mathcal{S}^{n-1} -
\RR\ZZ^n$, there is a unique homogenized slope $\mu(\nu)$ for any solutions of $(P_\e^\nu)$ in $\Pi_\nu(p)$. In this section we investigate the continuity properties of $\mu(\nu)$ as well as the mode of convergence for $u^\e$ as the normal direction $\nu$ of the domain varies. For section 4 and 5 we assume the following additional condition on the homogenized operator $\bar{F}$ as given in Theorem ~\ref{original_convergence}:

\begin{equation}\label{new}
\bar{F}(M):\mathcal{M}^n\to \R \hbox{ only depends on the eigenvalues of $M$.}
\end{equation}

As mentioned before, the condition~\eqref{new} is equivalent to saying that $\bar{F}$ is rotation and reflection invariant.

\begin{theorem}\label{continuity}
Let $\mu(\nu): (\mathcal{S}^{n-1} - \RR\ZZ^n) \to \RR$ be as given in Theorem 3., and suppose that $\bar{F}$ satisfies \eqref{new}. Then $\mu$ has a continuous
extension $\bar{\mu}(\nu): \mathcal{S}^{n-1} \to \RR$. More
precisely for any $\nu\in S^{n-1}$ and $\delta>0$  there exists $\e_0=\e_0(\nu)$ such that the following holds:

If $\nu_1$ and  $\nu_2$ are irrational such that
\begin{equation}\label{small}
0< |\nu_1-\nu| ,|\nu_2-\nu| < \e_0,
\end{equation}
then we have
\begin{itemize}
\item[(a)]$|\mu(\nu_1)-\mu(\nu_2)|<\delta$;
\item[(b)] For given $p\in\R^n$, the solutions
$u_\e^{\nu_i}$ of $(P^{\nu_i}_\e)$ in $\Pi_{\nu_i}(p)$ and the average slope $\mu(u_\e^{\nu_i})$ given as in \eqref{linear} satisfies 
$$|\mu(u_\e^{\nu_i}) - \mu(\nu_i)|
< \delta \quad\hbox{ if }\,\,\e \leq \delta|\nu_i-\nu|^{21/20}, \quad i=1,2.
$$

\end{itemize}
\end{theorem}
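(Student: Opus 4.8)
The plan is to derive both (a) and (b) from a single quantitative estimate comparing the average slopes $\mu(u_\e^{\nu_1})$ and $\mu(u_\e^{\nu_2})$ of the cell solutions, valid whenever $\e$ is small relative to $|\nu_1-\nu_2|$ (and, for (b), relative to $|\nu_i-\nu|$). Granting such an estimate, part (a) follows by letting $\e\to 0$, since for each fixed irrational $\nu_i$ one has $\mu(u_\e^{\nu_i})\to\mu(\nu_i)$ by Lemma~\ref{irrational_error}; part (b) follows by comparing the given $u_\e^{\nu_i}$ with $u_{\e'}^{\nu_i'}$ for a conveniently chosen nearby irrational $\nu_i'$ and $\e'\ll\e$, and then invoking Lemma~\ref{irrational_error} for the fixed direction $\nu_i'$ together with (a) to pass from $\mu(\nu_i')$ back to $\mu(\nu_i)$; the admissibility constraints tying $\e,\e',|\nu_i-\nu_i'|$ to $|\nu_i-\nu|$ in each step are what produce the exponent $21/20$. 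The hypothesis \eqref{new} enters at exactly one point below, and — as the Remark following Theorem~\ref{main} indicates — the conclusion fails without it.

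First I would reduce the comparison of two directions to a comparison of two oscillatory problems on a single strip. Let $R\in SO(n)$ be a rotation with $R\nu_2=\nu_1$ and $\|R-I\|\le C|\nu_1-\nu_2|$, and set $v_\e(x):=u_\e^{\nu_1}(Rx)$. Then $v_\e$ solves, on a translate of $\Pi_{\nu_2}(p)$,
\[
G\big(D^2 v_\e,\tfrac{x}{\e}\big)=0,\qquad \partial_{\nu_2}v_\e(x)=g\big(\tfrac{Rx}{\e}\big)\ \hbox{ on the Neumann face},\qquad v_\e=1\ \hbox{ on the Dirichlet face},
\]
where $G(M,y):=F(RMR^{T},Ry)$ is uniformly elliptic and periodic with respect to the lattice $R^{T}\ZZ^n$; crucially, by the rotation invariance \eqref{new} of $\bar F$, the operator $G(\cdot,\cdot/\e)$ homogenizes to the \emph{same} operator $\bar F$ as $F(\cdot,\cdot/\e)$ does. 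The task thus becomes: estimate $|\mu(v_\e)-\mu(u_\e^{\nu_2})|$ for two problems on one strip whose operators and Neumann data differ by a twist of size $\|R-I\|\approx|\nu_1-\nu_2|$.

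For this I would use a three-scale decomposition matching the three error sources named in the introduction: (i) the microscopic oscillation of the Neumann datum $g(\cdot/\e)$; (ii) the twist $R$ between the two periodicity structures; (iii) the $x/\e$-oscillation of the operator. Fix a mesoscopic length $\ell=\ell(\e,|\nu_1-\nu_2|)$ with $\e^{1/20}\ll\ell\ll\min\big(1,\ \e/|\nu_1-\nu_2|\big)$; this window is nonempty precisely in the range of $\e$ relevant to the statement. Split the strip into the \emph{bulk} $\{-1\le (x-p)\cdot\nu_2\le-\ell\}$ and the \emph{boundary layer} $\{-\ell\le (x-p)\cdot\nu_2\le 0\}$. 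In the boundary layer the localization estimate (Lemma~\ref{localization}) controls each solution directly in terms of its Neumann datum, and after translating $\Gamma^{(1)}$ close to $\Gamma^{(2)}$ — legitimate for close irrational normals by Lemma~\ref{lemma-M} — the two data $g(Rx/\e)$ and $g(x/\e)$ differ by only $\lesssim (\ell|\nu_1-\nu_2|/\e)^{\beta}$ on the nearly overlapping Neumann faces, since $Rx$ and $x$ are $O(\ell|\nu_1-\nu_2|)\ll\e$ apart over an $\ell$-patch and $g\in C^{\beta}$. In the bulk, away from the Neumann face, Theorem~\ref{original_convergence} (in the form of Theorem~\ref{ext}) and interior $C^{1,\alpha}$-regularity (Theorem~\ref{lemma-interior}) show that $v_\e$ and $u_\e^{\nu_2}$ are each within a homogenization error of a solution of $\bar F(D^2 w)=0$ with the corresponding side data; since the two homogenized operators coincide — this is where \eqref{new} is used — and the traces on the interface $\{(x-p)\cdot\nu_2=-\ell\}$ differ by $O(|\nu_1-\nu_2|)$ plus the flatness error $\mathcal{E}(\e)$ of Lemma~\ref{Claim 2}, the two bulk profiles, and hence the slopes they transmit to the Dirichlet face, agree up to these errors. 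Matching across the interface and feeding the accumulated discrepancy into the general comparison principle in the strip (Theorem~\ref{general:cp}) — sandwiching $v_\e$ and a suitably shifted $u_\e^{\nu_2}$ between linear functions displaced by that discrepancy, exactly as in the barrier construction of Lemma~\ref{Claim 4} but now with three error contributions — yields
\[
|\mu(v_\e)-\mu(u_\e^{\nu_2})|\ \lesssim\ \mathcal{E}(\e)+(\e/\ell)^{\alpha}+(\ell|\nu_1-\nu_2|/\e)^{\beta}+|\nu_1-\nu_2|,
\]
and optimizing in $\ell$, while tracking how small $\e$ must be for every step above to be legitimate, gives the desired bound.

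The main obstacle is the joint presence of effects (i) and (iii): in the boundary layer one cannot homogenize the operator (the layer is too thin relative to the period of $F$), while in the bulk one cannot discard the oscillation of $g$ (which is exactly what fixes the slope). The resolution is the multi-scale split above, held together by the localization lemmas (Lemma~\ref{localization}, Corollary~\ref{localization2}) and the comparison principle in the strip (Theorem~\ref{general:cp}), which allow each of the three errors to be absorbed on its natural scale into a \emph{linear} correction — a linear function being simultaneously $F(D^2\cdot,x/\e)$- and $\bar F(D^2\cdot)$-harmonic — together with the careful perturbation of the Neumann datum to $g(x/\e)\pm(\text{error})$ needed to build admissible sub- and supersolutions. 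Closing all the exponents so that the admissible window for $\ell$ is nonempty, and so that every constant (in particular the Weyl-type errors) is controlled uniformly in $\nu_i$ and the final rate $\e\le\delta|\nu_i-\nu|^{21/20}$ emerges, is the delicate bookkeeping at the heart of the argument.
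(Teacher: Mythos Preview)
Your rotation strategy --- pulling $u_\e^{\nu_1}$ back by $R$ so that both problems live on the same strip, then using the rotation invariance \eqref{new} to conclude that the two operators share the common homogenized limit $\bar F$ --- is a genuinely different route from the paper's, and it is attractive because it localizes the use of \eqref{new} to a single clean statement. For part (a) alone, with $\nu_1,\nu_2$ fixed and $\e\to 0$, your outline is plausible. The difficulty is part (b), and here your estimate carries a term that the paper's argument is specifically designed to avoid.

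Your displayed bound contains the flatness error $\mathcal{E}(\e)$ from Lemma~\ref{Claim 2}, which encodes the equidistribution rate $\omega_{\nu_2}(\e)$ (and, symmetrically, $\omega_{\nu_1}$). This rate degenerates as $\nu_i$ approaches a rational direction; it is \emph{not} controlled by $|\nu_i-\nu|$ alone. Consequently your estimate cannot yield a convergence rate for $\mu(u_\e^{\nu_i})\to\mu(\nu_i)$ that is uniform in $\nu_i$ near $\nu$, and so the quantitative conclusion $\e\le\delta|\nu_i-\nu|^{21/20}$ in (b) does not follow. Your proposed workaround --- routing through an auxiliary $\nu_i'$ and invoking Lemma~\ref{irrational_error} at that direction --- does not help: Lemma~\ref{irrational_error} itself carries $\omega_{\nu_i'}$, and choosing $\nu_i'$ close to $\nu_i$ (hence close to $\nu$) reproduces the same degeneration.

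The paper circumvents this by never appealing to the equidistribution-based flatness for the varying directions $\nu_i$. Instead it fixes the reference direction $\nu$ and slices the Neumann data into the tangentially periodic profiles $g_k(x')=g(x'+\delta(k-1)\nu)$; on segments $I_k$ of length $N\e$ (with $N=[\delta/|\nu_1-\nu|]$) the true data $g(x/\e)$ agrees with $g_{\bar k}(x'/\e)$ to order $\delta^\beta$ by mere H\"older continuity, and the localized cell problems $w_{\e,k}$ are shown to be flat on the interface $H$ directly from $|\nu_1-\nu|<\delta$ (Lemma~\ref{aa}), with error $C\delta^{\alpha_1}$ and no $\omega_{\nu_1}$. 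A second homogenization of the resulting periodic discrete data $\mu_1,\dots,\mu_m$ in the middle region (Lemmas~\ref{lem35}--\ref{lem36}) then produces a slope $\mu(\bar v_\e)$; here \eqref{new} enters, not to homogenize a rotated operator, but to identify the second-stage problems for $\nu_1$ and $\nu_2$ after a rigid motion of the $\mu_k$ pattern. Patching the three regions into a single barrier gives $|\mu(u_\e^{\nu_i})-\mu(\bar v_\e)|\le C\delta^{\alpha_0}$ with constants depending only on $\delta$ and $\nu$, which is exactly the uniformity needed for (b). In short: the reference-direction decomposition is not a presentational choice but the mechanism by which the Weyl rate $\omega_{\nu_i}$ is eliminated from the estimates.
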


\begin{remark}
In the proof we indeed show that, for  any directions $\nu_1$ and $\nu_2$ satisfying \eqref{small}, the range of $\{\mu(u_\e^{\nu_i})\}_{\e, i}$ fluctuate only by $\delta$, if $\e$ is sufficiently small (see \eqref{conclusion}). The fact that $\nu_i$'s are irrational is only used to guarantee that there is only one subsequential limit for $\mu(u_\e^{\nu_i})$, with $i=1,2$. Note that our statement uses $\nu$ as the reference direction and thus is a different type of estimate than those in Theorem~\ref{thm:planar}).
\end{remark}

\medskip

Thanks to Lemma~\ref{irrational_error}, it is enough to consider the case $p=0$.
The main idea in the proof of Theorem~\ref{continuity} is to use $\nu$ as the reference direction and approximate $g(\frac{x}{\e})$ by piecewise continuous functions, where each continuous parts are ``projections" of $g$ on $\{x\cdot\nu=0\}$ (see further description on the approximation below).  For simplicity of the presentation, we will  prove the theorem in $\R^2$: at the end of the proof will describe the modification required for $\R^n$.

\subsection{Description of the perturbation of boundary data and a sketch of the proof}

First let us describe the main ideas in the proof. We begin by introducing several notations.
For notational simplicity and clarity in the proof, we assume
that $\nu =e_2$: we will explain in the paragraph below how to
modify the notations and the proof for $\nu\neq e_2$. Let us define
$$
\Omega_0:=\Pi_{\nu}(0) = \{ x\in\RR^2: -1 \leq x_2:=x\cdot e_2\leq 0\}
$$
and for $i=1,2$
$$\Omega_i:= \Pi_{\nu_i}(0) =\{x \in \RR^2: -1 \leq x\cdot \nu_i \leq 0\}. $$
Let us also define the family of functions
\begin{equation}\label{neumann:proj}
g_i(x_1, x_2)=g_i(x_1)= g(x_1, \delta(i-1)), \hbox{ where }
i=1,...,m:= [\frac{1}{\delta}]+1.
\end{equation} (see Figure 3). Then $g_i$ is a $1$-periodic function with respect to $x_1$.
\begin{figure}
\center{\epsfig{file=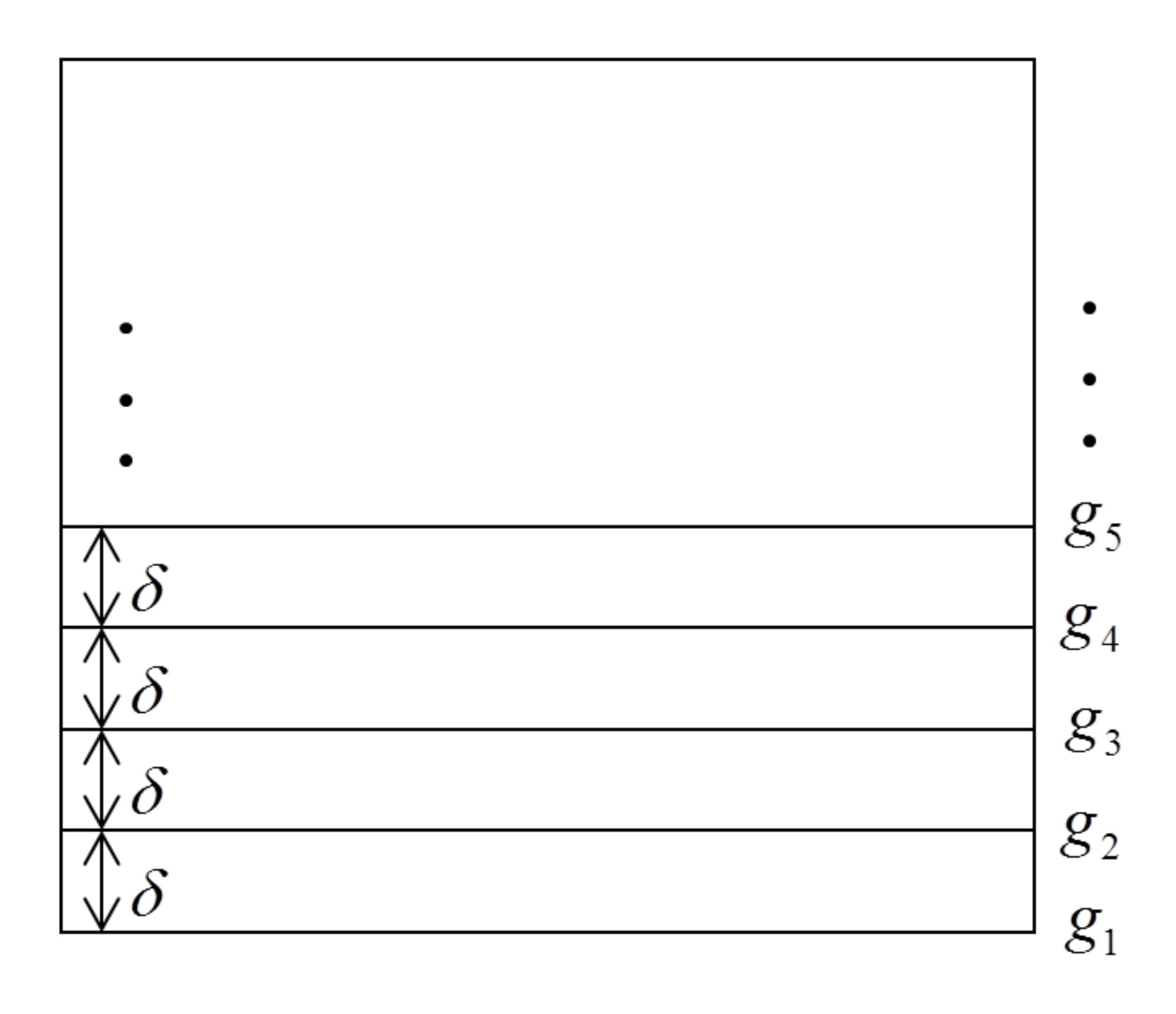,height=2.2in}} \center{Figure 3}
\end{figure}

\medskip

$\circ${\it A remark for $\nu\neq e_2$}:

\medskip

In two dimensions, if $\nu$ is a rational direction different from $e_2$, take the
smallest $K_\nu \in \NN$ such that $K_\nu \nu =0$ mod $\ZZ^2$. Then
we define $g_i(x) = g(x'+\delta(i-1)\nu)$, where $x' = x-x\cdot\nu$, and $g_i$ is a $K_\nu$-periodic function. If $\nu$ is an irrational direction,
take the  smallest $K_\nu \in \NN$ such that $|K_\nu \nu| \leq
\delta$ mod $\ZZ^2$. 
Then $g_i$ as defined above is almost $K_\nu$- periodic up to the
order of $\delta$ with respect to $x'$. We point out that it
does not make any difference in the proof divided in the following two subsections if we replace the
periodicity of $g_i$ by the fact that $g_i$'s are periodic up to the
order $\delta$. 

\bigskip

$\circ$ {\bf Proof by heuristics}

\medskip

Since the domains $\Omega_1$ and $\Omega_2$ point toward different
directions $\nu_1$ and $\nu_2$, we cannot directly compare their
boundary data, even if $\partial \Omega_1$ and $\partial \Omega_2$
cover most part of the unit cell in $\R^n/\Z^n$. To overcome this
difficulty we perform a multi-scale homogenization as follows.

\medskip

First we consider the functions $g_i$ $(i=1,..,m)$, whose profiles
cover most values of $g$ up to the order of $\delta^\beta$, where
$\beta$ is the H\"{o}lder exponent of $g$. Note that most values of
$g$ are taken on $\partial \Omega_1$ and on $\partial \Omega_2$
since $\nu_1$ and $\nu_2$ are irrational directions. On the other
hand, since $\nu_1$ and $\nu_2$ are very close to $e_2$ which is
a rational direction, the averaging behavior of a solution $u_\e$
in $\Omega_1$ (or $\Omega_2$) would appear only after $\e$ gets
very small, as $\nu_1$ (or $\nu_2$) approaches $\nu=e_2$.

\medskip

 Let $N=[\delta/|\nu_1-\nu|]$. If $|\nu_1-\nu|=|\nu_1-e_2|$ is chosen much
smaller than $\delta$,   then  we can say that the Neumann data
$g_1(\cdot/\e)$ is (almost) repeated $N$ times on $\Gamma_0=\{x\cdot\nu_1=0\}$ with period $\e$, up to the error $O(\delta^\beta)$.
(See Figure 4.) Similarly, on the next piece
of the boundary, $g_2(\cdot/\e)$ is (almost) repeated $N$ times
and then $g_3(\cdot/\e)$ is repeated $N$ times: this pattern will
repeat with $g_k$ ($k \in \NN$  mod $m$).
\begin{figure}
\center{\epsfig{file=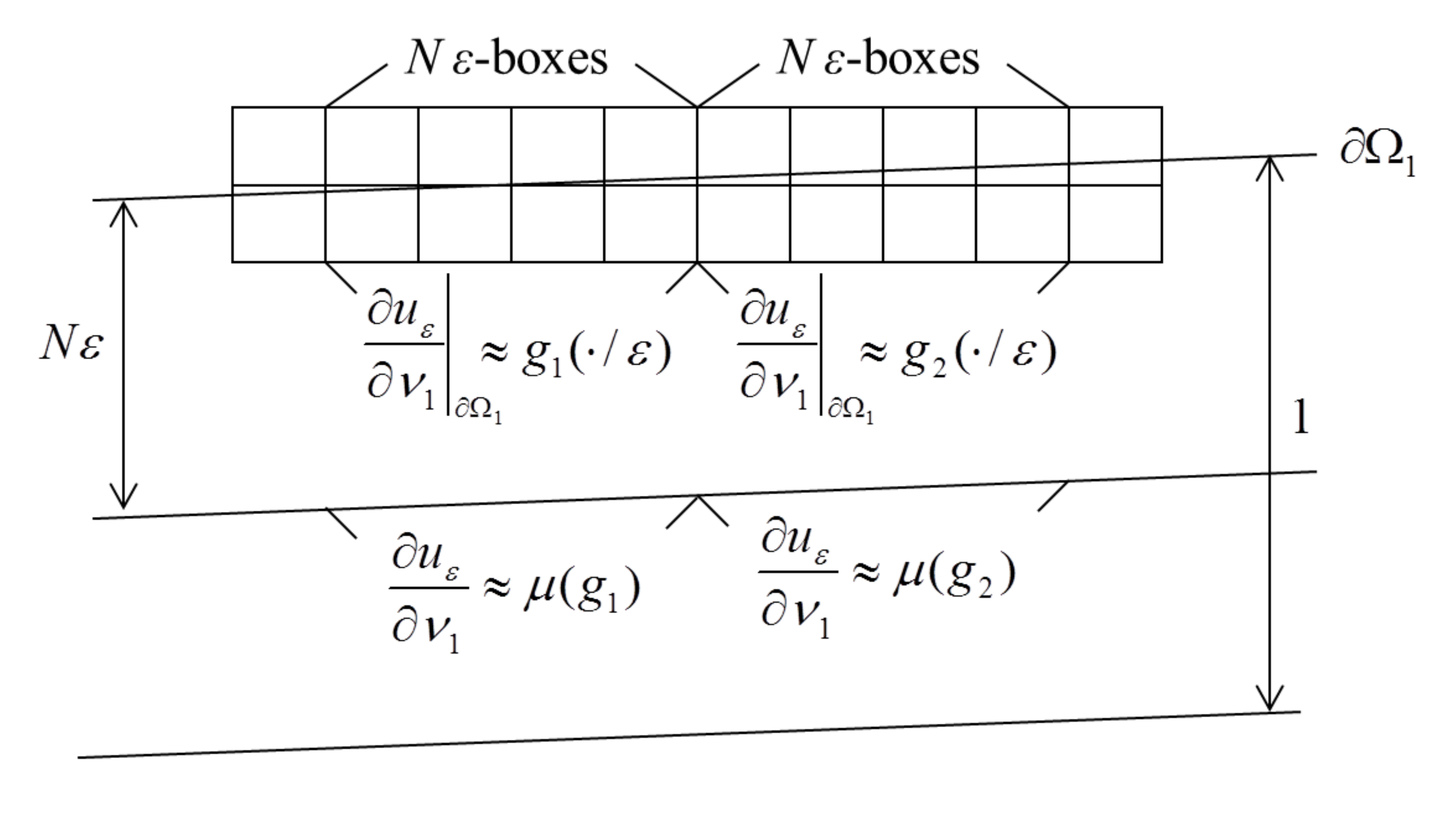,height=2.3in}} \center{Figure 4}
\end{figure}

\medskip

If $N$ is sufficiently large, i.e., if $|\nu_1-\nu|$ is sufficiently
small compared to $\delta$, the solution $u_\e$ of $(P_\e^{\nu_1})$ in $\Omega_1$ will
exhibit averaging behavior, $N\e$-away from $\Gamma_0$.
More precisely, on the hyperplane $H$ located $N\eta$-away from
$\Gamma_0$, $u_\e$ would be homogenized by the repeating
profiles of $g_i$ (for some fixed $i$) with an error of
$O(\delta^\beta)$. This is the first homogenization of $u_\e$ near
the boundary of $\Omega_1$: we denote, by $\mu(g_i)$, the
corresponding values of the homogenized slopes of $u_\e$ on $H$ in each $N\e$-segment.

\medskip

Now more than $N\e$ away from $\partial \Omega_1$, we obtain the
second homogenization of $u_\e$, whose slope is determined by
$\mu(g_i)$, $i=1,..,m$.  In the proof this second homogenization is divided into two parts, in the middle region which is $N\e$ to $KN\e$-away from $\Gamma_0$ and then in the rest of the domain $\Omega_1$. The homogenization argument in the middle region is to ensure that the oscillation of the operator $F$ in $x$-variable does not alter the behavior of the solution too much as $\nu$ varies. 
 Note that, due to \eqref{new} after rotation and reflection we may assume that the arrangement of $\mu(g_1),...\mu(g_m)$ is the same for $\nu_1$ and $\nu_2$. Therefore the second homogenization procedure applied to $\nu_1$ and $\nu_2$ yields that
$|\mu(\nu_1)-\mu(\nu_2)|$ is small.

\vspace{10pt}

Below we will present a rigorous proof for Theorem~\ref{continuity}, with above heuristics in mind.

\subsection{Estimates on localized barriers}

Let $\delta>0$ be given. We will first prove a series of lemmas which analyzes localized versions of $u^\e$ in the three areas  of the domain (near-strip, middle-strip, and inner-strip). Based on the estimates, in Section 4.3 we will then put together the localized barriers to construct appropriate test functions (sub- and supersolutions) of $(P_\e^{\nu_i})$.  The goal is, to compare these test functions with $u_\e^{\nu_i}$ to show the following: for  given $\delta>0$ and for $\nu_i$ and $\e$ satisfying the assumptions given in Theorem~\ref{continuity}, there exists a constant $\mu_0$ such that 
$$
|\mu(u_\e^{\nu_i}) - \mu_0| \leq \delta\hbox{ for } i=1,2.
$$
From above inequality Theorem~\ref{continuity} follows.

\vspace{10pt}

Let $\Omega_i$ and $g_i$ as given above. We continue with the notations. Given $\delta>0$, let $N_0$ be as given in Theorem~\ref{ext}, for Neumann boundary data $f\in C^{\beta}$ with $\max |f|\leq \max |g|$.  

\medskip

Let us  choose irrational unit vectors $\nu_1,
\nu_2 \in \RR^2$ such that
\begin{equation}\label{parameter}
0<\eta_0 \leq \e_0 \leq \min[\delta^{20}, \delta N_0^{-1}],
\end{equation}
 where $\e_0=|\nu_1-e_2|$ and $\eta_0=|\nu_2-e_2|$.
  Let us also define
\begin{equation}\label{steps}
N =[\dfrac{\delta}{\e_0}], \,\,\, M
=[\dfrac{\delta}{\eta_0}].
\end{equation}
and lastly suppose
\begin{equation}\label{order202}
0<\e\leq\delta\eta_0^{21/20}.
\end{equation}
The condition \eqref{order202} is to ensure that $2mN\e <<\delta$, i.e. we consider sufficiently small $\e$  so that $\mu(u_\e^{\nu_i})$ to approach the correct averaged slope $\mu(\nu_i)$.

\medskip

With above definition of $\e$ and $N$, consider the strip regions
$$
I_k=[(k-1)N\e,kN\e] \times \RR \,\,\hbox{  for }\,\, k \in \ZZ.
$$
  Let
  $\bar{k}\in [1,m]$ denote $k$ in modulo $m$, where $m=\displaystyle{[\frac{1}{\delta}]}+1$. Note that, since
$|\nu_1-e_2| <<\delta$, the function $g_{\bar{k}}(x_1/\e)$ defined in \eqref{neumann:proj} is (almost)
repeated $N$ times on $I_k \cap \Gamma_N$, where $\Gamma_N:=\{x\cdot\nu_1=0\}$. This fact and the H\"{o}lder continuity of $g$ yield
that
\begin{equation}\label{eqn1}
|g(\frac{x_1}{\e}, \frac{x_2}{\e}) - g_{\bar{k}} (\frac{x_1}{\e})| <
C\delta^\beta \hbox{ on } \Gamma_1 \cap I_k \quad\hbox{ for
} k\in\ZZ.
\end{equation}

Similarly one can define $\tilde{I}_k:= [(k-1)M\e, kM\e]\times\R$ for $k\in\Z$ to approximate $g$ on $\{x\cdot\nu_2=0\}$.

\medskip

\subsubsection{Estimates for solutions near the Neumann boundary}

\vspace{10pt}

Let us first consider the averaging behavior of solutions near ($N\e$- close to) the Neumann boundary.
For the strip domain $\Pi_\e:= \{-2N\e\leq x\cdot\nu_1 \leq 0\}$, let $u_\e$ solve
$$
\left\{\begin{array}{lll} F(D^2 u_\e, x/\e)=0
&\hbox{ in }& \{-2N\e \leq x\cdot \nu_1 \leq 0\}  \\ \\
\dfrac{\partial u_{\e}}{\partial\nu_1}(x) = g
(\dfrac{x}{\e}) &\hbox{ on }& \{x\cdot\nu_1=0\}\\ \\
u_{\e} = 1 &\hbox{ on } & \{x\cdot\nu_1=-2N\e\}
\end{array}\right.\leqno (N)
$$ 
 Similarly we define $\tilde{u}_{\e}$ in the strip domain
 $$
 \widetilde{\Pi_\e}:=\{-2M\e \leq x\cdot\nu_2\leq 0\}.
 $$

\medskip

First we replace the Neumann data $g$ with the family of functions $g_k$ introduced in \eqref{neumann:proj}.

Let $w_{\e}$ and $\tilde{w}_\e$ be solutions in $\Pi_\e$ and $\widetilde{\Pi_\e}$,
satisfying
$$
\left\{\begin{array}{lll} F(D^2 w_\e, x/\e)=0
&\hbox{ in }& \{-2N\e \leq x\cdot \nu_1 \leq 0\}  \\ \\
\dfrac{\partial w_{\e}}{\partial\nu_1}(x) = g_{\bar{k}}
(\dfrac{x_1}{\e}) &\hbox{ on }& \{x\cdot\nu_1=0\}\cap I_k
\quad\hbox{ for
} k\in\ZZ\\ \\
w_{\e} = 1 &\hbox{ on } & \{x\cdot\nu_1=-2N\e\}
\end{array}\right.
$$
and
$$
\left\{\begin{array}{lll} F(D^2 \tilde{w}_\e, x/\e)=0
&\hbox{ in }& \{-2M\e \leq x\cdot \nu_2 \leq 0\}  \\ \\
\dfrac{\partial \tilde{w}_{\e}}{\partial\nu_2}(x) = g_{\bar{k}}
(\dfrac{x_1}{\e}) &\hbox{ on }& \{x\cdot\nu_2=0\} \cap \tilde{I}_k
\quad\hbox{ for
} k\in\ZZ\\ \\
\tilde{w}_{\e} = 1 &\hbox{ on } & \{x \cdot \nu_2=-2M\e\}.
\end{array}\right.
$$

Next we localize the Neumann boundary data: for given $k\in\ZZ$, let $w_{\e, k}$ and $\tilde{w}_{\e
,k}$ solve
$$
\left\{\begin{array}{lll} F(D^2 w_{\e, k}, x/\e)=0
&\hbox{ in }& \{-2N\e \leq x\cdot \nu_1 \leq 0\}  \\ \\
\dfrac{\partial w_{\e, k}}{\partial\nu_1}(x) = g_{k}
(\dfrac{x_1}{\e}) &\hbox{ on }& \{x\cdot\nu_1=0\} \\ \\
w_{\e, k} = 1 &\hbox{ on } & \{x\cdot\nu_1=-2N\e\}
\end{array}\right.
$$
and
$$
\left\{\begin{array}{lll} 
F(D^2 \tilde{w}_{\e, k}, x/\e)=0
&\hbox{ in }& \{-2M\e \leq x\cdot \nu_2 \leq 0\}  \\ \\
\dfrac{\partial \tilde{w}_{\e, k}}{\partial\nu_2}(x) = g_{k}
(\dfrac{x_1}{\e}) &\hbox{ on }& \{x\cdot\nu_2=0\}\\ \\
\tilde{w}_{\e, k} = 1 &\hbox{ on } & \{x\cdot\nu_2=-2M\e\}
\end{array}\right.
$$

We will show that the profile of $u_\e$ and $\tilde{u_\e}$ does not change too much with above perturbation and localization of the Neumann data.

\begin{lemma} \label{a}
There exists a dimensional constant $C>0$ such that the following holds:
For each $k\in\Z^n$,
$$
|u_\e-w_\e|, \quad |w_{\e } - w_{\e, k}| \leq C\delta^\beta N \e \quad \hbox{ on } \{x\cdot\nu_1=-N\e\}\cap I_k
$$
 and
$$
|\tilde{u}_\e-\tilde{w}_\e|, \quad |\tilde{w}_{\e} -
\tilde{w}_{\e, k}| \leq C\delta^{\beta/2} M \e\quad\hbox{ on } \{x\cdot\nu_2=-M\e\}\cap \tilde{I}_k.
$$
\end{lemma}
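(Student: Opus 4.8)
\textbf{Proof proposal for Lemma~\ref{a}.}

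The plan is to estimate each of the four differences by a comparison-principle argument in the thin strip of width $2N\e$ (resp.\ $2M\e$), where the key point is that all the relevant Neumann data agree up to an error of size $C\delta^\beta$ (resp.\ $C\delta^{\beta/2}$), and then to rescale by $1/\e$ so that Lemma~\ref{localization} can be applied on a domain of width $2N$ (resp.\ $2M$). First I would treat $|u_\e - w_\e|$: the function $v := u_\e - w_\e$ satisfies $-\mathcal{P}^+(D^2 v)\le 0$ and $-\mathcal{P}^-(D^2 v)\ge 0$ in $\Pi_\e$ by \eqref{operator}, it vanishes on the Dirichlet face $\{x\cdot\nu_1 = -2N\e\}$, and by \eqref{eqn1} its Neumann data satisfies $|\partial_{\nu_1} v| = |g(x/\e) - g_{\bar k}(x_1/\e)| \le C\delta^\beta$ on $\{x\cdot\nu_1=0\}\cap I_k$ for every $k$. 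Rescaling $\hat v(x) := \e^{-1} v(\e x)$ puts us on the strip $\{-2N\le x\cdot\nu_1\le 0\}$ with Dirichlet data $0$, Neumann data bounded by $C\delta^\beta$, and (using $1\le g\le 2$ and Lemma~\ref{localization}-type bounds, or the crude bound $|v|\le C N\e$ propagated from the boundary data via a linear barrier in the $\nu_1$ direction) a growth bound of the form $|\hat v|\le (2N)^{2-\e'}$ on the relevant scale. Applying Lemma~\ref{localization} with $R\sim N$ and $\delta_{\mathrm{Lem}} = C\delta^\beta$ gives $|\hat v|\le 2C\delta^\beta + C N^{-\e'}$ near $\{x\cdot\nu_1 = -N\}$; since $N^{-\e'}\le \e^{\e'}\delta^{-\e'} \le \delta^\beta$ for $\e$ small by \eqref{order202}, rescaling back multiplies by $\e$ and yields $|v| \le C\delta^\beta N\e$ on $\{x\cdot\nu_1=-N\e\}\cap I_k$, as claimed.

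The estimate for $|w_\e - w_{\e,k}|$ is identical in structure: on $I_k$ the Neumann data of $w_\e$ is exactly $g_{\bar k}(x_1/\e)$, which equals the Neumann data $g_k(x_1/\e)$ of $w_{\e,k}$ up to replacing $\bar k$ by $k$ — and on $I_k$ we have $\bar k \equiv k \bmod m$, so by $1$-periodicity of $g$ (hence of each $g_j$ and the relation $g_{\bar k} = g_k$ as functions, after accounting for the almost-periodicity of $g_{\bar k}(x_1/\e)$ over the $N$-fold repetition inside $I_k$) the two data differ by at most $C\delta^\beta$ on $I_k\cap\Gamma_N$. Strictly, one compares $w_\e$ and $w_{\e,k}$ only on the slab over $I_k$ using the localization lemma with horizontal aperture $R\sim N$: outside $I_k$ the data differ by $O(1)$, but the $R^{-\e'}$ localization error together with the linear growth bound confines the influence of that far-away discrepancy to size $\lesssim \delta^\beta$ near the central hyperplane. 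This is the one point that needs care, so I would state it as: the difference solves the Pucci inequalities, vanishes on $\Gamma_D$, has Neumann data $\le C\delta^\beta$ on $\Gamma_N\cap I_k$, and is $O(N\e)$ everywhere; Lemma~\ref{localization} applied in the rescaled picture on $\{|x|\le R\}$ with $R = cN$ then gives the bound.

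For the tilde-versions the argument is the same but with $\nu_2$, width $2M\e$, and the partition $\tilde I_k$; the only difference is that the analogue of \eqref{eqn1} for $\nu_2$ carries an error $C\delta^{\beta/2}$ rather than $C\delta^\beta$ — this is because $\eta_0=|\nu_2-e_2|$ is only known to satisfy $\eta_0\le\e_0\le\delta^{20}$, so the number of repetitions $M=[\delta/\eta_0]$ and the corresponding meso-scale width $M\e$ are controlled only through the weaker exponent, forcing $\delta^{\beta/2}$ in place of $\delta^\beta$ when one optimizes the localization error $M^{-\e'}$ against the data error. Carrying out this optimization and rescaling back by $\e$ gives $|\tilde u_\e - \tilde w_\e|, |\tilde w_\e - \tilde w_{\e,k}| \le C\delta^{\beta/2} M\e$ on $\{x\cdot\nu_2=-M\e\}\cap\tilde I_k$.

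The main obstacle is the second point above: since $w_\e$ and $w_{\e,k}$ genuinely differ far from $I_k$, one cannot compare them globally — the whole force of the estimate comes from the quantitative decay $R^{-\e'}$ in Lemma~\ref{localization}, and one must check that with $R$ of order $N$ (resp.\ $M$) this decay beats $\delta^\beta$ (resp.\ $\delta^{\beta/2}$), which is exactly what the smallness assumptions \eqref{parameter}, \eqref{steps}, \eqref{order202} are calibrated to guarantee. Everything else is a routine linear-barrier plus comparison-principle computation.
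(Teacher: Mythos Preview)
Your treatment of $|u_\e-w_\e|$ is more elaborate than necessary: since by \eqref{eqn1} the Neumann-data difference is $\le C\delta^\beta$ on \emph{all} of $\{x\cdot\nu_1=0\}$ (not just on a single $I_k$), the paper simply adds the global linear barriers $\pm C\delta^\beta(x\cdot\nu_1+2N\e)$ and applies comparison, obtaining $|u_\e-w_\e|\le 2C\delta^\beta N\e$ in one line. Your localization route with $R\sim N$ is not wrong here, but it introduces an extraneous error term and then forces you to justify why it is negligible.

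The real gap is in your argument for $|w_\e-w_{\e,k}|$. You correctly note that on $I_k$ the two Neumann data coincide, but you then declare that outside $I_k$ they differ by $O(1)$ and try to absorb this via the localization decay $R^{-\e'}$ with $R\sim N$. That cannot work: Lemma~\ref{localization} requires the Neumann data to be bounded by $\delta_{\mathrm{Lem}}$ on the \emph{entire} portion $\Gamma_N\cap\{|x|\le R\}$, not just on the central strip. With $R$ of order $N$ (in $N\e$-units), that region covers many cycles of $g_1,\dots,g_m$, so the data gap there is genuinely $O(1)$, and the lemma only returns an $O(1)$ bound (i.e.\ $O(N\e)$ after unscaling), not $O(\delta^\beta N\e)$.

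The paper's key observation---which you are missing---is the continuity between adjacent layers: $|g_{j+1}-g_j|\le C\delta^\beta$, so $|g_{k+l}-g_k|\le C|l|\delta^\beta\le C\delta^{\beta/2}$ for $|l|\le\delta^{-\beta/2}$ (this is \eqref{conti:g}). Hence the Neumann-data difference stays $\le C\delta^{\beta/2}$ not just on $I_k$ but on a window of $\sim\delta^{-\beta/2}$ adjacent strips. The localization lemma is then applied with $R=\delta^{-\beta/2}$ (in $N\e$-units), balancing the data error $\delta^{\beta/2}$ against the localization error $R^{-1}=\delta^{\beta/2}$, which yields $|w_\e-w_{\e,k}|\le C\delta^{\beta/2}N\e$. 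This is also why the exponent is $\beta/2$ rather than $\beta$: it comes from this optimization, \emph{not} (as you suggest) from any asymmetry between $\nu_1$ and $\nu_2$ or from the smallness of $\eta_0$ versus $\e_0$. The same argument applies verbatim to $\tilde w_\e-\tilde w_{\e,k}$.
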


\begin{proof} 

We will  only prove the lemma for $u_\e$ and $w_\e$. Let us define 
$$
\Gamma_D:=\{x\cdot\nu_1 = -2N\e\},\quad H:=\{x\cdot\nu_1=-N\e\}, \quad \Gamma_N:=\{x\cdot\nu_1=0\}.
$$

\medskip

1. Since $u_\e= w_\e$ on $ \Gamma_D$, and 
$$|\partial_{\nu_1} u_\e  -
\partial_{\nu_1} w_\e|\leq \delta^{\beta}
$$ on $\Gamma_0$, we obtain
that $|u_\e-w_\e| \leq 2\delta^{\beta} N\e$ by adding linear profiles 
$\pm\delta^{\beta}(x\cdot \nu_1+2N\e)$ to $u_\e$, and to $w_\e$ and then apply the comparison principle to get respective orders.

\medskip

2.  Next we compare $w_\e$ and $w_{\e k}$. Let $p \in I_k \cap
\Gamma_0$. Observe that 
$$
|g_k - g_{k+1}|\leq \delta ^\beta
$$
by the
construction of $g_k$ and by the H\"{o}lder continuity of $g$.
it follows that
\begin{equation}\label{conti:g}
|g_{k+l}-g_k| \leq \delta^{\beta/2} \hbox{ for } -\delta^{-\beta/2} \leq l \leq \delta^{-\beta/2}.
\end{equation}
Above inequality implies that
$$
|\partial_{\nu_1} w_\e -\partial_{\nu_1} w_{\e, k}|
\leq \delta^{\beta/2} \hbox{ in } B_{\delta^{-\beta/2} N\e }(p) \cap
\{x \cdot \nu_1 =0\}.
$$
Lastly note that, since $1\leq g\leq 2$, we have
$$
|w_\e -w_{\e, k} |\leq 2N\e \hbox{ on } \partial B_R(p)\cap \Pi_\e.
$$ 

We would like to apply the re-scaled (replacing $N\e$ as the unit scale) version of Lemma~\ref{localization} to $w_\e$ and $w_{\e ,k}$ in $B_R(p)\cap \Pi_\e$ with  $R=\delta^{-\beta/2}N\e $.  We then obtain
$$
|w_ \e - w_{\e,k}| \leq C\delta^{\beta/2}N\e \hbox{ on } H\cap I_k.
$$

\end{proof}

\medskip

Next we show that $w_{\e, k}$ and $\tilde{w}_{\e, k}$ are almost linear, and that they do not differ from each other very much. Let $\mu(w_{\e, k})$ be given as the approximating slope of 
 $w_{\e, k}$ as in \eqref{linear}, i.e. choose a point $p_k \in H\cap I_k$ and define $\phi_{\e,k}$ by
 $$
 \phi_{\e,k} = \mu(w_{\e,k}) (x\cdot\nu_1+2N\e)+ 1, \quad \mu(w_{\e,k}) = \frac{w_{\e,k}(p)-1}{-N\e}.
 $$

Similarly one can define $\tilde{\phi}_{\e,k}$ and $\mu(\tilde{w}_{\e, k})$.

\begin{lemma} \label{aa} Let $\alpha=\alpha(n,\Lambda,\lambda)$ be as given in Theorem~\ref{lemma-interior}, and let $\alpha_1 = \min[\alpha,\beta]$. Then there exists a dimensional constant $C>0$ such that the following holds true:
Respectively on $\{x\cdot \nu_1=-N\e\}$ and $\{x\cdot\nu_2=-M\e\}$,
\begin{equation}\label{flat}
|w_{\e, k} -\phi_{\e, k}|\leq CN\e\delta^{\alpha_1}\hbox{ and } \quad |\tilde{w}_{\e, k} -\tilde{\phi}_{\e, k}| \leq C M\e \delta^{\alpha_1}.
\end{equation}
Moreover
\begin{equation}\label{baby:continuity}
|\mu(w_{\e, k})-\mu(\tilde{w}_{\e, k})| =C\delta^{\alpha_1/2}.
\end{equation}
\end{lemma}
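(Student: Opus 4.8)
The plan is to obtain the almost-linearity \eqref{flat} by combining the homogenization result of Theorem~\ref{ext} with the localization estimates already available, and then to deduce the comparison \eqref{baby:continuity} of the slopes by setting up a common reference profile. First I would rescale $w_{\e,k}$ by the factor $N\e$, i.e. set $v(x) := \frac{1}{N\e}\big(w_{\e,k}(N\e\,x) - 1\big)$ on the strip $\{-2 \le x\cdot\nu_1 \le 0\}$. Then $v$ solves $F(D^2 v, \tfrac{N\e}{\e}\,x) = F(D^2 v, N x)=0$ with Neumann data $g_k(\tfrac{N x_1}{N\e}) = g_k(x_1/\e)$ on $\{x\cdot\nu_1=0\}$ — but more to the point, after the additional rescaling that makes the Neumann period equal to one, we are precisely in the setting of Theorem~\ref{ext} with a large parameter $N' \sim N$ (coming from the ratio of the strip width $N\e$ to the oscillation period $\e$), and with Neumann data $g_k \in C^\beta$ with $\|g_k\|_\infty \le \max g \le 2$. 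Since $\e \le \delta \eta_0^{21/20}$ and $N = [\delta/\e_0]$ with $\e_0 \ge \eta_0$, the parameter $N$ is large enough (at least $N_0(\delta)$) that Theorem~\ref{ext} applies: the rescaled $v$ is within $\delta^{\alpha_1}$ (in sup norm on the unit-width slab) of the solution $\bar v$ of the homogenized problem $\bar F(D^2\bar v)=0$ with the same boundary conditions. Because $\bar F$ depends only on eigenvalues and $g_k$ depends only on $x_1$, $\bar v$ is itself the linear profile joining the Dirichlet value to the average of $g_k$ — this is the key place the assumption \eqref{new} enters — so $\bar v$ is exactly linear and its slope is $\mu(\bar v)$, the homogenized Neumann slope for $g_k$. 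Undoing the rescaling converts the $\delta^{\alpha_1}$ sup-norm bound on $v - \bar v$ into the bound $CN\e\,\delta^{\alpha_1}$ on $\{x\cdot\nu_1 = -N\e\}$ in \eqref{flat}; since $\mu(w_{\e,k})$ is defined as the secant slope through $p_k \in H\cap I_k$, this flatness estimate also controls $|w_{\e,k} - \phi_{\e,k}|$ on the whole half-strip down to the Dirichlet boundary via the comparison principle (adding linear errors $\pm C\delta^{\alpha_1}(x\cdot\nu_1 + 2N\e)$). The estimate for $\tilde w_{\e,k}$ is identical, with $M$ in place of $N$ (the slightly worse exponent $\alpha_1/2$ versus $\alpha_1$ in the final claim comes from also absorbing the $\delta^{\beta/2}$-type errors of Lemma~\ref{a}).

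For \eqref{baby:continuity}, the observation is that after the rescalings above, both $w_{\e,k}$ and $\tilde w_{\e,k}$ are approximated by linear profiles whose slopes are, up to $O(\delta^{\alpha_1})$, the homogenized slope $\mu(\bar v)$ associated to the same one-variable data $g_k$ and the same homogenized operator $\bar F$ — the only difference between the two problems is the normal direction ($\nu_1$ versus $\nu_2$) and the window length ($N\e$ versus $M\e$), and since $\nu_1,\nu_2$ are both within $\e_0 \le \delta^{20}$ of $e_2$, after the rotation/reflection permitted by \eqref{new} the homogenized problems coincide up to errors of order $\delta$. Concretely I would compare $w_{\e,k}$ (rescaled) and $\tilde w_{\e,k}$ (rescaled) both against the single linear function $\ell(x) = \mu(\bar v)\,x\cdot e_2$; each is within $C\delta^{\alpha_1/2}$ of $\ell$ on the relevant slab by the previous paragraph together with the closeness of $\nu_i$ to $e_2$, and subtracting gives $|\mu(w_{\e,k}) - \mu(\tilde w_{\e,k})| \le C\delta^{\alpha_1/2}$.

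The main obstacle I anticipate is bookkeeping the scales so that Theorem~\ref{ext} genuinely applies with a large enough parameter: one must check that the effective homogenization parameter is $\gtrsim N_0(\delta)$ uniformly in $\e$ and in the (irrational, nearly-rational) directions $\nu_i$, which is exactly what the hypotheses \eqref{parameter}, \eqref{steps} and \eqref{order202} are arranged to guarantee ($2mN\e \ll \delta$, $\e_0 \le \delta N_0^{-1}$). A secondary subtlety is that for $\nu_1 \ne e_2$ the functions $g_k$ are only $K_\nu$-periodic (or periodic up to $O(\delta)$, as noted in the remark for $\nu\ne e_2$), so the "homogenized slope of $g_k$" must be interpreted as the slope produced by Theorem~\ref{thm:planar}/Theorem~\ref{ext} for that almost-periodic data; the error this introduces is again of order $\delta^\beta$ and is absorbed into the stated bounds. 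Finally, one must be careful that $\mu(w_{\e,k})$ was defined as a one-point secant slope rather than a genuine average, so the flatness estimate \eqref{flat} is needed first to justify that this secant slope is within $O(\delta^{\alpha_1})$ of the true homogenized slope before \eqref{baby:continuity} can be concluded.
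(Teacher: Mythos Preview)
Your approach has a genuine gap at the first step. After rescaling by $N\e$, the function $v(y)=\frac{1}{N\e}\big(w_{\e,k}(N\e\,y)-1\big)$ solves $F(D^2v,Ny)=0$ on $\{-2\le y\cdot\nu_1\le 0\}$ with Neumann data $g_k(Ny_1)$, not $g_k(y_1)$: the boundary data oscillates at the \emph{same} scale $1/N$ as the operator. Theorem~\ref{ext} requires the Neumann data $f$ to be H\"older with norm independent of $N$ (this is what makes $N_0$ uniform), so it does not apply here---the H\"older norm of $g_k(N\cdot)$ is of order $N^{\beta}$ and the argument becomes circular. And even if you could pass to the homogenized operator, the limiting problem $\bar F(D^2\bar v)=0$ would still carry the oscillatory Neumann data $g_k(Ny_1)$; rotation invariance of $\bar F$ does \emph{not} force $\bar v$ to be linear when the boundary data is non-constant. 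In short, you are trying to homogenize the operator while simultaneously averaging the Neumann data, which is exactly the coupled problem the paper is set up to analyze, not something Theorem~\ref{ext} handles in one stroke.

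The paper's argument for \eqref{flat} bypasses homogenization entirely. The key point is that $g_k$ is constant in the $x_2$-variable, so if $x_0,y\in H=\{x\cdot\nu_1=-N\e\}$ one can find $z$ with $x_0-z\in\e\Z^2$, $|z_1-y_1|\le\e$ and $\mathrm{dist}(z,H)\le\delta\e$ (using only $|\nu_1-e_2|<\delta$); the translated function $\psi(x)=w_{\e,k}(x-(x_0-z))$ still solves the same PDE, and because $g_k$ ignores $x_2$ the Neumann data changes by at most $C\delta^{\alpha_1}$ via the $C^{1,\alpha}$ estimate of Theorem~\ref{thm:reg2}. Comparison with $w_{\e,k}\pm(C\delta^{\alpha_1}(x\cdot\nu_1+2N\e)+\delta\e)$ then gives $|w_{\e,k}(x_0)-w_{\e,k}(z)|\le CN\e\,\delta^{\alpha_1}$, and interior Lipschitz regularity bridges $z$ to $y$. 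For \eqref{baby:continuity} the paper introduces an intermediate $\rho_\e$ solving the $g_k$-problem on the $\nu_2$-strip of width $2N\e$, compares $\rho_\e$ with $w_{\e,k}$ on overlapping strips (the boundaries differ by $O(R\delta\e)$ in $B_{RN\e}$), and applies Lemma~\ref{localization} with $R=\delta^{-1/2}$ to produce the exponent $\alpha_1/2$; a separate barrier argument as in Lemma~\ref{Claim 4} then links $\rho_\e$ to $\tilde w_{\e,k}$. None of this uses Theorem~\ref{ext} or the assumption \eqref{new} on $\bar F$.
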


\begin{proof}
1. We will prove the first inequality on $w_{\e, k}$.  Fix $x_0\in H$.  Due to the fact that $|\nu_1-e_1|<\delta$,  the following holds: for any $y \in H$, there exists $z$ such that
\begin{equation} \label{lp}
 x_0=z \hbox{ mod } \e \ZZ^2,
\,\,\,\,\, |z_1-y_1| \leq \e, \,\,\,\,\, {\rm dist}(z, H) \leq
\delta \e.
\end{equation}
 Consider the function
$$
\psi_{\e, k}(x) =w_{\e, k}(x-(x_0-z)),
$$
which still solves $F(D^2\psi, \frac{x}{\e})=0$ in $\Pi_\e +(x_0-z)$.
We will compare $w_{\e, k}$
and
 $\psi_{\e, k}$ in the domain
 $$
 \Sigma:=\Pi_\e \cap (\Pi_\e + (x_0-z)).
 $$
Without loss of generality, we may assume
$$
\Sigma = \{-2N\e + \delta_0
\e \leq x \cdot \nu_1 \leq 0 \},
$$ where $0 \leq \delta_0 \leq |(x_0-z)\cdot \nu_1|\leq
\delta$.
  Observe that comparison with linear profiles yields that
 $$
  |w_{\e, 
  k} -\psi_{\e, k}| \leq \delta \e \hbox{ on }\{x \cdot \nu_1 =-2N\e +
  \delta_0\e\}.$$
Moreover, since $g_k(x)$ is constant in $x_2$-variable, the last property of (\ref{lp}) and Theorem~\ref{thm:reg2} yields that
  $$
  |\partial _{\nu_1} w_{\e, k}-\partial_{\nu_1} \psi_{\e, k}| \leq C \delta^{\alpha_1} \hbox{ on }\{x \cdot \nu_1 =0\}.
  $$ 
 
Let $h=C\delta^\alpha (x\cdot \nu_1 +2N\e) + \delta \e $. Then by
applying the comparison principle in $\Sigma$ we have 
$$
w_{\e, k} \leq \psi_{\e, k} +h \hbox{ and } \psi_{\e, k} \leq
w_{\e, k} +h\hbox{ in } \Sigma.
$$
 Since $0 \leq h\leq 2N \e \delta^\alpha$ on $H:=\{x\cdot \nu_1=-N\e\}$, we
get
$$ |w_{\e, k}(x_0) -w_{\e, k}(z)|= |w_{\e,k}(x_0) - \psi_{\e,k}(x_0)| \leq 2N \e \delta^{\alpha_1}\hbox{ on } H.$$
Now we conclude that 
\begin{eqnarray} \label{Claim22}
|w_{\e, k}(x_0)- w_{\e, k}(y)|&\leq& |w_{\e, k}(x_0)- w_{\e, k}(z)|+|w_{\e, 
k}(z)- w_{\e, k}(y)| \nonumber \\&\leq& 2N \e \delta^{\alpha_1} + |w_{\e, 
k}(z)- w_{\e, k}(y)|  \nonumber
\\
&\leq &  2N \e \delta^{\alpha_1} + C|z-y|
\nonumber\\
&\leq & 3N \e \delta^{\alpha_1},
 \end{eqnarray}
where the last inequality follows since $N \delta^\alpha \sim
\delta^{1+\alpha}/\e_0>1$.

\medskip

3. To prove the statement for $\tilde{w}_\e$, one can argue as above, by replacing $I_k$ by $\tilde{I}_k$ and $\Pi_\e$ by $\tilde{\Pi}_\e$, and $H$ with $\tilde{H}:= \{x\cdot\nu_2=-M\e\}$.

\medskip

4. We proceed to prove \eqref{baby:continuity}. Recall that $M>N$. First we compare $w_{\e,k}$ with $\rho_\e$, solving
$$
\left\{\begin{array}{lll}
F(D^2\rho_\e,\frac{x}{\e}) =0 &\hbox{ in }& \{-2N\e \leq x\cdot\nu_2\leq 0\},\\  \\
\partial_{\nu_2} \rho_\e = g_k(\frac{x}{\e}) & \hbox{ on } &\{x\cdot\nu_2=0\},\\ \\
\rho_\e = 1 &\hbox{ on }& \{x\cdot\nu_2 = -2N\e\}
\end{array}\right.
$$
We will show that 
\begin{equation}\label{first0}
|\rho_\e - w_{\e, k } | \leq C\delta^{\alpha_1/2} N\e \hbox{ in } \{-2N\e \leq x\cdot\nu_2\leq 0\}.
\end{equation}
If \eqref{first0} holds, then we can conclude by barrier arguments as given in the proof of Lemma~\ref{Claim 4} to show that $\rho_\e$ and $\tilde{\omega}_\e$ are close.

Due to \eqref{flat}, it is enough to argue in the $N\e$-neighborhood of the origin.
 Note that the Neumann boundary of $\rho_\e$ and $w_{\e, k}$ only differ by $R\delta\e$ in $RN\e$-neighborhood of the origin. Therefore  Theorem~\ref{thm:reg2} yields that
\begin{equation}\label{eqno1}
|\partial_{\nu_1}\rho_\e- g_k(\frac{x}{\e})| \leq (R\delta)^{\alpha_1} \hbox{ on }\{x\cdot\nu_1=0\} \cap \{|x|\leq RN\e\}.
\end{equation}

Moreover, since $1\leq g\leq 2$, we have $|\rho_\e- w_{\e,k}|\leq 2N\e$ on $\{x\cdot\nu_1 = -2N\e\}$. Therefore \eqref{eqno1} yields that
\begin{equation}\label{eqno2}
|\rho_\e- w_{\e,k} | \leq  C(R\delta)^{\alpha_1} N\e \quad\hbox{ on }\{ x\cdot\nu_1 = -N\e\}\cap I_{k+j}, |j|\leq R.
\end{equation}

Now due to \eqref{eqno1} and \eqref{eqno2},  Lemma~\ref{localization} yields that 
$$
|\rho_\e- w_{\e, k} | \leq C((R\delta)^{\alpha_1}+R^{-2})N\e.
$$
Let us take $R=\delta^{-1/2}$ to conclude.

\end{proof}

The following Corollary is immediate from Lemma~\ref{a} and Lemma~\ref{aa} as well as Theorem~\ref{thm:reg2}.

\begin{corollary} \label{coco}
Denote $\mu_k:=\mu(w_{\e, k}) $, then for $\alpha_1$ given as in Lemma~\ref{aa}
$$|\frac{\partial }{\partial\nu_1}u_\e  - \mu_k|\leq \delta^{\alpha_1} \hbox{ on
} H \cap I_k.$$
\end{corollary}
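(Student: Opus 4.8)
The plan is to obtain Corollary~\ref{coco} by combining the three preceding estimates with the boundary regularity of Theorem~\ref{thm:reg2}, read off at the level of the normal derivative on the intermediate hyperplane $H=\{x\cdot\nu_1=-N\e\}$. First I would recall that, by Lemma~\ref{aa}, the localized solution $w_{\e,k}$ is $CN\e\delta^{\alpha_1}$-close to its average linear profile $\phi_{\e,k}=\mu_k(x\cdot\nu_1+2N\e)+1$ on $H$; since $w_{\e,k}$ solves $F(D^2w_{\e,k},x/\e)=0$ with $C^\beta$ Neumann data $g_k(x_1/\e)$, it is $C^{1,\alpha_1}$ up to the Neumann boundary by Theorem~\ref{thm:reg2}(b), so the difference $w_{\e,k}-\phi_{\e,k}$ (a solution of the same equation vanishing in the linear part) is controlled in $C^{1,\alpha_1}$ by its sup norm on a ball of radius comparable to $N\e$. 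After the natural rescaling $x\mapsto x/(N\e)$, the $L^\infty$ bound $CN\e\delta^{\alpha_1}$ becomes $C\delta^{\alpha_1}$ for the rescaled function, and interior-type $C^1$ estimates upgrade this to $|\partial_{\nu_1}w_{\e,k}-\partial_{\nu_1}\phi_{\e,k}|=|\partial_{\nu_1}w_{\e,k}-\mu_k|\le C\delta^{\alpha_1}$ on (a slightly shrunk version of) $H\cap I_k$.

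Next I would transfer this from $w_{\e,k}$ to $w_\e$ and then to $u_\e$. By Lemma~\ref{a}, $|u_\e-w_\e|$ and $|w_\e-w_{\e,k}|$ are both bounded by $C\delta^\beta N\e$ on $H\cap I_k$; moreover all three functions satisfy the same elliptic relations $-\mathcal P^+(D^2(\cdot))\le 0\le -\mathcal P^-(D^2(\cdot))$ (their pairwise differences do, by \eqref{operator}), so the sup-norm control over a ball of size $\sim N\e$ around a point of $H\cap I_k$ again rescales and, via Theorem~\ref{thm:reg2}(a) applied to the differences, yields a $C^{\alpha}$ — hence after one more rescaling a gradient — estimate of order $\delta^\beta\le\delta^{\alpha_1}$ for $\partial_{\nu_1}(u_\e-w_\e)$ and $\partial_{\nu_1}(w_\e-w_{\e,k})$ on $H\cap I_k$. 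Summing the three contributions gives
\[
\Bigl|\frac{\partial}{\partial\nu_1}u_\e-\mu_k\Bigr|\le C\delta^{\alpha_1}\quad\text{on }H\cap I_k,
\]
which is the assertion (with the constant absorbed as stated).

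There is one bookkeeping point to be careful about: the $L^\infty$-to-$C^1$ passage for a solution of a fully nonlinear equation with a lower-order $x/\e$ oscillation needs the estimate to be applied at the correct scale, namely on balls of radius a fixed fraction of $N\e$ and then rescaled so the coefficient oscillation has period one; this is exactly the setting of Theorem~\ref{thm:reg2} and Theorem~\ref{lemma-interior}, and the factor $N\e$ cancels between the $L^\infty$ bound and the rescaling of $\partial_{\nu_1}$, so no powers of $N\e$ survive. The main (very mild) obstacle is therefore just ensuring that all comparison/flatness estimates from Lemmas~\ref{a}–\ref{aa} are available on a ball of the same scale $N\e$ centered at an interior point of $H\cap I_k$ — the strip $\Pi_\e$ has half-width $2N\e$, so such a ball fits and touches the Neumann boundary, which is precisely where Theorem~\ref{thm:reg2}'s boundary $C^{1,\alpha}$ estimate is needed rather than the purely interior one. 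Once that is noted, the corollary is immediate.
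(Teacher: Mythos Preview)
Your overall strategy—assemble the $L^\infty$ closeness from Lemmas~\ref{a} and~\ref{aa} and then upgrade to a normal-derivative bound via the regularity theorems—is exactly what the paper intends. However, there is a genuine gap in your second paragraph. The pairwise differences $u_\e-w_\e$ and $w_\e-w_{\e,k}$ satisfy only the Pucci extremal inequalities \eqref{operator}, not a single equation $F(D^2\cdot,x/\e)=0$. For such functions Theorem~\ref{thm:reg2}(a) (and Theorem~\ref{lemma-reg}) gives at most a $C^\alpha$ estimate, and ``$C^\alpha$ hence after one more rescaling a gradient estimate'' is simply false: H\"older control of a function does not bound its derivative, at any scale. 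So the passage from $|u_\e-w_\e|\le C\delta^\beta N\e$ to $|\partial_{\nu_1}(u_\e-w_\e)|\le C\delta^\beta$ is unjustified as written. (Your remark that the ball of radius $N\e$ centered on $H$ touches the Neumann boundary does not help: Theorem~\ref{thm:reg2}(b) requires a single equation $F=0$, and its constant involves the $C^\beta$ norm of the Neumann data, which after rescaling by $N\e$ is of order $N^\beta$ and swamps the smallness.)

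The fix is to avoid the pairwise differences altogether and work with $u_\e-\phi_{\e,k}$ in one stroke. Since $\phi_{\e,k}$ is affine and $F(0,\cdot)\equiv 0$ by (F1), the function $v:=u_\e-\phi_{\e,k}$ solves the \emph{full} equation $F(D^2v,x/\e)=0$, so the interior $C^{1,\alpha}$ estimate of Theorem~\ref{lemma-interior} (whose constant depends only on $n,\lambda,\Lambda$) is available. Combining Lemmas~\ref{a} and~\ref{aa} gives $|v|\le CN\e\,\delta^{\alpha_1}$ on $H\cap I_k$; since $v=0$ on the Dirichlet boundary $\{x\cdot\nu_1=-2N\e\}$, comparison with the linear barriers $\pm C\delta^{\alpha_1}(x\cdot\nu_1+2N\e)$ (together with the localization of Lemma~\ref{localization} to handle neighboring $I_{k\pm l}$, where $|\mu_{k\pm l}-\mu_k|\le |l|\delta^\beta$ as in \eqref{data:flat}) propagates this bound to a full $N\e$-ball about any point of $H\cap I_k$. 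Rescaling $x\mapsto x/(N\e)$ and applying Theorem~\ref{lemma-interior} then yields $|\partial_{\nu_1}v|=|\partial_{\nu_1}u_\e-\mu_k|\le C\delta^{\alpha_1}$ on $H\cap I_k$, which is the corollary (up to a harmless adjustment of the exponent).
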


\subsubsection{Estimates for solutions in the middle region}

From the previous section we have seen that the solutions $\tilde{u}_\e$ and $\tilde{u}_\e$ average in the unit of $N\e$, when the solution is $N\e$-away from the Neumann boundary, and moreover that they average to a similar value. If the operator $F$ was homogeneous we could use this result and perform a second homogenization, and use \eqref{new} and \eqref{baby:continuity} to conclude. However we have to be careful with the inhomogeneities of $F$, such that different normal directions of the hyperplanes pointing toward $\nu_1$ and $\nu_2$ does not make a difference in the way the respective solutions average out. This is what we are going to analyze in this subsection.

\vspace{10pt}

 Let $K=\frac{1}{\delta}$, and let us consider the domain 
 $$\Sigma=\{x\in \RR^2: -2KmN\e\leq
x \cdot \nu_1 \leq -N\e \} .
$$ 
Note that by \eqref{order202} $2KmN\e \leq 1$. Let $\mu_k$ as given in Corollary~\ref{coco}, and let $v_\e$ solve
$$
\left\{\begin{array}{lll} F(D^2 v_\e, \cdot/\e)=0
&\hbox{ in }& \Sigma \\ \\
\dfrac{\partial v_{\e}}{\partial\nu_1} = \mu_k &\hbox{ on }& H
\cap I_k \quad\hbox{ for
} k\in\ZZ\\ \\
v_{\e} = 1 &\hbox{ on } & \{x \cdot \nu_1=-2KmN\e\}.
\end{array}\right.
$$
Next let $\bar{v}_\e$ solve
$$
\left\{\begin{array}{lll} \bar{F}(D^2 \bar{v}_\e)=0
&\hbox{ in }& \Sigma \\ \\
\dfrac{\partial \bar{v}_{\e}}{\partial\nu_1} = \mu_k &\hbox{ on
}& H \cap I_k \quad\hbox{ for
} k\in\ZZ\\ \\
\bar{v}_{\e} = 1 &\hbox{ on } & \{ x\cdot \nu_1=-2KmN\e\}.
\end{array}\right.\leqno (\bar{P}^{\nu_1}_\e)
$$

We will show that $v_\e$ and $\bar{v}_\e$ are close to each other.
\begin{lemma} \label{lem35} 
There exists $\alpha = \alpha(n,\Lambda, \lambda)$ such that 
\begin{equation}\label{first}
|v_\e(x)-\bar{v}_\e(x)|\leq\delta^{\alpha} N\e \quad\hbox{ in
} \Sigma.
\end{equation}
\end{lemma}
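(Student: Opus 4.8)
\emph{Proof plan.} The plan is to read Lemma~\ref{lem35} as a pure operator-homogenization statement: $v_\e$ and $\bar v_\e$ carry \emph{identical} boundary data (the step datum $\mu_k$ on $H\cap I_k$ and the constant Dirichlet datum $1$ on $\{x\cdot\nu_1=-2KmN\e\}$), and differ only in that $v_\e$ solves $F(D^2\cdot,x/\e)=0$ while $\bar v_\e$ solves $\bar F(D^2\cdot)=0$. So the estimate should follow from the homogenization results of Section~2 (Theorem~\ref{ext}, Theorem~\ref{original_convergence}), after a rescaling that normalizes the cell width, provided one handles the roughness of the Neumann datum and the $\delta$-dependent size of $\Sigma$ with care. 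Concretely, I would set $y=x/(N\e)$ and $V(y):=\tfrac1{N\e}v_\e(N\e y)$, $\bar V(y):=\tfrac1{N\e}\bar v_\e(N\e y)$. By homogeneity (F1), $V$ solves $F(D^2V,y/\e')=0$ and $\bar V$ solves $\bar F(D^2\bar V)=0$ on the strip $\widehat\Sigma:=\{-2Km\le y\cdot\nu_1\le-1\}$, with common microscale $\e'=1/N$, common Neumann datum $\partial_{\nu_1}V=\partial_{\nu_1}\bar V=\mu_{\bar k}$ on the image of $H\cap I_k$, and a common constant Dirichlet datum on the bottom face (which may be normalized to $1$). By \eqref{parameter} one has $N\ge\delta/\e_0-1\ge\tfrac12\delta^{-19}$, so $\e'\le 2\delta^{19}$ is genuinely small, and since undoing the scaling multiplies $L^\infty$-distances by $N\e$, it suffices to prove $\|V-\bar V\|_{L^\infty(\widehat\Sigma)}\le\delta^\alpha$.

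Next I would localize so as to reduce to a constant (or absent) Neumann datum. Cover $\widehat\Sigma$ by the vertical boxes sitting over each unit cell together with interior boxes that do not touch the Neumann boundary. Over a cell the Neumann datum equals the \emph{constant} $\mu_k$, and on the interior boxes there is no boundary condition; moreover $V,\bar V$ are uniformly bounded (by $|\mu_k|\le\max|g|$, the Dirichlet value, and comparison with linear profiles as in Lemma~\ref{localization}). After rescaling each box to unit size (where the microscale is $\e'\to0$), I would use the localization decay (Corollary~\ref{localization2}) to discard the unknown lateral data, then apply Theorem~\ref{ext} on the boundary boxes and Theorem~\ref{original_convergence} on the interior ones, and identify the resulting local $\bar F$-limit with $\bar V$ via the comparison principle for $\bar F$. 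This gives $|V-\bar V|\le\omega(\e')$ on each box, with $\omega$ the homogenization modulus; using the quantitative rate $\omega(\e')\le C(\e')^{\alpha_0}$ of \eqref{estimate100} on the unit-size boxes (where $C$ is universal), $|V-\bar V|\le C(\e')^{\alpha_0}\le C\delta^{19\alpha_0}$ locally. Finally I would patch these local estimates across all of $\widehat\Sigma$ using the general comparison principle (Theorem~\ref{general:cp}) together with Corollary~\ref{localization2}; since $\widehat\Sigma$ has width $2Km\le C\delta^{-2}$, the patching costs at most a fixed negative power of $\delta$, which is absorbed into $\delta^{19\alpha_0}$ by taking $\alpha=\alpha(n,\lambda,\Lambda)$ small (say $\alpha\le\alpha_0$). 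Undoing the scaling then yields $|v_\e-\bar v_\e|\le\delta^{\alpha}N\e$ in $\Sigma$.

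The main obstacle is precisely the two points just flagged. First, the Neumann datum $\mu_{\bar k}$ is only piecewise constant, so the homogenization theorems — stated for continuous data — cannot be invoked globally on $\widehat\Sigma$ (a global smoothing of $\mu_{\bar k}$ has too large a H\"older norm, since its jumps live on the cell scale and the strip is wide); the fix is to work cell-by-cell, where the datum is genuinely constant, and to kill the unknown lateral data with the localization lemma. Second, because $\widehat\Sigma$ grows like $\delta^{-2}$, one must verify that the homogenization error, after being patched over $O(\delta^{-2})$ boxes, is still beaten by the smallness $\e'=1/N\le 2\delta^{19}$; this is exactly where the quantitative rate \eqref{estimate100} with a \emph{universal} constant on unit-size domains is needed, and where the concrete exponents in \eqref{parameter} enter. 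Apart from these, the argument is a routine rescaling-plus-comparison bookkeeping.
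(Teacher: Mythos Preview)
Your rescaling $V(y)=(N\e)^{-1}v_\e(N\e y)$ is exactly what the paper does, but from there the paper's proof is essentially one line: replace the step data $\{\mu_k\}$ by a H\"older interpolant and apply Theorem~\ref{ext} globally on $\widehat\Sigma$, using that $N\ge N_0$ by \eqref{parameter}. Your stated reason for avoiding this---that a global smoothing ``has too large a H\"older norm''---is incorrect. The jumps are small: $|\mu_{k+1}-\mu_k|\le C\delta^\beta$, because $|g_{k+1}-g_k|\le C\delta^\beta$ by the $C^\beta$ regularity of $g$ and the map from Neumann data to slope is $1$-Lipschitz by comparison with linear profiles (this is made explicit later in \eqref{data:flat}). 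Hence a linear interpolation over a fixed fraction of each unit cell has $C^\beta$ norm bounded uniformly in $\delta$, and Theorem~\ref{ext} applies directly; the constant $N_0$ may depend on $\delta$ through the strip width $2Km\sim\delta^{-2}$, and the choice $\e_0\le\delta N_0^{-1}$ in \eqref{parameter} is made precisely to guarantee $N\ge N_0$.

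Your cell-by-cell route, by contrast, has a real gap. To ``discard the unknown lateral data'' you need a localization radius $R$ large enough that $R^{-\e}$ beats the a~priori bound $|V-\bar V|=O(Km)\sim\delta^{-2}$, yet on any box of radius $R>1$ the Neumann datum already spans $R$ cells and is no longer a single constant $\mu_k$. Moreover $V-\bar V$ does not itself satisfy the Pucci extremal inequalities (the two functions solve \emph{different} operators), so Corollary~\ref{localization2} cannot be applied to their difference; you would have to pass through an auxiliary $\bar F$-solution carrying $V$'s lateral values, and comparing that auxiliary with $\bar V$ is exactly where the large-$R$ requirement reappears. Resolving this tension forces you to interpolate across several cells anyway, at which point the paper's one-step global application of Theorem~\ref{ext} is both simpler and what is actually intended.
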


\begin{proof} 
 Let us apply Theorem~\ref{ext} with $f(x)=\mu_k$ on $H\cap I_k$ (or its interpolation so that $f(N\e x)$ is H\"{o}lder continuous)  to the re-scaled function 
 $$
 u_N(x) = (N\e)^{-1}v_\e(\frac{x}{N\e}).
 $$ Due to \eqref{parameter}, $N \geq N_0$, and thus Theorem~\ref{ext} yields ~\eqref{first}.
\begin{equation}\label{first}
|v_\e(x)-\bar{v}_\e | \leq \delta N\e \hbox{ in } \Sigma_1.
\end{equation}

\end{proof}

Next we are going to show that $\bar{v}_\e$ is close to a linear profile away from $H$. To see this, recall that the slopes $\mu_1$,...,$\mu_m$ are repeated
$K$-times on the Neumann boundary of $\Sigma$. Hence the homogenization arguments for the homogenized operator $\bar{F}$ should apply in our setting, if $K$ is chosen sufficiently large. This is what we will show below in detail.

\vspace{10pt}
 
 Let $\mu(v_\e)$ and $\mu(\bar{v}_\e)$ be respectively the average slope of 
 $v_\e$ and $\bar{v}_\e$ respectively in $\Sigma$ as in \eqref{linear}.

\begin{lemma} \label{lem36}
Let $\alpha$ given as in Lemma~\ref{aa}. Then there exists a dimensional constant $C>0$ such that 
$$|\frac{\partial}{\partial\nu_1} \bar{v}_\e  - \mu(\bar{v}_\e)| \leq C\delta^{\alpha}\quad\hbox{ on }L=\{-2KmN\e \leq x\cdot\nu_1 \leq -KmN\e\} .$$

\end{lemma}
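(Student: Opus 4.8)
The plan is to exploit the periodic-in-$k$ structure of the Neumann data for $\bar v_\e$: the slopes $\mu_1,\dots,\mu_m$ repeat with period $mN\e$ along $\Gamma_N\cap\{x\cdot\nu_1=-N\e\}=H$, the domain $\Sigma$ has thickness $2KmN\e$ with $K=1/\delta$, so the Neumann data sees $2K$ full periods before reaching the Dirichlet face. Since $\bar F$ is the homogenized (hence $\nu_1$-independent modulo a rotation, by \eqref{new}) operator and $\bar v_\e$ solves $\bar F(D^2\bar v_\e)=0$ with boundary data oscillating at scale $mN\e$ over a region of thickness $\sim Km N\e$, this is exactly a homogenization-type configuration with small parameter $\sim 1/K=\delta$. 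First I would rescale, setting $y=x/(mN\e)$ and $\bar w(y)=(mN\e)^{-1}\bar v_\e(mN\e\, y)$, so that $\bar w$ solves $\bar F(D^2\bar w)=0$ in a strip of unit-order oscillation period and thickness $\sim 2K$, with Neumann data $\partial_{\nu_1}\bar w=\mu_{\bar k}$ that is $1$-periodic in the tangential variable (up to the $O(\delta)$ error from $\nu_1\ne e_2$, which is harmless as remarked after \eqref{neumann:proj}).

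The core step is an interior-to-boundary flatness estimate showing $\bar w$ is affine up to $O(\delta^\alpha)$ at distance $\gtrsim K$ from the oscillating face, i.e. on the rescaled image of $L$. I would obtain this in two sub-steps. First, a tangential flatness estimate: compare $\bar w(y)$ with its tangential shift $\bar w(y+\tau e_1)$ for integer $\tau$ (the period of the Neumann data), which satisfies the same equation and Neumann condition; since the two agree in distribution and the difference $w$ solves $-\mathcal P^+(D^2 w)\le 0\le -\mathcal P^-(D^2 w)$ with zero Neumann data on the oscillating face (modulo the $O(\delta)$ shift error) and $|w|\lesssim$ (thickness), Corollary~\ref{localization2} (or Lemma~\ref{localization}) applied at scale $K$ gives $|w|\lesssim \delta^\alpha K$ away from the boundary, i.e. $\bar w$ is tangentially $O(\delta^\alpha)$-flat on $L$'s rescaled image. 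Second, once $\bar w$ is tangentially flat, on each horizontal slice it is sandwiched between linear profiles in $\nu_1$ differing by $O(\delta^\alpha K)$, and since $\bar F$ of a linear function is zero, the comparison principle (Theorem~\ref{thm:comp}) pins $\partial_{\nu_1}\bar w$ to within $O(\delta^\alpha)$ of a single constant on the rescaled $L$. Rescaling back — noting the gradient is scale-invariant — yields $|\partial_{\nu_1}\bar v_\e-\mu(\bar v_\e)|\le C\delta^\alpha$ on $L$, with $\mu(\bar v_\e)$ the average slope from \eqref{linear}, after identifying the sandwiching constant with $\mu(\bar v_\e)$ via \eqref{C2}-type bookkeeping.

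The main obstacle I anticipate is bookkeeping the scales carefully so that the $O(\delta)$ errors coming from $\nu_1\ne e_2$ (which prevent $g_{\bar k}$ and the $\mu_k$-data from being exactly periodic) do not degrade the $\delta^\alpha$ rate: one must check that after rescaling by $mN\e$ these errors remain of order $\delta$ (or better) relative to the unit oscillation scale, using \eqref{steps}, \eqref{order202} and $N\e_0\sim\delta$. A secondary point is confirming that $\bar v_\e$ is bounded on the appropriate outer scale so that Corollary~\ref{localization2} applies with the growth hypothesis $|\bar v_\e|\le R^{2-\e}$; this follows from the maximum principle since the Neumann data $\mu_k$ is bounded by $\max|g|$, giving $|\bar v_\e-1|\lesssim KmN\e\lesssim 1$. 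With these checks in place the estimate follows as above; note I use Lemma~\ref{aa} only implicitly, through the uniform bound on the $\mu_k$.
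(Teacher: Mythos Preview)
Your approach is essentially the paper's. The one simplification you miss: since $\bar F$ is the homogenized operator and hence translation-invariant in $x$, and the Neumann data $\mu_{\bar k}$ is exactly $mN\e$-periodic along $H$, the tangential shift of $\bar v_\e$ by one period equals $\bar v_\e$ \emph{identically} --- there is no ``$O(\delta)$ shift error'' to manage and no need to invoke Corollary~\ref{localization2} on the difference (your apparent error enters only because you shift by $e_1$ rather than the true tangent $\nu_1^\perp$). From exact periodicity the paper simply picks, for any $y$ on the tangential line through $x_0\in L$, a point $z$ within one period of $y$ with $\bar v_\e(x_0)=\bar v_\e(z)$, and then interior Lipschitz regularity of $(KmN\e)^{-1}\bar v_\e(KmN\e\,\cdot)$ gives tangential oscillation $\le CmN\e$, i.e.\ $C/K=C\delta$ after rescaling; the conclusion then follows from the $C^{1,\alpha}$ regularity of the same rescaled function, which is exactly what your sandwiching-by-linear-profiles step accomplishes.
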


\begin{proof} 
Choose a point $x_0 \in L$.  Since $\bar{v}_\e$ has constant slope
$\mu_k$ on $H\cap I_k$ with $|H\cap I_k|=N\e$, and since the slopes
$\mu_1$,...,$\mu_m$ are repeated on $H$,  for any $y$ such that $(y-x_0)\cdot\nu_1=0$ there is a point $z \in L$ such that
\begin{equation}\label{r1}
 |y-z|\leq m N\e, \quad x_0-z \in H,
\end{equation}
and
\begin{equation}\label{r2}
\frac{\partial}{\partial\nu_1}\bar{v}_\e(x) =\frac{\partial}{\partial\nu_1}\bar{v}_\e(x-(x_0-z)) \hbox{ on } H.
\end{equation}

Due to \eqref{r2} and Theorem~\ref{thm:comp} we have $\bar{v}_\e(x)=\bar{v}_\e(x-(x_0-z))$. Moreover

\begin{equation} \label{Claim22}
|\bar{v}_\e(x_0)- \bar{v}_\e(y)|= |\bar{v}_\e(z)- \bar{v}_\e(y)|
\leq  CmN\e 
 \end{equation}

where the  inequality follows from the interior Lipschitz regularity of $\frac{1}{KmN\e}\bar{v}_\e(KmN\e x)$ (Theorem~\ref{lemma-interior}).  With (\ref{Claim22}), we can conclude due to the $C^{1,\alpha}$ regularity of $\frac{1}{KmN\e}\bar{v}_\e(KmN\e x)$.
 \end{proof}

Similarly, for the normal direction $\nu_2$ one can construct $\tilde{v}_{\e}$ and $\tilde{\bar{v}}_{\e}$ accordingly in 
$$
\tilde{\Sigma}=\{-2KmM\e \leq x\cdot \nu_2 \leq -M\e\}.
$$

Note that Corollary~\ref{coco} implies
\begin{equation} \label{37}
|\mu(w_{\e,k})-\mu(\tilde{w}_{\e,k})| =\delta^{\alpha}.
\end{equation}
Note that, due to \eqref{new} after rotation and reflection we may assume that the arrangement of $\mu(g_i)$ is the same for $\nu_1$ and $\nu_2$. Therefore from \eqref{37} and \eqref{new} it follows that 
\begin{equation}\label{conti:direction}
|\mu(v_\e)-\mu(\tilde{v}_{\e})| \leq \delta^{\alpha.}
\end{equation}

Due to Lemma~\ref{lem35} there exists $\alpha = \alpha(n,\lambda, \Lambda, \beta)$ such that 
\begin{equation}\label{38}
|\mu(\bar{v}_\e)-\mu(\tilde{\bar{v}}_{\e})| \leq \delta^{\alpha}.
\end{equation}

\subsection{The proof of the main theorem}

Recall that $u_\e^{\nu_i}$ for $i=1,2$ solve  $(P_\e^{\nu_i})$ in $\Pi_{\nu}(0)$.  With the estimates obtained in the previous section, we are now ready to construct our barrier for $u_\e^{\nu_1}$ and $u_\e^{\nu_2}$. As mentioned before we will construct barriers in three separate regions and patch them up. Let us construct it for the normal direction $\nu_1$: let $\alpha_0=\min (\alpha, \beta/4)$ where $\alpha$ is as given in \eqref{38}.

\medskip

{\bf In far-away region:}  let us define
$$
f_\e(x):= \Lambda(x\cdot\nu_1 +1)+1,\leqno(F)
$$
where
$$
\Lambda = \mu(\bar{v}^\e)+10\delta^{\alpha_0}.
$$
{\bf In the middle strip:} next consider $\rho_\e$: the unique (bounded) viscosity solution of
$$
\left\{\begin{array}{lll}
F(D^2\rho_\e, \frac{x}{\e})=0 &\hbox{ in } & \{-KmN\e\leq x\cdot\nu_1 \leq -2N\e\};\\ \\
\frac{\partial\rho_\e}{\partial\nu_1}= \Lambda_2 &\hbox{ on }& H=\{x\cdot\nu_1=-2N\e\};\\ \\
\rho_\e = f_\e &\hbox{ on } & \{x\cdot\nu_1=-KmN\e\}
\end{array}\right.\leqno(M)
$$
where $\Lambda_2(x)$ is obtained by approximating $\mu_k$ such that

\begin{equation}\label{condition_a}
\Lambda_2(x)\in C^1(\R^n)\hbox{ with its } C^1 \hbox{ norm less than }\delta^{\beta}(N\e)^{-1};
\end{equation}
and
\begin{equation}\label{condition_b} 
 \Lambda_2 \hbox{ is periodic with period } mN\e \hbox{ on } H, \hbox{  and  }\mu_k+2\delta^{\beta}\leq\Lambda_2(x) \leq \mu_k +5\delta^{\beta}.
\end{equation}
Such $\Lambda_2$ satisfying \eqref{condition_a}-\eqref{condition_b} exists  for $\alpha_0<\beta/4$ since, due to \eqref{conti:g}, for any given $k,l\in\ZZ$, 

\begin{equation}\label{data:flat}
 |\mu_{k+l}-\mu_{k}| \leq |l|\delta^{\beta}.
 \end{equation}
 
{\bf In the near-boundary region:} last, let $\phi_\e$ be the unique (bounded) viscosity solution of 
$$
\left\{\begin{array}{lll}
F(D^2\phi_\e, \frac{x}{\e})=0 &\hbox{ in } & \{-2N\e\leq x\cdot\nu_1 \leq -2N\e\};\\ \\
\frac{\partial \phi_\e}{\partial \nu_1} = g(\frac{x}{\e}) &\hbox{ on } & \{x\cdot\nu_1=0\}\\ \\
\phi_\e = \rho_\e &\hbox{ on }& \{x\cdot\nu_1=-2N\e\};\\
 \end{array}\right. \leqno (N)
$$

\medskip

Our goal is  to show that the function
$$
U_\e:= \left\{\begin{array}{ll} 
                    f_\e &\hbox{ in } \{-1 \leq x\cdot\nu_1\leq -KmN\e\}\\ \\ 
                    \rho_\e &\hbox{ in } \{-KmN\e \leq x\cdot\nu_1\leq -2N\e\}\\ \\
                     \phi_\e &\hbox{ in } \{-2N\e \leq x\cdot\nu_1 \leq 0\} 
                     \end{array}\right. 
$$
is a supersolution of $(P_\e)$ in $\Pi_{\nu_1}$. 

\medskip

Observe that, due to Lemma ~\ref{lem35} and Lemma~\ref{lem36},

\begin{equation}\label{patching1}
\frac{\partial\rho_\e}{\partial \nu_1} \leq \Lambda \hbox{ on } \{x\cdot\nu_1=-KmN\e\}.
\end{equation}
This means the patch-up of $f_\e$ and $\rho_\e$ together is a supersolution, i.e., 
$$
F(D^2U_\e, \frac{x}{\e}) \geq 0 \hbox{ in } \{-1\leq x\cdot\nu_1 \leq -KmN\e\}.
$$

It remains to see if the same holds for the patch-up of $\rho_\e$ and $\phi_\e$. 

\medskip

Note that $\rho_\e$ is a constant on its Dirichlet boundary $\{x\cdot\nu_1=-KmN\e\}$.  Due to  this fact and the small oscillation of $\Lambda_2$ given in \eqref{condition_a}, as well as Theorem~\ref{thm:reg2} and Theorem~\ref{ext} applied to re-scaled versions of $\rho_\e$  yields that  $\frac{1}{N\e}\rho_\e(N\e x)$ has its $C^{1,\beta}$ norm of size $\delta^{\beta/2}$ in the scale of $N\e$ . More precisely, for any $y\in H:=\{x\cdot\nu_1=-2N\e\}$ we have

\begin{equation}\label{sol:flat} 
 |\rho_\e(x)-\rho_\e(y) - \eta_0\cdot(x-x_0) | \leq \delta^{\alpha_0}N\e  \hbox{ on } H \cap B_{2\delta^{-\alpha_0}N\e}(y),
 \end{equation}
 where $\eta_0 = D\rho_\e(x_0)-D\rho_\e(x_0)\cdot\nu_1.$

\medskip

Let us fix a point $x_0\in H$. Next we will invoke Lemma~\ref{localization} for $\frac{1}{N\e}\phi_\e(N\e x)$ with $A=\delta^{\alpha_0}$ and $R=2\delta^{-\alpha_0}$ to obtain 
\begin{equation}\label{kookoo}
|\phi_\e - \eta_0\cdot(x-x_0)-\bar{\phi_\e}| \leq \delta^{\alpha_0}N\e \hbox{ in }\{-2N\e\leq x\cdot\nu_1 \leq 0\}\cap B_{\delta^{-\alpha_0}N\e}(0),
\end{equation}
where $\bar{\phi_\e}$ solves $(N)$ with the fixed boundary data $\rho_\e$ replaced by the constant $\rho_\e(x_0)$.

\medskip

 Suppose $x_0\in H\cap I_k$. Due to Corollary~\ref{coco} we have 

\begin{equation}\label{kookoo22}
|\mu(\bar{\phi}_\e)- \mu_k|< \delta^{\alpha_0} N\e\hbox{ on } H:=\{x\cdot\nu_1=-N\e\}.
\end{equation}

\eqref{kookoo} as well as \eqref{kookoo22} yields that

 \begin{equation}\label{kookoo3}
|\phi_\e(x)- (\eta_0\cdot(x-x_0)+\mu_k(x-x_0)\cdot\nu_1 -\bar{\phi}(x_0)|< 2\delta^{\alpha_0}N\e \hbox{ on } \{-2N\e \leq x\cdot\nu_1\leq-N\e\}.
\end{equation}

It follows from \eqref{sol:flat},\eqref{kookoo3} and the $C^{1,\alpha}$ regularity of $\rho_\e$ and $\phi_\e$ in $N\e$-scale  that
$$
\frac{\partial}{\partial\nu_1} \phi_\e \leq \frac{\partial}{\partial\nu_1} \rho_\e \hbox{ on } H,
$$

and thus the patch-up of $\rho_\e$ and $\phi_\e$ is a supersolution, i.e.
$$
F(DU^\e, \frac{x}{\e}) \geq 0 \hbox{ in } \{ -KmN\e \leq x\cdot\nu_1\leq -N\e\}.
$$
\medskip

Summarizing, we have shown that $U_\e$ is a supersolution of $(P^{\nu_1}_\e)$ in $\Pi_{\nu_1}(0)$.  Hence by comparison principle we obtain that  $u_\e^{\nu_1} \leq U_\e$ in $\Pi_{\nu_1}(0)$.  In particular,
$$
  u^{\nu_1}_\e(x) \leq U_\e(x) \leq (\mu(v^\e)+10\delta^{\alpha_0})(x\cdot\nu_1-1)+1.
$$

Similarly one can construct a subsolution $V_\e$, to show that 
\begin{equation}
|u^{\nu_1}_\e (x)- \mu(v^\e)(x\cdot\nu_1-1)-1| \leq 10\delta^{\alpha_0}.
\end{equation}

Above equation yields that  
$$
|\mu(u^{\nu_1}_\e) - \mu(\bar{v}^\e)| \leq 10\delta^{\alpha_0}.
$$ Similarly, 
$$
|\mu(u^{\nu_2}_\e)-\mu(\bar{\tilde{v}}^\e) | \leq 10\delta^{\alpha_0}.
$$
Moreover by \eqref{38} we have 
$$
|\mu(\bar{v}^\e) - \mu(\bar{\tilde{v}}^\e)| \leq \delta^\alpha_1.
$$
This proves Theorem~\ref{continuity} (a).

\medskip

Lastly observe that $(\bar{P}_\e^{\nu_1})$  given in section 4.2.2,  has its only microscopic term in the Neumann boundary data, which are constants in each segments $I_k$ of $H$ of size 
$N\e \leq \delta \e^{1/20}
$ due to \eqref{order202}, with the constants repeating itself after $\frac{1}{\delta}$ pieces of $I_k$: that is the Neumann boundary data is periodic with period $\e^{1/20}$. Since $\bar{v}_\e$ solves $(\bar{P}_\e^{\nu_1})$,  $\mu(\bar{v}_\e)$ converges uniformly to a limit $\mu_0$ as $\e\to 0$ with the convergence rate only depending on $\delta$ and $\e$, as long as \eqref{order202} is satisfied. Therefore, if necessary approximating the Neumann data with functions of the form $f(\frac{x}{\e^{1/20}})$ where $f$ is periodic and Lipschitz,  we conclude that
\begin{equation}\label{conclusion}
|\mu(u^{\nu_i}_\e) - \mu_0 | \leq \delta^{\alpha_0}\hbox{ for } i=1,2,
\end{equation}
if $\e$ is sufficiently small.  Since $\delta>0$ is arbitrarily chosen and $\alpha_0$ only depends on $\beta,n,\lambda$ and $\Lambda$, this proves the second claim of Theorem~\ref{continuity} (b).

\hfill$\Box$

\begin{remark} [For dimensions higher than two]
In higher dimensions $n>2$ and for $\nu=e_n$,
one can define
$$
g_i(x_1,...,x_{n-1},x_n) = g_i(x_1,...,x_{n-1}) = g(x_1,...,x_{n-1}, \delta(i-1))
$$
for $i=0,1,...,m=[\delta^{-1}]$. Let us also define
$$
I_{k_1,k_2,...,k_{n-1}}= [(k_1-1)N\e, k_1N\e] \times...\times [(k_{n-1}-1)N\e,k_{n-1}N\e]\times \R.
$$
Then parallel arguments in the previous two subsections apply to yield the corresponding result to Theorem~\ref{continuity}.
\end{remark}

\section{In general domain}
In this section we will use Theorem~\ref{continuity} as well as stability properties of viscosity solutions to prove our main result.  As given in the introduction, let $\Omega$ be a bounded domain with $C^2$ boundary containing a unit ball $K=B_1(0)$. Let us denote $\nu=\nu_{x}$ the outward normal vector of $\Omega$ at $x\in\partial\Omega$.
Suppose that $\partial\Omega$ does not have any flat boundary parts in the following sense:
For any $x_0\in\partial\Omega$ and sufficiently small $\sigma$, there exists $r_0>0$ and $r(\sigma)>0$ such that 
\begin{equation}\label{condition22}
|\nu_x-\nu_{x_0}| \geq \sigma \hbox{ if } x\in \partial\Omega\cap\{ r(\sigma)< |x-x_0| <r_0\}.\end{equation}

Note that, for example, any strictly convex domain $\Omega$ satisfies (a)-(b). We now state the main theorem.

\begin{theorem}\label{main2}
Let $\Omega$ and $K$ as given above, and let
$\bar{\mu}:\mathcal{S}^n\to [1,2]$ be as given in Theorem~\ref{continuity}.
Consider $u_\e$ solving
$$
\left\{\begin{array}{lll}
F(D^2u_\e,\frac{x}{\e})=0 &\hbox{ in } &\Omega-K;\\ \\
\frac{\partial u_\e}{\partial \nu}=g(\frac{x}{\e}) &\hbox{ on } &\partial\Omega;\\ \\
u=1 &\hbox{ on } & K,
\end{array}\right.
$$
and let $\bar{F}$ be the homogenized operator given in Theorem~\ref{original_convergence} with the assumption \eqref{new}. Then $u^\e$ converges uniformly to the unique viscosity solution $u$ of
$$
\left\{\begin{array}{lll}
\bar{F}(D^2u)=0 &\hbox{ in } &\Omega-K;\\ \\
\frac{\partial u}{\partial \nu}=\bar{\mu}(\nu) &\hbox{ on } &\partial\Omega;\\ \\
u=1 &\hbox{ on } & K.
\end{array}\right.\leqno (P)
$$
\end{theorem}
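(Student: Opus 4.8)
\emph{Proof strategy.} The argument has the standard three-part structure for periodic homogenization with oscillatory Neumann data: compactness and a reduction to identifying subsequential limits; passage to the homogenized equation in the interior together with the Dirichlet condition on $\partial K$; and verification of the homogenized Neumann condition $\partial_\nu u=\bar\mu(\nu)$ on $\partial\Omega$, which is the heart of the matter. The only genuinely new inputs are Theorem~\ref{continuity} --- the continuity of $\bar\mu$ together with the quantitative mode of convergence for strip solutions --- and the non-flatness condition \eqref{condition22}; everything else is the stability of viscosity solutions, the comparison principle (Theorem~\ref{thm:comp}), and the operator-homogenization result (Theorem~\ref{original_convergence} and Theorem~\ref{ext}). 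As a first step I would establish compactness: a uniform $L^\infty$-bound on $\{u_\e\}$ follows by comparison (Theorem~\ref{thm:comp}) against linear and radial barriers, using $1\le g\le 2$ and the localization estimate Lemma~\ref{localization}, while uniform $C^\alpha(\overline{\Omega-K})$-bounds follow from the boundary H\"older estimate Theorem~\ref{thm:reg2}(a) applied in small half-balls along $\partial\Omega$ together with the interior estimate Theorem~\ref{lemma-reg}. Hence along a subsequence $u_{\e_j}\to u$ locally uniformly on $\overline{\Omega-K}$. Since $\bar F$ satisfies (F1)--(F3) and, by Theorem~\ref{continuity}, $\nu\mapsto\bar\mu(\nu)$ is continuous, the homogenized problem $(P)$ has a unique viscosity solution by Theorem~\ref{thm:comp}; so it suffices to show that \emph{every} subsequential limit $u$ solves $(P)$.

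Next I would pass to the homogenized equation in the interior and handle the Dirichlet condition. Fix any ball $B\Subset\Omega-K$: on $B$ the functions $u_\e$ solve $F(D^2u_\e,x/\e)=0$ with Dirichlet data $u_\e|_{\partial B}\to u|_{\partial B}$ uniformly, so Theorem~\ref{original_convergence} and the stability of viscosity solutions give $\bar F(D^2u)=0$ in $B$; as $B$ is arbitrary, $\bar F(D^2u)=0$ in $\Omega-K$. For the Dirichlet condition on $\partial K$, which is smooth and carries the non-oscillatory data $u_\e=1$, radial barriers centered at the origin show $u_\e\to 1$ uniformly near $\partial K$, so $u=1$ on $K$.

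It remains to verify $\partial_\nu u=\bar\mu(\nu)$ on $\partial\Omega$ in the viscosity sense; I describe the subsolution case, the supersolution case being symmetric. Suppose $\varphi\in C^2(\overline\Omega)$ touches $u$ strictly from above at $x_0\in\partial\Omega$ with $\bar F(D^2\varphi)>0$ near $x_0$ and $\partial_\nu\varphi(x_0)>\bar\mu(\nu_{x_0})+2\theta$ for some $\theta>0$; I seek a contradiction. Choose an \emph{irrational} direction $\nu^*$ with $|\nu^*-\nu_{x_0}|$ so small that $|\bar\mu(\nu^*)-\bar\mu(\nu_{x_0})|<\theta$ (Theorem~\ref{continuity}(a)), and a scale $r>0$ at which, by $C^2$ regularity, $\partial\Omega\cap B_r(x_0)$ lies in the slab $\{|(x-x_0)\cdot\nu^*|\le Cr^2\}$, while the non-flatness condition \eqref{condition22} forces $\Omega\cap B_r(x_0)$ to stay on one side of a cone about $-\nu_{x_0}$ --- in particular $\partial\Omega$ does not coincide near $x_0$ with a flat hyperplane of rational normal. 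Then I would build a supersolution $\Psi_\e$ of $F(D^2\Psi_\e,x/\e)=0$ in $\Omega\cap B_r(x_0)$ with $\partial_{\nu}\Psi_\e\ge g(x/\e)$ on $\partial\Omega\cap B_r(x_0)$ by patching three layers, exactly as in the proof of Theorem~\ref{continuity}: near $\partial\Omega$, a translate of the strip solution $u^{\nu^*}_\e$ of $(P^{\nu^*}_\e)$, which by Theorem~\ref{thm:planar}(iii) and Theorem~\ref{continuity}(b) lies within the error $\mathcal E(\e)+C\theta$ of a linear profile of slope $\bar\mu(\nu_{x_0})+\theta$ (the quantitative estimate is exactly what lets me match the microscopic scale $\e$ to the curvature scale $r^2$); a middle layer homogenizing the $x/\e$-dependence of $F$ via Theorem~\ref{ext}; and, away from $\partial\Omega$, the function $\varphi$ itself, which is an $\bar F$-supersolution whose normal slope is strictly steeper, so the three pieces glue into a viscosity supersolution. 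Since $\varphi>u$ on $\partial(\Omega\cap B_r(x_0))\setminus\partial\Omega$, uniform convergence gives $\Psi_\e\ge u_\e$ there for $\e$ small, whence comparison (Theorem~\ref{thm:comp}) yields $u_\e\le\Psi_\e$ in $\Omega\cap B_r(x_0)$; letting $\e\to0$ along the subsequence and then $r,\theta\to0$ contradicts the strict touching of $u$ by $\varphi$ at $x_0$. Combining the three steps, every subsequential limit of $u_\e$ solves $(P)$, and uniqueness for $(P)$ upgrades this to uniform convergence of the whole family, which is Theorem~\ref{main2}.

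I expect the Neumann step to be the main obstacle --- specifically, producing a boundary barrier with \emph{exactly} the slope $\bar\mu(\nu_{x_0})$ at a boundary point whose normal $\nu_{x_0}$ may be rational, since then the strip problem $(P_\e^{\nu_{x_0}})$ with a nonzero offset has no limit. The non-flatness hypothesis \eqref{condition22} is precisely what circumvents this: it guarantees that an arbitrarily small perturbation of $\nu_{x_0}$ to a nearby irrational $\nu^*$ is compatible with the local geometry of $\partial\Omega$, so that Theorem~\ref{thm:planar} and the quantitative continuity estimate of Theorem~\ref{continuity} can be brought to bear; without it the limit genuinely fails to exist.
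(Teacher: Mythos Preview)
Your overall architecture---compactness, interior homogenization, Dirichlet condition, then the Neumann condition by barriers---matches the paper, which also passes through the half-relaxed limits and Proposition~\ref{prop:main}. The Neumann step, however, has a genuine gap, and your reading of the non-flatness hypothesis is not quite the right one.

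The difficulty is the barrier $\Psi_\e$. You fix an irrational $\nu^*$ and a radius $r>0$ \emph{before} sending $\e\to 0$, and near $\partial\Omega$ you take ``a translate of the strip solution $u_\e^{\nu^*}$''. But that function satisfies $\partial_{\nu^*}u_\e^{\nu^*}=g(x/\e)$ on a flat hyperplane, whereas you need $\partial_\nu\Psi_\e\ge g(x/\e)$ on the curved piece $\partial\Omega\cap B_r(x_0)$. By the $C^{1,\alpha}$ estimate (Theorem~\ref{thm:reg2}(b)) applied to the unit-scale rescaling $\tilde w(x)=\e^{-1}w_\e(\e x)$, the error incurred when transferring the Neumann condition across the gap between the hyperplane and $\partial\Omega$ is of order $(d/\e)^\alpha$, where $d\sim r^2$ is that gap. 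With $r$ fixed and $\e\to 0$ this blows up, so the patched function is not a supersolution on $\partial\Omega$. The paper avoids this by working at an $\e$-dependent scale: Lemma~\ref{close} compares the curved-domain solution with a strip solution in a box of tangential size $\e^{5k/8}$ and normal size $\e^k$ (with $k$ close to $1$), where the curvature gap is $O(\e^{5k/4})$ and the transfer error is $O(\e^{(5k/4-1)\alpha})\to 0$. The contradiction is then obtained via the perturbed test function method at an $\e$-dependent touching point $x_\e$, not by a single barrier on a fixed $B_r(x_0)$.

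Relatedly, the non-flatness condition \eqref{condition22} is not used merely to ``choose an irrational $\nu^*$ compatible with the geometry''---irrational directions are dense regardless. Its role is quantitative: the paper first perturbs the test function by a bump $g_\gamma$ (plus a corrector $v_\gamma$) so that the touching point $x_\e$ of $\phi^\e-u_\e$ is forced to lie \emph{outside} $B_{\e^{\alpha_0}}(x_0)$. Then \eqref{condition22} gives $|\nu_{x_\e}-\nu_{x_0}|^{21/20}>\e^{1-k}$, which is exactly the smallness condition $\e\le\delta|\nu-\nu_{x_0}|^{21/20}$ in Theorem~\ref{continuity}(b) needed to control the slope of the strip solution at $p_\e$ near $x_\e$. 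Without pushing $x_\e$ away from $x_0$, the normal at $x_\e$ could be too close to a rational $\nu_{x_0}$ relative to $\e$, and neither Theorem~\ref{thm:planar}(iii) nor Theorem~\ref{continuity}(b) would give a usable rate. Your sketch does not contain this perturbation step, and your parenthetical ``the quantitative estimate is exactly what lets me match the microscopic scale $\e$ to the curvature scale $r^2$'' does not substitute for it.
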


\begin{remark}
Note that the uniqueness of $u$ follows from the continuity of $\bar{\mu}(\nu)$ and  Theorem~\ref{thm:comp}.
\end{remark}

Next we locally approximate $u_\e$ with the solutions associated with strip domains discussed in section 4, based on the regularity properties of $u_\e$ and $\partial\Omega$.
 
 \medskip
 
For given domain $\Omega$ , $p\in\partial\Omega$ and $0<k<1$, let us define $\nu_0=\nu_p$ and 
$$
\Sigma_k:=\Omega\cap\{x: -\e^k < (x-p)\cdot\nu_0\} \cap B_{\e^{5k/8}}(p).
$$

Let $w_\e$ solve
$$
\left\{\begin{array}{lll}
F(D^2w_\e, \frac{x}{e})=0 &\hbox{ in } & \Sigma_k\\ \\
\displaystyle{\frac{\partial w_\e}{\partial\nu}=g(\frac{x}{\e})} &\hbox{ on } &\partial\Omega\cap\partial {\Sigma}_k\\ \\
w_\e=1 &\hbox{ on }& \partial {\Sigma}_k - \partial\Omega.
\end{array}\right.
$$
Next let 
$$
\widetilde{\Sigma}_k:= \{ -\e^k < (x-p)\cdot\nu_0 <0\}\cap B_{\e^{5k/8}}(p),
$$ and let $v_\e$ solve
$$
\left\{\begin{array}{lll}
F(D^2 v_\e, \frac{x}{\e})=0 &\hbox{ in }& \tilde{\Sigma}_k;\\ \\
\displaystyle{\frac{\partial v_\e}{\partial\nu_0} =g(\frac{x}{\e})} &\hbox{ on }& \{(x-p)\cdot\nu_0=0\};\\ \\
v_\e=1 &\hbox{ on }&  \{-\e^k = \nu\cdot(x-p)\}.
\end{array}\right.
$$
Since $\partial\Omega$ is $C^2$, we may assume that the hyperplane $\{(x-p)\cdot\nu_0=0\}$  is contained in
the $\e^{5k/4}$-neighborhood of $\partial \Omega$ in
$B_{\e^{5k/8}}(p)$.

\begin{lemma}\label{close}
There exists $0<k<1$ and $a>0$: independent of $\e$,$p$ and $\nu_0$ such that 
$$
|w_\e - v_\e| \leq \e^{k+a} 
$$
in $\Sigma_k \cap B_{\e^{2k/3}}(p)$.
\end{lemma}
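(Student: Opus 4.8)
The plan is to compare $w_\e$ and $v_\e$ by the comparison principle (Theorem~\ref{thm:comp}), after accounting for the three sources of discrepancy between the two problems: (i) the Dirichlet boundaries $\partial\Sigma_k-\partial\Omega$ and $\{(x-p)\cdot\nu_0=-\e^k\}$ sit in slightly different places, (ii) the curved Neumann boundary $\partial\Omega\cap\partial\Sigma_k$ differs from the flat hyperplane $\{(x-p)\cdot\nu_0=0\}$, both in position (by $O(\e^{5k/4})$ inside $B_{\e^{5k/8}}(p)$, since $\partial\Omega$ is $C^2$) and in the direction of its normal $\nu_x$ (by $O(\e^{5k/8})$, again by $C^2$), and (iii) the ``side" boundary $\partial B_{\e^{5k/8}}(p)$, which is handled by localization since we only claim the estimate on the smaller ball $B_{\e^{2k/3}}(p)$. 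First I would use the $C^{1,\alpha}$ regularity of $w_\e$ and $v_\e$ up to the Neumann boundary (Theorem~\ref{thm:reg2}(b), after rescaling by $\e$, using $g\in C^\beta$) to estimate how much the Neumann condition degrades when we replace $\nu_x$ by $\nu_0$ and transfer data from $\partial\Omega$ to the hyperplane: the normal-direction error contributes $O(\e^{5k/8})\cdot\|Dw_\e\|$ and the position error contributes $O(\e^{5k/4})\cdot\|D^2 w_\e\|$ on the relevant scale.

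The key steps, in order, are: (1) rescale $x\mapsto x/\e$ so that the oscillation in $F$ and in $g$ become order-one periodic; the domain becomes a region of size $\e^{5k/8-1}$, and we need $k<8/5$... more precisely $5k/8<1$, i.e. $k<8/5$, which is automatic, but we also need the rescaled radius large, so $k<1$; (2) bound $|w_\e-v_\e|$ and the defect in the Neumann data on the common part of the boundary using Theorem~\ref{thm:reg2} and the $C^2$ bound on $\partial\Omega$ — here one gets a defect of order $\e^{(5k/8)\alpha}$ in the slope near the Neumann boundary after rescaling, i.e. $\e^{k+(5k/8)\alpha}$ in original scale after integrating across the strip of width $\e^k$; (3) build linear barriers $\pm C\e^{\sigma}((x-p)\cdot\nu_0+\e^k)$ (which solve $F(D^2\cdot,x/\e)=0$ since $F(0,\cdot)=0$) plus a quadratic localization correction of the form in Lemma~\ref{localization} to absorb the values on $\partial B_{\e^{5k/8}}(p)\setminus$(Neumann part), using that $|w_\e|,|v_\e|\le 2$; (4) apply Lemma~\ref{localization} (rescaled, with $R\sim\e^{5k/8-2k/3}=\e^{-k/24}$ as the ratio of the two balls) to conclude that on $B_{\e^{2k/3}}(p)$ the contribution of the side boundary is $O(R^{-\e'})$ times the scale, hence negligible compared with $\e^{k+a}$; (5) collect the exponents and choose $k$ small and $a>0$ so that $k+a < \min(k+(5k/8)\alpha,\ \text{localization gain})$, e.g. $a = (5k/8)\alpha/2$, which is independent of $\e$, $p$, $\nu_0$ as required (the constants in Theorems~\ref{thm:reg2} and~\ref{localization} depend only on $n,\lambda,\Lambda,\beta$, and the $C^2$ bound on $\partial\Omega$ is uniform over $p$).

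The main obstacle I expect is step (2): carefully quantifying the defect in the Neumann condition when transferring the oscillatory data $g(x/\e)$ from the curved surface $\partial\Omega$ to the flat hyperplane while simultaneously changing the normal from $\nu_x$ to $\nu_0$. One must be careful that the $C^{1,\alpha}$ estimate is applied at the right scale — the natural scale is $\e$ (the period), not $\e^{5k/8}$ — so the gradient bound $\|Dw_\e\|\lesssim 1$ holds only after rescaling by $\e$, and the position error $\e^{5k/4}$ must be measured against $\e$, giving a relative error $\e^{5k/4-1}$, which forces a constraint like $5k/4>1$, i.e. $k>4/5$. Reconciling this lower bound on $k$ with the upper bounds from steps (1) and (4) (which want $k$ not too close to $1$) is the delicate bookkeeping; one resolves it by taking $k$ in the admissible window (something like $4/5<k<1$, with the $5k/8$ exponent in the radius chosen precisely to make this window nonempty) and tracking that $a$ can be taken positive throughout. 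A secondary technical point is that $w_\e$ and $v_\e$ are only continuous a priori, so the barrier comparisons must be phrased in the viscosity sense via Theorem~\ref{general:cp} / Theorem~\ref{thm:comp}, using \eqref{operator} to reduce the difference to a Pucci-extremal inequality when needed.
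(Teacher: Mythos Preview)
Your approach is essentially the paper's: rescale by $\e$, use $C^{1,\alpha}$ regularity (Theorem~\ref{thm:reg2}(b)) to compare $\partial_{\nu_0}w_\e$ and $\partial_{\nu_0}v_\e$ on a common hyperplane $H$ lying just below both Neumann surfaces, then run a comparison argument with a linear term (absorbing the slope defect) plus a quadratic barrier (absorbing the lateral boundary $\partial B_{\e^{5k/8}}$), and finally read off the estimate on the smaller ball $B_{\e^{2k/3}}(p)$.

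The genuine gap is your slope-defect exponent. After the rescaling $\tilde w(x)=\e^{-1}w_\e(\e x)$, the function $\tilde w$ oscillates by $O(\e^{k-1})$ (since $w_\e$ oscillates by $O(\e^k)$ across the strip, by comparison with linear profiles using $1\le g\le 2$), so Theorem~\ref{thm:reg2}(b) only yields $\|\tilde w\|_{C^{1,\alpha}}\le C\e^{k-1}$; the claim that ``$\|Dw_\e\|\lesssim 1$ after rescaling by $\e$'' is not what the cited regularity delivers. With the two Neumann surfaces at rescaled distance $\e^{5k/4-1}$, the defect on $H=\{(x-p)\cdot\nu_0=-\e^{5k/4}\}$ is therefore
\[
|\partial_{\nu_0}w_\e-\partial_{\nu_0}v_\e|\ \le\ C\,\e^{\,k-1+(5k/4-1)\alpha},
\]
not $\e^{(5k/8)\alpha}$. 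For this exponent to be positive (the paper asks for it to exceed $\alpha/6$) one must take $k$ \emph{sufficiently close to $1$} depending on $\alpha$, which is strictly stronger than your $k>4/5$ and flatly contradicts the ``choose $k$ small'' in your step~(5). With the correct exponent in hand, the paper uses the explicit barrier $h(x)=\e^{-k/4}|x-p|^2+n\tfrac{\Lambda}{\lambda}\e^{-k/4}\bigl(\e^{2k}-((x-p)\cdot\nu_0)^2\bigr)$ to handle the lateral face (it is $\ge\e^k$ there, has $\partial_{\nu_0}h=O(\e^k)$ on $H$, and is $O(\e^{k+k/12})$ on $B_{\e^{2k/3}}$), giving $a=\min\{\alpha/6,\,k/12\}$. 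Your direct appeal to Lemma~\ref{localization} with $R=\e^{-k/24}$ is the same mechanism in spirit, but that lemma is normalized to unit strip width relative to a unit inner ball, so it does not apply as stated; writing down $h$ directly avoids the mismatch.
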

\begin{proof}

After a translation, we may assume that $p=0$. First note that $w_\e$ and $v_\e$ will
oscillate at most of order $\e^{k}$ in their respective domains
$\Sigma_k$ and $\tilde{\Sigma}_k$: This can be checked by comparison with linear
profiles. Let us consider the re-scaled functions
$$
\tilde{w}(x)=  w_\e(\e x)/\e \,\,\hbox{ and }\,\,\tilde{v}(x)=
v_\e(\e x)/\e.
$$
Then Theorem~\ref{thm:reg2}, the H\"{o}lder continuity of $g$, and the fact that $\tilde{w}$ and $\tilde{v}$ oscillates up to $C\e^{k-1}$ yields 
$$
\|\tilde{w}\|_{C^{1,\alpha}}, \|\tilde{v}\|_{C^{1,\alpha}} \leq C\e^{k-1}
$$
 in their respective domains  $\frac{1}{\e}\Sigma_k$ and $\frac{1}{\e}\tilde{\Sigma}_k$, where $0<\alpha<1$ is as given in Theorem~\ref{thm:reg2}.
Consequently we have
 \begin{equation}\label{equation111}
 |\partial_{\nu_0}w_\e-\partial_{\nu_0}v_{\e}|\leq  O(\e^{k-1+(\frac{5k}{4}-1)\alpha})\hbox{ on } H:=\{x\cdot\nu_0 = -\e^{5k/4} \}\cap B_{\e^{5k/8}}.
 \end{equation}
   Let us choose $k$ sufficiently close to $1$ so that 
   $$
   k-1+(\frac{5k}{4}-1)\alpha > \alpha/6.
   $$
Let us define 
$$
h(x) :=  \e^{-k/4}|x-p|^2
+n\frac{\Lambda}{\lambda}\e^{-k/4}(\e^{2k}-|(x-p)\cdot \nu_0|^2),
$$
so that 
$$
-\mathcal{P}^-(D^2h) >0, \quad h\geq \e^k\hbox{ on } \overline{\Sigma_k}\cap \partial B_{5k/8}, \quad h \geq 0\hbox{ on } \{(x-p)\cdot\nu_0=-\e^k\}
$$
and $\partial_{\nu_0} h  \sim O(\e^k) <<\e^{\alpha/6} $ on $H$,
where $H$ is given in \eqref{equation111}.
 Now by \eqref{equation111},  the comparison principle (Theorem~\ref{thm:comp}) applies to $\omega_\e-v_\e$ and $h$ in the domain $\Sigma_k\cap \{x\cdot\nu_0 \leq -\e^{5k/4}\}$  to yield

\begin{equation}\label{upperb}
|w_\e(x) - v_\e(x)| \leq C\e^{\alpha/6}((x-p)\cdot\nu_0+\e^{k}) + h (x)
\hbox{ in } \Sigma_k.
\end{equation}
By evaluating the upper
bound obtained in \eqref{upperb} in the region $\Sigma_k\cap B_{\e^{2k/3}}$, we conclude the lemma for
$a = \min\{ \alpha/6, k/12\}$.

\medskip

\end{proof}

We are now ready to  show the main proposition. Let us define the
semi-continuous limits 
$$
{\rm lim\,sup}^*  u^\e (x) := \liminf_{\e\to 0}( \sup\{u^\e(y):
y\in\bar{\Omega} \hbox{ and } |x-y|\leq \e\})
$$
and
$$
{\rm lim\,inf}_* u^\e(x):= \limsup_{\e\to 0} (\inf\{u^\e(y):
y\in\bar{\Omega}  \hbox{ and } |x-y| \leq \e\}).
$$

\medskip

It is straightforward from the definition to check that ${\rm lim\,sup}^*  u^\e$ is upper semicontinuous and ${\rm lim\,inf}_*u^\e$ is lower semicontinuous.

\medskip

Our main theorem is a consequence of the following proposition.

\begin{proposition}\label{prop:main}
The following holds true:
\begin{itemize}
\item[(a)] $\bar{u}:=\limsup^* u^\e$ is the viscosity subsolution of $(P)$;
\item[(b)] $\underline{u}:= \liminf_* u^\e$ is the viscosity supersolution of  $(P)$.
\end{itemize}
\end{proposition}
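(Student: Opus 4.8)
The plan is to prove (a); assertion (b) follows by the symmetric argument applied to $\liminf_* u^\e$. Since $\bar u = \limsup^* u^\e$ is upper semicontinuous by construction, and since the interior and Dirichlet conditions in $(P)$ pass to the limit by the standard stability of viscosity solutions under the homogenization result Theorem~\ref{original_convergence} (applied in interior subdomains where only the oscillation in the operator is present), the only thing requiring work is the oscillatory Neumann condition $\partial_\nu u \le \bar\mu(\nu)$ on $\partial\Omega$ in the viscosity sense. So I would argue by contradiction: suppose $\phi\in C^2(\Omega)\cap C^1(\bar\Omega)$ touches $\bar u$ from above at some $x_0\in\partial\Omega$, with $F(D^2\phi,\cdot)$ (now $\bar F(D^2\phi)$) strictly positive near $x_0$ and $\nu_{x_0}\cdot D\phi(x_0) > \bar\mu(\nu_{x_0})$, and derive a contradiction with $\bar u$ being a limit of subsolutions.

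The core mechanism is the local approximation in Lemma~\ref{close}: near $x_0$ and at scale $\e^{5k/8}$, $u^\e$ in the curved collar $\Sigma_k$ is $\e^{k+a}$-close to $v_\e$, the solution of the flat strip problem $(P_\e^{\nu_0})$ with $\nu_0 = \nu_{x_0}$. Since $\partial\Omega$ has no flat parts, condition \eqref{condition22} guarantees that the normal directions appearing on $\partial\Omega\cap B_{r_0}(x_0)$ fill out a set whose directions are predominantly irrational (rational directions being a measure-zero, nowhere-dense set on $\mathcal S^{n-1}$), so one may assume $\nu_0$ is irrational — or approximate it by irrational directions using the continuity of $\bar\mu$ established in Theorem~\ref{continuity}. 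Then Theorem~\ref{thm:planar}(iii) together with Theorem~\ref{continuity}(b) gives that, once $\e$ is small enough relative to $|\nu-\nu_0|$-type quantities, the average slope $\mu(u_\e^{\nu_0})$ of $v_\e$ is within $\delta$ of $\bar\mu(\nu_0)$. Hence in $\Sigma_k\cap B_{\e^{2k/3}}(x_0)$ we have $u^\e(x) \le \bar\mu(\nu_0)\big((x-x_0)\cdot\nu_0\big) + u^\e|_{\text{on the inner face}} + o(\e^{2k/3})$, i.e. $u^\e$ cannot grow toward $\partial\Omega$ with slope exceeding $\bar\mu(\nu_0) + \delta$. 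Comparing this with the test function $\phi$, which by hypothesis forces $u^\e$ (through $\bar u$) to grow with slope $\bar\mu(\nu_0) + c$ for some fixed $c>0$, and choosing $\delta < c/2$, produces a point where $u^\e > \phi + (\text{something positive})$, contradicting $\bar u\le\phi$ with equality at $x_0$. The bookkeeping here is the matching of scales: the collar depth $\e^k$, the tangential window $\e^{5k/8}$, the comparison ball $\e^{2k/3}$, and the requirement \eqref{order202}-type smallness $\e\lesssim \delta\,|\nu-\nu_0|^{21/20}$ must all be simultaneously arranged, but this is exactly what the exponents in Lemma~\ref{close} were tuned for.

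The main obstacle, and the step I would spend the most care on, is the passage from the \emph{flat-strip} estimate to a genuine \emph{pointwise barrier on the curved domain} that can be compared against the arbitrary smooth test function $\phi$: one must insert the approximation error of Lemma~\ref{close}, the averaging error from Theorem~\ref{thm:planar}(iii) (which carries the extra $\omega_{\nu_0}(\e)^\beta$ term, harmless since it is independent of the other scales once $\e$ is sent to $0$ last), and the discrepancy between $\bar\mu(\nu_0)$ and the slopes $\mu(u_\e^{\nu})$ for the nearby normals $\nu$ actually realized on $\partial\Omega$, and check that the sum of all errors is $o$ of the gap $c$ coming from $\phi$. A secondary subtlety is that $\phi$ is only $C^1$ up to the boundary, so $D\phi$ is nearly constant $(=D\phi(x_0))$ only in a small neighborhood; one localizes to a ball of radius comparable to $\e^{2k/3}$ and absorbs the variation of $D\phi$ into the error budget. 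Once the Neumann inequality is verified, that $\bar u$ is a subsolution of the interior and Dirichlet parts is routine from Theorem~\ref{original_convergence} and the stability of viscosity solutions, and (b) is identical with inequalities reversed. \hfill$\Box$
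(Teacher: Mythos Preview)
Your overall strategy is the paper's: argue by contradiction with a smooth test function, pass to the flat strip via Lemma~\ref{close}, and invoke Theorem~\ref{continuity}. But there is a genuine gap at the step you treat most casually.

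You write that ``one may assume $\nu_0=\nu_{x_0}$ is irrational --- or approximate it by irrational directions using the continuity of $\bar\mu$.'' Neither option works as stated. The point $x_0$ is handed to you by the failure of the subsolution property; its normal may well be rational, and you cannot simply assume otherwise. As for ``approximating by irrational directions'': Theorem~\ref{continuity}(b) delivers the slope estimate only under the scale constraint $\e \le \delta\,|\nu_i-\nu|^{21/20}$, where $\nu$ is a fixed reference direction and $\nu_i$ the irrational direction actually used. If you work at $x_0$ (or in a window of size $\e^{5k/8}$ around it), every normal on $\partial\Omega$ there differs from $\nu_{x_0}$ by at most $O(\e^{5k/8})$, which is far too small for this constraint to hold with the given $\e$. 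Your remark that ``this is exactly what the exponents in Lemma~\ref{close} were tuned for'' is incorrect: those exponents handle the curvature error in flattening, not the scale compatibility with Theorem~\ref{continuity}.

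The paper closes this gap by a nontrivial perturbation of the test function. It replaces $\phi$ by $\phi^\e=\phi+g_\gamma+v_\gamma$ with $\gamma=\e^{\alpha_0}$, where $g_\gamma$ is an explicit $C^2$ bump supported in a cylinder of radius $2\gamma$ about $x_0$ whose normal derivative exceeds $2\ge\max g$ on $\partial\Omega\cap B_\gamma(x_0)$, and $v_\gamma$ is a Pucci corrector ensuring $\phi^\e$ remains a supersolution of the $\e$-equation; showing $v_\gamma\to 0$ uniformly uses $W^{2,n}$ bounds on $g_\gamma$ and the stability theory of $L^p$-viscosity solutions. The bump forces the contact point $x_\e$ of $\phi^\e-u^\e$ to lie \emph{outside} $B_{\e^{\alpha_0}}(x_0)$, and then the non-flatness assumption \eqref{condition22} (with $\alpha_0$ chosen for this purpose) yields $|\nu_{x_\e}-\nu_{x_0}|^{21/20}>\e^{1-k}$. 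Only now can one pick a nearby $p_\e\in\partial\Omega$ with irrational normal $\nu_\e$ and legitimately apply Theorem~\ref{continuity}(b) with reference direction $\nu_{x_0}$. This mechanism --- pushing the contact point a controlled distance away so the normal becomes quantitatively distinct from $\nu_{x_0}$ at a rate compatible with $\e$ --- is the missing idea in your outline, and without it the scale bookkeeping cannot be closed.
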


\medskip

\begin{proof}

We will only show (a): parallel arguments apply to (b).  The proof follows the perturbed test function method introduced by Evans \cite{E}, and consists of several round of perturbations of the test functions $\phi$. 

\vspace{10pt}

1.
It follows from previously known results (see e.g., \cite{CS},\cite{CSW}) that
$$
\bar{F}(D^2\bar{u})\leq 0\hbox{ in }\Omega.
$$ 
Moreover from comparison with linear functions it is
straightforward to show that $\bar{u} \leq 1$ on $K$.
 Therefore if $\bar{u}$ fails to be a subsolution of $(P)$, then there exists a
smooth function $\phi$ which touches $\bar{u}$ from above at
a boundary point $x_0\in\partial\Omega$ and satisfies, for some $\delta>0$,
\begin{equation}\label{condition}
\bar{F}(D^2\phi) (x_0)>2\delta\quad \hbox{ and }\quad
\frac{\partial{\phi}}{\partial\nu} (x_0) \geq \bar{\mu}(\nu_{x_0}) +
3\delta
\end{equation}
Due to the regularity of  $\phi$ and the continuity of $\bar{\mu}(\nu)$, there exists $r>0$ such that
\begin{equation*}
 \bar{F}(D^2\phi)>\delta \quad\hbox{ in } \Omega\cap B_r(x_0)\hbox{ and }\quad \frac{\partial{\phi}}{\partial\nu} \geq \bar{\mu}(\nu)+2\delta \quad\hbox{ in }\partial\Omega\cap B_r(x_0).
\end{equation*}

Let $\nu_0$ denote the outer normal of $\partial\Omega$ at $x_0$. By adding $\delta^2(x-x_0)\cdot\nu_0 + \delta(x-x_0)^2 -M((x-x_0)\cdot\nu_0)^2$ to $\phi(x)$ and restricting the domain to $B_{r}(x_0)$ with $r\leq \delta$, we may assume that $\phi>\bar{u}$ except at $x=x_0$ in $\bar{\Omega}\cap B_r(x_0)$ and
\begin{equation}\label{condition2}
F(D^2\phi, \frac{x}{\e}) > 0 \hbox{ in } \Omega\cap B_{r}(x_0) \hbox{ and }\quad \frac{\partial{\phi}}{\partial\nu} \geq \bar{\mu}(\nu)+2\delta \hbox{ in }\partial\Omega\cap B_r(x_0).\end{equation}

\vspace{10pt}

2. Let us  choose $\alpha_0$ sufficiently small such that 

\begin{equation}\label{away}
 |\nu_x-\nu_{x_0}|^{21/20} >\e^{1-k} \hbox{ for } x\in (B_r(x_0)-B_{\e^{\alpha_0}}(x_0))\cap\partial\Omega,
\end{equation}
where $0<k<1$ is the constant given in Lemma~\ref{close}.
This is possible due to our assumption \eqref{condition}, i.e., since $\partial\Omega$ does not have any flat boundary parts.  We now perturb $\phi$ and consider $\phi^\e$ such that 

\begin{itemize}
\item[(a)] $\phi^\e$ uniformly converges to  $\phi$ as $\e\to 0$ in $\bar{\Omega}\cap B_r(x_0)$;
\item[(b)] $F(D^2\phi^\e, \frac{x}{\e})>0$  in  $B_r(x_0)$;
\item[(c)] $\partial_{\nu}\phi^\e \geq \partial_{\nu}\phi - C\e^{\alpha_0}$ on $\partial\Omega\cap B_r(x_0)$,  and  $\partial_{\nu} \phi^\e \geq 2$ on $\partial\Omega\cap B_{\e^{\alpha_0}}(x_0)$.
\end{itemize}

Note that by (b)-(c) $\phi^\e-u^\e$ can have its minimum at $y$ in $B_r(x_0)$ only if $y$ is more than $\e^{\alpha_0}$-away from $x_0$.  This step is necessary to exclude the possibility that $\nu_{x_0}$ is rational and the irrational normal direction near the minimum of $\phi^\e-u^\e$ is too close to $\nu_{x_0}$ compared to the size of $\e$ to observe the averaging behavior.

\medskip

 For convenience let $x_0=0$, $\nu_{x_0}=e_n$, and set $\gamma:=\e^{\alpha_0}$. We will construct $\phi^\e$ as $\phi + g_{\gamma}+ v_{\gamma}$, where $g_{\gamma}$ and $v_{\gamma}$ will be constructed below.
Let us first explicitly $g=g_{\gamma}$, which satisfies 
$g\in C^2 (\bar{\Omega})$, $g_{\gamma}\to 0$ as $\gamma\to 0$ in $\Omega$, and 
\begin{equation}\label{check}
\partial_\nu g\geq  -C\gamma^2 \hbox{ on } \partial\Omega, \quad \partial_{\nu} g \geq 2 \hbox{ in } B_{\gamma}(x_0)\cap \partial\Omega.
\end{equation}
Let us define $\Sigma_{\gamma}:=\{|x'|\leq 2\gamma\}\times \{-\gamma\leq x_n\leq \gamma\}$, and let us define $g$ by $g=0$ in $\Omega-\Sigma_\gamma$, and
$$
g(x',x_n) =  \frac{1}{\gamma^2}\phi(\frac{|x'|^2}{\gamma^2}) (x_n+\gamma)^3 \hbox{ in } \Sigma_{\gamma},
$$
where $\phi(r):\R\to [0,1]$ is a $C^2$ function which has support $[-2,2]$, $\phi(r)=1$ for $r\in (-1,1)$  and $\phi'(0)=0$. 

\medskip

Note that, since $\partial\Omega$ is $C^2$,  $\partial\Omega \cap\Sigma_{\gamma} \subset \{ |x_n| \leq C\gamma^2\}$: in particular, for small $\gamma$, $\Omega\cap\{x_n \geq \gamma\}$ is empty in $\Omega\cap\{|x'| \leq \gamma\}$. It follows that $g$ is in $C^2(\bar{\Omega})$.  Clearly  $g$ goes to zero in $\bar{\Omega}$ as $\gamma\to 0$, and so it remains to check \eqref{check}.
Straightforward computations yield
$$
\begin{array}{lll}
\partial_{\nu} g (x)\geq \frac{3}{\gamma^2}(x_n+\gamma)^2\phi(\frac{|x'|^2}{\gamma^2})(1-C\gamma) -  C\gamma(\frac{1}{\gamma^2}\|\phi' \|_{\infty}(\frac{2|x'|}{\gamma^2} (x_n+\gamma)^3) &\geq& 2\quad \hbox{ on } \partial\Omega\cap\{|x|\leq \gamma\}\\ \\
&\geq & -C\gamma \hbox{ in } \partial\Omega,
\end{array}
$$
from which \eqref{check} follows.

Next let $v=v_{\gamma}$ be the viscosity solution of 
$$
\left\{\begin{array}{lll}
-\mathcal{P}^+(D^2 v)  =f_\gamma:=- \mathcal{P}^+(D^2 g_{\gamma}) &\hbox{ in }& \Omega\cap B_r(x_0); \\ \\
 \partial_\nu v=0 &\hbox{ on }& \partial\Omega\cap B_r(x_0);\\ \\
  v=g_{\gamma}=0 &\hbox{ on }& \partial B_r(x_0)\cap\Omega.
  \end{array}\right.
$$

To ensure that the remaining term $v$ in the construction of $\phi^\e$ would vanish as $\e\to 0$, we estimate the $L^p$ norm of $D^2g$. Observe that, from the definition of $g$, it follows that
$$
|g_{ij}| \leq \frac{8}{\gamma}\hbox{ for } i,j=1,...,n.
$$
Hence 
\begin{equation}\label{claim}
\|g\|^p_{W^{2,p}(\Omega)} \leq (C\gamma^{-p})\gamma^n =C\gamma^{n-p},
\end{equation}
and  in particular if $p=n$,

\begin{equation}\label{claim1}
\|g\|_{W^{2,n}(\Omega)}  \leq C.
\end{equation}

Note that, due to \eqref{claim1}, $\|f_{\gamma}\|_{L^n}$ is uniformly bounded. Therefore \cite{MS} yields that  $\{v_{\gamma}\}_{\gamma>0}$ are uniformly H\"{o}lder continuous in $B_r(x_0)\cap\bar{\Omega}$ (Proposition 2.1 and Proposition 4.2 in \cite{MS}). Moreover, due to \eqref{claim},  $\|f_{\gamma}\|_{L^p}$ vanishes as $\gamma\to 0$ for $p<n$. Due to this fact and the equi-continuity of $v_{\gamma}$ one can make use of the stability of $L^p$-viscosity solutions for $n>p>n-\e_0$ for $\e_0=(n,\lambda,\Lambda)$ (see \cite{CCKS}) and employ a compactness argument as in Lemma 2.3 of \cite{S} to show that $v_{\gamma}$ uniformly converges to zero in $\bar{\Omega}\cap B_r(x_0)$ as $\gamma\to 0$. 

\medskip

Due to above properties of $g$ and $v$, $\phi^\e: = \phi+g_\sigma+v_\sigma$ with $\sigma=\e^{\alpha_0}$ satisfies (a)-(c): to check (b), note that 
$$
F(D^2\phi^\e, \frac{x}{\e})  \geq F(D^2\phi,\frac{x}{\e}) - \mathcal{P}^-(D^2g+ D^2 v) \geq F(D^2 \phi,\frac{x}{\e}) - \mathcal{P}^+(D^2g) - \mathcal{P}^+(D^2 v) >0.
$$

\medskip

4. Due to the definition of $\bar{u}$ and (a),  for $r>0$ as given above we have  $\phi^\e>u^\e$ on $\partial B_r(x_0)$ if $\e$ is sufficiently small, along a subsequence of $\e$. Moreover, $\phi^\e-u^\e$ has a local minimum at $x_\e\in B_r(x_0)\cap\bar{\Omega}$ with $x_\e\to x_0$ as $\e\to 0$: we may add a constant to $\phi^\e$ to assume that $\phi^\e(x_\e)=u^\e(x_\e)$.  Let us denote $\eta_\e = \nu_{x_\e}$. Note that $x_\e$ lies outside of $B_{\e^{\alpha_0}}(x_0)$ due to the construction of $\phi^\e$, and due to the comparison principle applied to $\phi$ and $u$,
$x_\e \in \partial\Omega \cap B_r(x_0)$.

\vspace{10pt}

5. This step is to extract only the normal component of $\phi^\e$ at $x=x_\e$ and make a new function $\varphi$ with it,
so that we do not have to worry about the tangential derivative of $\phi^\e$ at $x_\e$. We also adjust $u_\e$ accordingly. Let us decompose $\phi^\e$ into $\phi ^\e= \phi_1+\phi_2$ where
$$
\phi_1(x) = (x-x_\e)\cdot(D\phi^\e - \eta_\e (\eta_\e\cdot D\phi^\e))(x_\e).
$$
Then
\begin{equation}\label{perturb}
\eta_\e\cdot D\phi_1(x_\e) = 0, \quad D\phi_2(x_\e) = \eta_\e(\eta_\e\cdot D\phi_2)(x_\e), \quad\hbox{ and } \phi_2(x_\e)=\phi^\e(x_\e).
\end{equation}
Observe that, due to \eqref{perturb} and the fact that $\phi_1$ is a linear function, $\phi_2$ still satisfies \eqref{condition} instead of $\phi$.
Furthermore, since $\phi_2$ is smooth, we may choose $\e$
sufficiently small to replace $\phi_2$  by a linear profile
 $$
 \varphi(x):= \phi_2(x_\e) + D\phi_2(x_\e)\cdot(x-x_\e)
 $$
 such that
$$
\phi_2(x) \leq \varphi(x)+C\e^{5k/4} \hbox{ in }
B_{\e^{5k/8}}(x_\e).
$$
where $C$ depends on the $C^2$-norm of $\phi_2$.

\vspace{10pt}

Next we will tilt $u_\e$ accordingly so that we can use $\varphi$
instead of $\phi^\e$ as the test function. Let us define
$$\tilde{u}_\e(x)=u_\e(x)-\phi_1(x)-C\e^{5k/4}.$$ Then $\tilde{u}_\e$
satisfies
$$
\left\{\begin{array}{lll}
F(D^2\tilde{u}_\e, x/\e)=0 &\hbox{ in }& \Omega \\ \\
\tilde{u}_\e \leq \varphi &\hbox{ in }&\Omega\cap B_{\e^{5k/8}}(x_\e)\\ \\
|D\tilde{u}_\e \cdot \nu_x - g(\frac{x}{\e})| \leq C\e^{5k/4}
&\hbox{ on }& \partial\Omega\cap B_{\e^{5k/8}}(x_\e).
\end{array}\right.
$$

Note that
\begin{equation}\label{almost_contact}
\tilde{u}_\e(x_\e) = \varphi(x_\e)-C\e^{5k/4}.
\end{equation}
\vspace{10pt}

6. Next we will approximate $\Omega$ with a strip domain with the help of Lemma~\ref{close}. For $\e$ and $x_\e$ as given above, let us pick $p_\e\in\partial\Omega$ such that
\begin{itemize}
\item[(a)] $|p_\e-x_\e| \leq \e$,\\
\item[(b)] At $p_\e$,  $\partial\Omega$ is normal to $\nu_\e$: an irrational vector.
\end{itemize}

Let $v_\e$ solve the problem
$$
\left\{\begin{array}{lll}
F(D^2 v_\e,x/\e)=0 &\hbox{ in }& \Sigma_\e:=\{-\e^k \leq (x-p_\e)\cdot\nu_\e \leq 0\}\\ \\
\frac{\partial v_\e}{\partial\nu} =g(\frac{x}{\e}) +C\e^{5k/4} &\hbox{ on }& \{(x-p_\e)\cdot\nu_\e=0\}\\ \\
v_\e=\varphi &\hbox{ on }& \{(x-p_\e)\cdot\nu_\e = -\e^k\}.
\end{array}\right.
$$

Due to Lemma~\ref{close}, we have $|\omega_\e-v_\e| \leq \e^{k+a}$ in $\Omega\cap\Sigma_\e\cap B_{\e^{2k/3}}(x^\e)$, where $\omega_\e$ solves
$$
\left\{\begin{array}{lll}
F(D^2\omega_\e, \frac{x}{\e}) = 0 &\hbox{ in }& \Omega\cap\{-\e^k \leq (x-x_\e)\cdot \nu_\e\}\cap B_{\e^{5k/8}}(x_\e),\\ \\
\frac{\partial \omega_\e}{\partial\nu} =g(\frac{x}{\e}) +C\e^{5k/4} &\hbox{ on }& \partial\Omega \cap \{-\e^k \leq (x-p_\e)\cdot\nu_\e\}\cap B_{\e^{5k/8}}(x_\e),\\ \\
\omega_\e=\varphi &\hbox{ on }& \{ (x-p_\e)\cdot\nu_\e=-\e^k\} \cup \partial B_{\e^{5k/8}}(x_\e).
\end{array}\right.
$$

Now comparison principle applied to $\tilde{u}^\e$ and $\omega_\e$ in $\{ -\e^k \leq (x-x_\e)\cdot\nu\} \cap\Omega\cap B_{\e^{5k/8}}(x_\e)$ yields that 

$$
\tilde{u}_\e \leq w_\e \leq v_\e + \e^{k+a} \hbox{ in }
\Omega\cap\Sigma_\e \cap B_{\e^{2k/3}}(x_\e).
$$
Hence we have
\begin{equation}\label{difference}
\tilde{u}_\e(x_\e) \leq  v_\e(x_\e) + \e^{k+a}.
\end{equation}
Combining \eqref{almost_contact} and \eqref{difference},
\begin{equation} \label{fin}
\varphi(x_\e) \leq v_\e(x_\e) + \e^{k+a} + C \e^{5k/4}.
\end{equation}

7. Lastly we will use Theorem~\ref{continuity} to derive a contradiction to \eqref{fin}. Let us choose $y_\e\in \e\Z^n, |x_\e-y_\e| \leq \e$ and define
$$
\tilde{v}_\e(x):= \e^{-k} v_\e(\e^k (x+y_\e))
$$
so that $\tilde{v}_\e$ solves $(P_{\e^{1-k}}^{\nu_\e})$ in $\Pi_{\nu^\e}(z_\e)$ with $|z_\e| \leq \e^{1-k}$.
Due to \eqref{away} and 
Theorem~\ref{continuity}, we can choose $\e$ is sufficiently small
so that
$$
|\mu(\tilde{v}_\e)-\bar{\mu}(\nu_{x_0}) | <\frac{\delta}{2}.
$$

Then due to \eqref{condition2}, the comparison principle  applies  to $\varphi (x+y_\e)-\frac{\delta}{2}((x-z_\e)\cdot\nu_\e+1)$ and $\tilde{v}_\e$ to yield that
$$
\tilde{v}_\e (0) \leq \e^{-k}\varphi (y_\e) -\frac{\delta}{2}.
$$
After scaling back to $v_\e$, we get
$$
v_\e(y_\e) \leq \varphi(y_\e) - \frac{\delta}{2} \e^k
$$
Since $|v_\e(x_\e)-v_\e(y_\e)| \leq C\e^{\alpha}$ for any $\alpha$ due to Theorem~\ref{thm:reg2}, we choose $\alpha>k$ to obtain
$$
v_\e(x_\e) \leq \varphi(x_\e) - \frac{\delta}{4}\e^k
$$ 
if $\e$ is sufficiently small: this yields a contradiction to \eqref{fin}, and we can conclude.

\end{proof}

\medskip

\textbf{ Proof of Theorem~\ref{main2}} 

Due to Proposition~\ref{prop:main} and the comparison principle (Theorem~\ref{thm:comp}), we have $\bar{u} \leq \underline{u}$. But due to definition $\bar{u} \geq \underline{u}$. Therefore we conclude that $\bar{u}=\underline{u}$: this is equivalent to the local uniform convergence of the sequence $u_\e$ to $u=\bar{u}=\underline{u}$, which is the unique viscosity solution of $(\bar{P})$.

\hfill$\Box$

\end{document}